\documentclass[draft,reqno]{amsproc}

\usepackage{amsthm}
\usepackage{amsfonts}
\usepackage{amsmath}
\usepackage{amssymb}
\usepackage{mathrsfs}
\usepackage{epsfig}
\usepackage{graphicx}
\usepackage{epstopdf}
\usepackage{color}
\usepackage{ifthen}
\usepackage{float}
\usepackage[active]{srcltx}

\makeatletter
\@namedef{subjclassname@2010}{%
\textup{2010} Mathematics Subject Classification}
\makeatother

\newtheorem{theorem}{Theorem}[section]
\newtheorem{proposition}[theorem]{Proposition}
\newtheorem{corollary}[theorem]{Corollary}
\newtheorem{lemma}[theorem]{Lemma}

\theoremstyle{remark}
\newtheorem{remark}[theorem]{Remark}

\theoremstyle{definition}
\newtheorem{definition}[theorem]{Definition}
\newtheorem{example}[theorem]{Example}

\newcommand{\bq}{\begin{equation}}
\newcommand{\eq}{\end{equation}}
\newcommand{\beqn}{\begin{eqnarray*}}
\newcommand{\eeqn}{\end{eqnarray*}}
\newcommand{\beq}{\begin{eqnarray}}
\newcommand{\eeq}{\end{eqnarray}}

\newcommand{\rar}{\rightarrow}

\newcommand{\bc}{\begin{centre}}
\newcommand{\ec}{\end{centre}}

\newcommand{\ba}{\begin{array}}
\newcommand{\ea}{\end{array}}

\newcommand{\inp}[2]{\langle{#1},\,{#2} \rangle}

\renewcommand{\Delta}{{\nabla}}
\def \N{\mathbb{Z}_+}

\newcommand*{\child}[1]{\mathsf{Chi}(#1)}
\newcommand*{\childn}[2]{{\mathsf{Chi}}^{\langle#1\rangle}(#2)}
\newcommand*{\parentn}[2]{{\mathsf{par}}^{\langle#1\rangle}(#2)}

\newcommand*{\lambdab}{\boldsymbol\lambda}

\newcommand*{\parent}[1]{\mathsf{par}(#1)}
\newcommand*{\rot}{\mathsf{\omega}}

\begin{document}
\title[Weighted Shifts on Directed Trees]
{An Analytic Model for Left-Invertible \\ Weighted Shifts on Directed Trees}
   \author[S. Chavan]{Sameer Chavan}
   \address{Department of Mathematics and Statistics\\
Indian Institute of Technology Kanpur, India}
   \email{chavan@iitk.ac.in}
   \author[S. Trivedi]{Shailesh Trivedi}
   \address{School of Mathematics \\ Harish-Chandra Research Institute\\ 
Chhatnag Road, Jhu-nsi, Allahabad 211019, India}
\email{shaileshtrivedi@hri.res.in}
   
   \subjclass[2010]{Primary 47B37, 47A10; Secondary 46E22, 47B38}
\keywords{weighted shift, directed
tree, multiplication operator, reproducing kernel of finite bandwidth, Hilbert space of holomorphic functions}

\date{}

\maketitle

\begin{abstract}
Let $\mathscr T$ be a rooted directed tree with finite branching index $k_{\mathscr T}$ and 
let $S_{\lambda} \in B(l^2(V))$ be a left-invertible
weighted shift on ${\mathscr T}$. 
We show that
$S_{\lambda}$ can be modelled as a multiplication operator $\mathscr M_z$ on a reproducing kernel Hilbert space $\mathscr H$ of $E$-valued holomorphic functions on a disc centered at the origin, where $E:=\ker S^*_{\lambda}$.
The reproducing kernel associated with $\mathscr H$ is multi-diagonal and of bandwidth $k_{\mathscr T}.$
Moreover, $\mathscr H$ admits an orthonormal basis consisting of polynomials in $z$ with at most $k_{\mathscr T}+1$ non-zero coefficients.
As one of the applications of this model, we give
a complete spectral picture of $S_{\lambda}.$ 
Unlike the case $\dim E = 1,$ the approximate point spectrum of $S_{\lambda}$ could be disconnected. We also obtain an analytic model for left-invertible weighted shifts on rootless directed trees with finite branching index.  

\end{abstract}



\section{Preliminaries} 

The implementation of methods of graph theory into operator theory gives rise to a new class of operators known as {\it weighted shifts on directed trees}. These operators are generalization of adjacency operators of the directed trees. Although, the study of adjacency operators of the directed graphs was initiated by Fujii, Sasaoka and Watatani in \cite{F-S-W}, it was first observed by Jab{\l}o\'nski, Jung and Stochel in \cite{Jablonski} that replacing the directed graphs by directed trees not just gives a successful theory of weighted shifts but also provides a rich source of examples and counter-examples in operator theory \cite{JBS-1}, \cite{JBS-2}. 
Several questions related to boundedness, adjoints, normality, subnormality, hyponormality etc. of weighted shifts on directed trees have been studied in depth in \cite{Jablonski}. 

In the present paper, we discuss 
a rich interplay between the discrete structures (directed trees) and analytic structures (analytic kernels of finite bandwidth).
The starting point of this text is the observation that
any left-invertible weighted shift on a rooted directed tree can be realized as the operator of multiplication by the co-ordinate function on a reproducing kernel Hilbert space $\mathscr H$ of vector-valued holomorphic functions defined on a disc in the complex plane.
In case the directed tree has finite branching index, this analytic model takes a concrete form. In particular, the reproducing kernel associated with $\mathscr H$ turns out to be multi-diagonal. Also, the space $\mathscr H$ may not be obtained by tensoring a Hilbert space of scalar-valued holomorphic functions with another Hilbert space.
In this course, we arrive at a couple of interesting invariants, namely, branching index of a directed tree and radius of convergence for the weighted shift. Importantly, these invariants can be computed explicitly in various situations.

%
%
%
%
%
%
%
%
%

Let $\mathbb Z_+$, $\mathbb Z,$ $\mathbb R$ and $\mathbb C$ stand for
the sets of non-negative integers, integers, real numbers and complex
numbers, respectively. 
The complex conjugate of a complex number $w$ will be denoted by $\overline{w}.$
We use $\mathbb D_r$ 
to denote the open disc $\{z \in \mathbb C : |z| < r\}$ of radius $r > 0.$ 
In case $r=1,$ we denote the unit disc $\mathbb D_1$ by a simpler notation $\mathbb D.$  
For a subset $A$ of a non-empty set $X$, $\mbox{card}(A)$ denotes the cardinality of $A$.

Let $\mathcal H$ be a complex separable Hilbert space.
The inner-product  on $\mathcal H$ will be denoted by $\inp{\cdot}{\cdot}_{\mathcal H}$.
If no confusion is likely then we suppress the suffix, and simply write the inner-product as $\inp{\cdot}{\cdot}$.  
By a {\it subspace}, we mean a closed linear manifold.
Let $W$ be a subset of $\mathcal H.$ Then $\mbox{span}\,W$ stands for the smallest linear manifold generated by $W.$
In case $W$ is singleton $\{w\},$ we use the convenient notation $\langle w \rangle$ in place of  $\mbox{span}\,\{w\}$.
By $\bigvee \{w : w \in W\},$ 
we understand the subspace generated by $W$.
For a subspace $\mathcal M$ of $\mathcal H,$ we use $P_{\mathcal M}$ to denote the orthogonal
projection of $\mathcal H$ onto $\mathcal M.$ For vectors $x, y \in \mathcal H,$ we use the notation $x \otimes y$ to denote the rank one operator given by
 $$x \otimes y (h) = \inp{h}{y}x,~h \in \mathcal H.$$ 

Unless stated otherwise, all the Hilbert spaces occurring below are complex 
infinite-dimensional separable and for any such Hilbert space 
$\mathcal H$, ${B}({\mathcal H})$ denotes the Banach algebra of bounded linear operators on $\mathcal H.$ 
For $T \in B(\mathcal H),$ the symbols $\ker T$ and $\mbox{ran}\,T$ will stand for the kernel and 
the range of $T$ respectively.
The Hilbert space adjoint of $T$ will be denoted by $T^*.$
In what follows, we denote the spectrum, approximate point spectrum, essential spectrum and the point spectrum
of $T$ by ${\sigma(T)}$, ${\sigma_{ap}(T)}$, ${\sigma_e(T)}$ and 
${\sigma_p(T)}$ 
respectively. 
We reserve the notation $r(T)$ for the spectral radius of $T.$

Let $T \in B(\mathcal{H})$.
We say that $T$ is {\it left-invertible} if there exists $S \in B(\mathcal{H})$ such that $ST=I.$ Note that $T$ is left-invertible if and only if
there exists a constant $\alpha > 0$ such that $T^*T \geq \alpha I.$ In this case, $T^*T$ is invertible and $T$ admits the left-inverse $(T^*T)^{-1}T^*$.
Following \cite{Shimorin}, we refer to the operator $T'$ given by $T':=T(T^*T)^{-1}$ as the {\it Cauchy dual} of the left-invertible operator $T.$
Further, we say that $T$ is {\it analytic} if $\bigcap_{n \geq 0}T^n(\mathcal H)=\{0\}$. If $\mathscr H$ is a reproducing kernel Hilbert space of holomorphic functions defined on a disc in $\mathbb C$, then the multiplication operator $\mathscr M_z$ defined on $\mathscr H$ provides an example of an analytic operator. It is interesting to note that 
almost all analytic operators arise in this way.
Indeed, a result of S. Shimorin \cite{Shimorin} asserts that any left-invertible analytic operator is unitarily equivalent to the operator of multiplication by $z$ on a reproducing kernel Hilbert space of vector-valued holomorphic functions defined on a disc.
Since the proof of this fact, as given in \cite[Sections 1 and 2]{Shimorin}, plays a major role in the proof of the main result, we outline it in the following discussion
(cf. \cite[Theorem 2.13]{SV}). 

Let $T \in B(\mathcal{H})$ be a left-invertible analytic operator
and
let $E:=\ker T^*$. For each $x \in \mathcal H$, define an $E$-valued holomorphic function $U_x$ as 
$$U_x(z)=\displaystyle \sum_{n \geq 0}(P_ET'^{*n}x)z^n,$$
where $T'$ is the {Cauchy dual} of $T$. 
A simple application of the spectral radius formula \cite{Conway} shows that the function $U_x(z)=P_E(I-zT'^*)^{-1}x$ is well-defined and holomorphic on the disc $\mathbb D_{r}$, where $r:=\frac{1}{r(T')}.$ Let $\mathscr H$ denote the vector space of $E$-valued holomorphic functions of the form $U_x$, $x \in \mathcal H$. Consider
the map $U:\mathcal H \rar \mathscr H$ defined by $Ux = U_x$. By \cite[Lemma 2.2]{Shimorin}, the kernel of $U$ is precisely
$\bigcap_{n \geq 0}T^n(\mathcal H)$, and hence by the assumption, $U$
is injective.
In particular, we may equip the space $\mathscr H$ with the norm induced from $\mathcal H$, so that $U$ is unitary. It turns out that $\mathscr H$ is a $z$-invariant reproducing kernel Hilbert space with $UT=\mathscr M_zU$, where $\mathscr M_z$ is the operator of multiplication by $z$. Also, the reproducing kernel $\kappa_{\mathscr H} :\mathbb D_{r} \times \mathbb D_{r} \rar {B}(E)$ is given by
\beq \label{rk} \kappa_{\mathscr H}(z,w)=\displaystyle\sum_{j,k\geq0}P_ET'^{*j}T'^k|_Ez^j\overline{w}^k, \eeq
which satisfies the following:
\begin{itemize}
\item[(i)] for any $x \in E$ and $\lambda \in \mathbb D_{r},$
$$\kappa_{\mathscr H}(\cdot,\lambda)x \in \mathscr H;$$
\item[(ii)] for any $x \in E$, $h \in \mathscr H$ and $\lambda \in \mathbb D_{r},$
$$\langle h(\lambda),x \rangle_E=\langle h,\kappa_{\mathscr H}(\cdot,\lambda)x \rangle_{\mathscr H}.$$
\end{itemize}
Conditions (i) and (ii) may be rephrased by saying that the set of bounded point evaluations (for short, bpe) for $\mathscr H$ contains the disc $\mathbb D_r.$
We see in the context of weighted shifts on rooted directed trees that indeed (analytic) bpe contains the disc $\mathbb D_{r_{\lambda}}$ of larger radius $r_{\lambda}$ (see Definition \ref{defwn}). This occupies the major part of the proof of the main result. 

In the remaining part of this section, we invoke some basic concepts from the theory of directed trees which will be frequently used in the rest of this paper. The reader is referred to \cite{Jablonski} for a detailed exposition on directed trees.

A pair $\mathscr T= (V,\mathcal E)$ is called a {\it directed graph} if $V$ is a non-empty set and $\mathcal E$ is a subset of $V \times V \setminus \{(v,v): v \in V\}$. An element of $V$ (resp. $\mathcal E$) is called a {\it vertex} (resp. an {\it edge}) of $\mathscr T$.  A finite sequence $\{v_i\}_{i=1}^n$ of distinct vertices is said to be a {\it circuit} of $\mathscr T$ if $n \geq 2$, $(v_i,v_{i+1}) \in \mathcal E$ for all $1 \leq i \leq n-1$ and $(v_n,v_1) \in \mathcal E$. A directed graph $\mathscr T$ is said to be {\it connected} if for any two distinct vertices $u$ and $v$ of $\mathscr T$, there exists a finite sequence $\{v_i\}_{i=1}^n$ of vertices of $\mathscr T$ $(n \geq 2)$ such that $u=v_1$, $v_n=v$ and $(v_i,v_{i+1})$ or $(v_{i+1},v_i) \in \mathcal E$ for all $1 \leq i \leq n-1$. For a
subset $W$ of $V$, define $\child{W} = \bigcup_{u\in W} \{v\in V
\colon (u,v) \in \mathcal E\}.$ 
One may define inductively $\childn{n}{W}$ for 
$n\in \N$ as follows: Set
$\childn{n}{W}=W$ if $n=0$, and
$\childn{n}{W}=\child{\childn{n-1}{W}}$ if $n\geqslant
1$.
Given $v\in V$, we write $\child{v}:=\child{\{v\}}$,
$\childn{n}{v}=\childn{n}{\{v\}}$. A member of $\child{v}$ is called a {\it child} of $v.$ For a given vertex $v\in V$, if there exists a unique vertex
$u \in V$ such that $(u,v)\in \mathcal E$, we say that $v$ has a {\em parent} $u$ and denote it by $\parent{v}$. A vertex $v$ of $\mathscr T$ is called a {\it root} of $\mathscr T$, or $v \in \mathsf{Root}(\mathscr T)$, if there is no vertex $u$ of $\mathscr T$ such that $(u,v)$ is an edge of $\mathscr T$. If $\mathsf{Root}(\mathscr T)$ is a singleton then its unique element is denoted  by $\mathsf{root}$. We set $V^\circ:=V \setminus \mathsf{Root}(\mathscr T)$.

A directed graph $\mathscr T= (V,\mathcal E)$ is called a {\it directed tree} if 
\begin{itemize}
\item[(i)] $\mathscr T$ has no circuits,
\item[(ii)] $\mathscr T$ is connected and
\item[(iii)] each vertex $v \in V^\circ$ has a parent.
\end{itemize}
\begin{remark}
Any directed tree has at most one root \cite[Proposition 2.1.1]{Jablonski}. 
\end{remark}
A directed tree $\mathscr T$ is said to be
\begin{enumerate}
\item[(i)] {\it rooted} if it has a (unique) root.
\item[(ii)] {\it rootless} if it has no root.
\item[(iii)]
{\it locally finite} if $\mbox{card}(\child u)$ is finite for all $u \in V.$ 
\item[(iv)]
{\it leafless} if every vertex has at least one child.
\end{enumerate}
 



In what follows, $l^2(V)$ stands for the Hilbert
space of square summable complex functions on $V$
equipped with the standard inner product. Note that
the set $\{e_u\}_{u\in V}$ is an
orthonormal basis of $l^2(V)$, where $e_u \in l^2(V)$
is the indicator function $\chi_{\{u\}}$ of $\{u\}$. Given a system
$\lambdab = \{\lambda_v\}_{v\in V^{\circ}}$ of non-negative real numbers, 
we define the {\em weighted shift operator} $S_{\lambda}$ on ${\mathscr T}$
with weights $\lambdab$ by
   \begin{align*}
   \begin{aligned}
{\mathscr D}(S_{\lambda}) & := \{f \in l^2(V) \colon
\varLambda_{\mathscr T} f \in l^2(V)\},
   \\
S_{\lambda} f & := \varLambda_{\mathscr T} f, \quad f \in {\mathscr
D}(S_{\lambda}),
   \end{aligned}
   \end{align*}
where $\varLambda_{\mathscr T}$ is the mapping defined on
complex functions $f$ on $V$ by
   \begin{align*}
(\varLambda_{\mathscr T} f) (v) :=
   \begin{cases}
\lambda_v \cdot f\big(\parent v\big) & \text{if } v\in
V^\circ,
   \\
   0 & \text{if } v \text{ is a root of } {\mathscr T}.
   \end{cases}
   \end{align*}
Unless stated otherwise, $\{\lambda_v\}_{v\in V^{\circ}}$ consists of positive numbers and $S_{\lambda}$ belongs to $B(l^2(V)).$ It may be concluded from \cite[Proposition 3.1.7]{Jablonski} that $S_\lambda$ is an injective weighted shift on $\mathscr T$ if and only if $\mathscr T$ is leafless. 
In what follows, we always assume that all the directed trees considered in this text are countably infinite and leafless. 

In the proof of the main result, we frequently use the following elementary facts pertaining to the weighted shifts on directed trees.
\begin{lemma} \label{facts} 
If $S_{\lambda} \in B(l^2(V)),$
then
for any $u \in V$ and positive integer $k,$
\begin{enumerate}
\item[(i)] 
$  S^k_{\lambda}e_{u} = \displaystyle \sum_{v \in \childn{k}{u}} \lambda_v \lambda_{\parent v}\cdots \lambda_{\parentn{k-1}{v}} e_v$ and \\
$ \|S^k_{\lambda}e_{u}\|^2 = \displaystyle \sum_{v \in \childn{k}{u}} \big(\lambda_v \lambda_{\parent v}\cdots \lambda_{\parentn{k-1}{v}} \big)^2.$
\item[(ii)] 
$\displaystyle S^{*k}_{\lambda}e_{u} =  \lambda_u \lambda_{\parent u}\cdots \lambda_{\parentn{k-1}{u}} e_{\parentn{k}{u}}$ and \\ 
$\displaystyle \|S^{*k}_{\lambda}e_{u}\|^2 =  \big(\lambda_u \lambda_{\parent u}\cdots \lambda_{\parentn{k-1}{u}} \big)^2,$
where $e_{{\mathsf{par}}^{\langle 
n \rangle}(v)}$ is understood to be the zero vector  in case ${\mathsf{par}}^{\langle 
n \rangle}(v)=\emptyset.$ 
\item[(iii)]
$S^{*k}_{\lambda}S^k_{\lambda}e_u = \|S^k_{\lambda}e_u\|^2e_u$.
\end{enumerate} 
\end{lemma}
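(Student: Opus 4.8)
The plan is to prove Lemma \ref{facts} by a direct computation starting from the defining action of $S_\lambda$ on basis vectors and iterating, together with a careful bookkeeping of which vertices survive under $\childn{k}{\cdot}$ and $\parentn{k}{\cdot}$. For part (i), I would first record the base case $S_\lambda e_u = \sum_{v \in \child{u}} \lambda_v e_v$, which is immediate from the definition of $\varLambda_{\mathscr T}$ since $(\varLambda_{\mathscr T} e_u)(v) = \lambda_v e_u(\parent v)$ is nonzero exactly when $\parent v = u$, i.e. when $v \in \child u$. Then I would induct on $k$: applying $S_\lambda$ to $S^{k-1}_\lambda e_u = \sum_{w \in \childn{k-1}{u}} c_w e_w$ (with $c_w$ the product of weights along the path from $u$ to $w$) and using linearity plus the base case gives $\sum_{w} c_w \sum_{v \in \child w} \lambda_v e_v$; since the directed-tree structure ensures each $v \in \childn{k}{u}$ has a unique ancestor chain, the double sum collapses to a single sum over $\childn{k}{u}$ with coefficient $\lambda_v \lambda_{\parent v} \cdots \lambda_{\parentn{k-1}{v}}$. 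The norm formula then follows because $\{e_v\}$ is orthonormal and the vertices in $\childn{k}{u}$ are distinct.

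For part (ii), I would compute $S^*_\lambda$ on basis vectors first: $\langle S^*_\lambda e_u, e_w \rangle = \langle e_u, S_\lambda e_w \rangle = \langle e_u, \sum_{v \in \child w} \lambda_v e_v \rangle$, which is $\lambda_u$ if $w = \parent u$ and $0$ otherwise (the sum being empty or not hitting $e_u$); hence $S^*_\lambda e_u = \lambda_u e_{\parent u}$, interpreted as $0$ when $u$ is the root. Iterating this $k$ times peels off one ancestor at a time, picking up the weight $\lambda_{\parentn{j}{u}}$ at the $j$-th step, and terminates at $e_{\parentn{k}{u}}$ (or at $0$ if the ancestor chain runs out before $k$ steps, which is exactly the convention stated in the lemma about $e_{\mathsf{par}^{\langle n\rangle}(v)}$ being zero). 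Again the norm identity is immediate by orthonormality.

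Part (iii) then follows by combining (i) and (ii): applying $S^{*k}_\lambda$ to $S^k_\lambda e_u = \sum_{v \in \childn{k}{u}} c_v e_v$ and using (ii) gives $\sum_v c_v \cdot c_v e_{\parentn{k}{v}}$, and since every $v \in \childn{k}{u}$ satisfies $\parentn{k}{v} = u$, this is $\big(\sum_v c_v^2\big) e_u = \|S^k_\lambda e_u\|^2 e_u$, where the last equality uses the norm formula from (i). Alternatively one observes directly that $S^{*k}_\lambda S^k_\lambda e_u$ must be a scalar multiple of $e_u$ (by general nonsense about weighted shifts preserving the grading) and computes the scalar as $\langle S^{*k}_\lambda S^k_\lambda e_u, e_u\rangle = \|S^k_\lambda e_u\|^2$.

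I do not expect any genuine obstacle here — this is an elementary lemma whose only subtlety is notational. The one point that requires care is the convention for when ancestor chains terminate: one must consistently treat $\parentn{n}{v}$ as empty and $e_\emptyset$ as the zero vector, and check that the telescoping product of weights in (ii) is interpreted as $0$ in that degenerate case, so that both the vector identity and the norm identity remain correct. A second minor point is justifying that the double sum in the induction for (i) really does collapse without overcounting: this is precisely where one invokes that $\mathscr T$ is a directed tree, so that the map $v \mapsto \parentn{k}{v}$ is well defined on $\childn{k}{u}$ and the sets $\child w$ for distinct $w \in \childn{k-1}{u}$ are disjoint. Everything else is routine manipulation with the orthonormal basis $\{e_v\}_{v \in V}$.
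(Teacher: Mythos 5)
Your proof is correct and follows essentially the same route as the paper, which simply cites \cite[Lemma 6.1.1]{Jablonski} for part (i) and notes that (ii) and (iii) follow by "straightforward mathematical induction" from the basic action of $S_\lambda$ and $S_\lambda^*$ on basis vectors — exactly the induction you carry out explicitly. Your added care about the disjointness of the sets $\child{w}$ and the $e_{\emptyset}=0$ convention is sound and fills in what the paper leaves to the reader.
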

\begin{proof}
The part (i) has been established in \cite[Lemma 6.1.1]{Jablonski}, whereas (ii) and (iii) can be obtained by a straightforward mathematical induction using \cite[Lemma 3.4.1(iii)]{Jablonski}.
\end{proof}

Let $S_\lambda$ be a left-invertible weighted shift on a rooted 
directed 
tree with weights 
$\{\lambda_v\}_{v\in V^{\circ}}$. It can be easily seen from (i) and (iii) above that the Cauchy dual $S'_{\lambda}$ of 
$S_{\lambda}$ is given by 
\[S'_{\lambda}e_u:=\displaystyle\sum_{v\in\mathsf{Chi}(u)}\frac{\lambda_v}{
\|S_\lambda 
e_{\mathsf{par}(v)}\|^2}e_v~ \text{for all}\ v\in V^{\circ}.\]
Note that $S'_{\lambda} \in B(l^2(V))$ is a weighted shift with weights 
$\{\lambda'_v\}_{v\in V^{\circ}},$ where 
\beq \label{weights}
\lambda'_v:=\frac{\lambda_v}{\|S_\lambda 
e_{\mathsf{par}(v)}\|^2}~ \text{for all}\ v\in V^{\circ}.
\eeq
This also shows that $\{\lambda'_v\}_{v\in V^{\circ}}$ is a bounded subset of positive real line.
Throughout this text, we find it convenient to use the notation $S_{\lambda'}$ in place of $S'_{\lambda}$.

It turns out that any weighted shift $S_{\lambda}$ on a rooted directed tree $\mathscr T$ 
is analytic (see Lemma \ref{lem1} below).
Hence by Shimorin's construction as described above, any left-invertible $S_{\lambda}$ admits an analytic model $(\mathscr M_z, \kappa_{\mathscr H}, \mathscr H)$. It turns out that this model can be significantly improved upon provided the underlying directed tree has  finite branching index (see Definition \ref{b-index}). In this case, the analytic model takes a concrete form with {\it multi-diagonal} kernel $\kappa_{\mathscr H}$ defined on a disc $\mathbb D_{r_{\lambda}}$, where $r_{\lambda}$
is a positive number such that $\frac{1}{r(S_{\lambda'})} \leq r_{\lambda} \leq r(S_{\lambda})$ (see \eqref{radius}). Moreover,
the reproducing kernel Hilbert space admits an orthonormal basis consisting of vector-valued analytic polynomials.
One of the interesting aspects of our model is a handy formula for $r_{\lambda}$ depending on $\mathscr T$ and $S_{\lambda}.$

Although the motivation for the present work comes mainly from the theory of weighted shifts on directed trees as expounded in \cite{Jablonski}, it is closely related to some of the recent developments in the function theoretic operator theory. In particular, the reader is referred to the study of analytic reproducing kernels of finite bandwidth carried out in a series of papers by G. Adams et al \cite{AM}, \cite{AM-1}, \cite{AFM}, \cite{AMSS} (refer also to \cite{Ar} for the general theory of reproducing kernels).  
It is also worth noting that the class of weighted shifts on rooted directed trees has some resemblance with the class of adjoints of abstract weighted shifts (in the context of complex Hilbert spaces) \cite{B-1}, \cite{B-2}, \cite{R-1} and also with the class of 
operator-valued weighted shifts \cite{L}, \cite{K}, \cite{Ja-0}, \cite{Si}
studied extensively in the literature. 
 
Here is the sketch of the paper. Section 2 is devoted to the statement of the main theorem and some of its immediate consequences. The proof of main theorem is presented in Section 3. In Section 4, we present several examples illustrating the rich interplay between the directed trees and reproducing kernels of finite bandwidth. 
In Section 5, we use the main theorem to describe various spectral parts of $S_{\lambda}.$ It turns out that weighted shifts on directed trees with disconnected approximate point spectra are in abundance. 
In the final section, we introduce a notion of branching index
for rootless directed trees and use it to obtain an analytic model for a left-invertible weighted shift $S_{\lambda}$ in this setting. 
It turns out that $S_{\lambda}$ is an extension of a weighted shift operator on a rooted directed tree.

\section{Main Result: Statement and Consequences}


Let $\mathscr T=(V, \mathcal E)$ be a rooted directed tree with root $\mathsf{root}$. Then
\beq \label{disjoint}
V = \bigsqcup_{n = 0}^{\infty} \childn{n}{\mathsf{root}}~(\mbox{disjoint union})
\eeq
(see \cite[Proposition 2.1.2]{Jablonski}).
For each $u\in V$, let $n_u$ denote the unique non-negative integer 
such that
$u \in \mathsf{Chi}^{\langle n_u\rangle}(\mathsf{root})$. We use the convention that 
$\mathsf{Chi}^{\langle{j}\rangle}(\mathsf{root})=\emptyset$ if $j < 0.$ Similar convention holds for $\mathsf{par}$.

The statement of the main theorem involves an invariant (to be referred to as the branching index) associated with a rooted directed tree. 

\begin{definition} \label{b-index}
Let $\mathscr T$ be a rooted directed tree  
and let
$$V_{\prec}:=\{u\in V: \mbox{card}(\mathsf{Chi}(u)) \geq2\}$$ be the set of branching 
vertices of $\mathscr T$. Define
\[k_\mathscr{T}:=\begin{cases}
 1+\sup\{n_w:w\in V_{\prec}\},& \text{if $V_{\prec}$ is non-empty}\\
 0,& \text{if $V_{\prec}$ is empty}.
\end{cases}
\]
We refer to $k_{\mathscr T} \in \mathbb Z_+ \cup \{\infty\}$ as the {\it branching index} of $\mathscr 
T.$ 
\end{definition}
\begin{remark}
If $\mbox{card}(V_{\prec})$ is finite then so is $k_{\mathscr T}$. On the other hand, directed trees $\mathscr T$ with infinite $\mbox{card}(V_{\prec})$ and finite $k_{\mathscr T}$ can be constructed easily. 
\end{remark}


The condition (i) in the following proposition says precisely that $\mathscr T$ is Fredholm (refer to \cite[Section 3.6]{Jablonski} for more details related to Fredholm directed trees). 

\begin{proposition} \label{probranch}
Let $S_\lambda \in B(l^2(V))$ be a weighted shift on a rooted directed tree $\mathscr T$ with root
$\mathsf{root}$. 
Let
$V_{\prec}$ be the set of branching 
vertices of $\mathscr T$ and let $k_{\mathscr T}$ be the branching index of $\mathscr T.$ 
Then the following statements are equivalent:
\begin{enumerate}
 \item [(i)] $\mathscr T$ is locally finite such that $\mbox{card}(V_{\prec})$ is finite. 
 \item [(ii)] $\mathscr T$ is locally finite such that $k_{\mathscr T}$ is finite.
 \item [(iii)] The dimension of $E:=\ker S_\lambda^*$ is finite.
\end{enumerate}
\end{proposition}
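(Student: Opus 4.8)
The plan is to establish the cycle of implications $(i) \Rightarrow (ii) \Rightarrow (iii) \Rightarrow (i)$, exploiting the explicit description of $E = \ker S_\lambda^*$ afforded by Lemma \ref{facts}. First, I would pin down $\ker S_\lambda^*$ concretely. Since $S_\lambda^* e_v = \lambda_v e_{\parent v}$ for $v \in V^\circ$ and $S_\lambda^* e_{\mathsf{root}} = 0$, a vector $f = \sum_{v} f(v) e_v$ lies in $\ker S_\lambda^*$ precisely when, for every $u \in V$, the coefficients of the children of $u$ satisfy $\sum_{v \in \child u} \lambda_v f(v) = 0$. Thus the ``fibre'' of $\ker S_\lambda^*$ over a vertex $u$ has dimension $\max\{\mathrm{card}(\child u) - 1, 0\}$ when $\child u$ is finite, and is infinite-dimensional when $\child u$ is infinite. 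Combining this over all $u \in V$ (using \eqref{disjoint} to organize $V$ by generations and the fact that $e_{\mathsf{root}}$ itself contributes one dimension), we get
\[
\dim E \;=\; 1 + \sum_{u \in V}\bigl(\mathrm{card}(\child u) - 1\bigr)
\]
with the usual conventions when a summand is $+\infty$; in particular $\dim E$ is finite if and only if $\mathscr T$ is locally finite and all but finitely many vertices have exactly one child, i.e. $\mathrm{card}(V_\prec)$ is finite. This single computation essentially yields $(i) \Leftrightarrow (iii)$.

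For $(i) \Rightarrow (ii)$: assuming $\mathscr T$ is locally finite with $\mathrm{card}(V_\prec)$ finite, I must show $k_{\mathscr T} = 1 + \sup\{n_w : w \in V_\prec\}$ is finite. Since $V_\prec$ is a finite set, $\{n_w : w \in V_\prec\}$ is a finite set of non-negative integers, so its supremum is finite; hence $k_{\mathscr T}$ is finite. (Local finiteness is carried along verbatim.) For $(ii) \Rightarrow (i)$: assume $\mathscr T$ is locally finite and $k_{\mathscr T} < \infty$. Local finiteness is again immediate, so the content is that $V_\prec$ is finite. Every branching vertex $w$ satisfies $n_w \le k_{\mathscr T} - 1$, so $V_\prec \subseteq \bigcup_{n=0}^{k_{\mathscr T}-1} \childn{n}{\mathsf{root}}$. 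Each $\childn{n}{\mathsf{root}}$ is finite: this follows by induction on $n$ from local finiteness, since $\childn{0}{\mathsf{root}} = \{\mathsf{root}\}$ is a singleton and $\childn{n+1}{\mathsf{root}} = \child{\childn{n}{\mathsf{root}}}$ is a finite union of finite sets. Therefore $V_\prec$ is contained in a finite union of finite sets, hence finite.

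The main obstacle — really the only place requiring care — is the first step: setting up the fibrewise description of $\ker S_\lambda^*$ and handling the bookkeeping when infinitely many vertices branch or when some $\child u$ is infinite, so that the identity for $\dim E$ (and its finiteness criterion) is rigorous in all cases. Once that is in hand, the remaining implications are purely combinatorial consequences of Definition \ref{b-index} together with the elementary fact that $\childn{n}{\mathsf{root}}$ is finite for each $n$ under local finiteness; no operator-theoretic input beyond Lemma \ref{facts} is needed. I would also remark, following the paper's comment, that $(i)$ is exactly the Fredholmness of $\mathscr T$ in the sense of \cite[Section 3.6]{Jablonski}, which gives an alternative route to $(i) \Leftrightarrow (iii)$ via $\dim \ker S_\lambda^* < \infty$ if one prefers to quote that theory.
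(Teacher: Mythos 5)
Your proposal is correct and follows essentially the same route as the paper: the equivalence of (i) and (iii) rests on the orthogonal decomposition $E=\langle e_{\mathsf{root}}\rangle \oplus \bigoplus_{v}\big(l^2(\child v)\ominus\langle\lambdab^v\rangle\big)$ (which the paper imports from \cite[Proposition 3.5.1(ii)]{Jablonski} rather than rederiving fibrewise as you do), and the combinatorial implications between (i) and (ii) are the same, with your (ii)$\Rightarrow$(i) argued directly via $V_{\prec}\subseteq\bigcup_{n=0}^{k_{\mathscr T}-1}\childn{n}{\mathsf{root}}$ instead of by contradiction.
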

\begin{proof}
That (i) implies (ii) is obvious. Suppose that (ii) holds. If $V_\prec$ is not finite, then that $k_\mathscr T$ is finite implies 
that there exists an infinite subset $W$ of $V_\prec$ such that $n_w$ is 
constant for all $w \in W$. Clearly, $W \subseteq \mathsf {Chi}^{\langle 
n_w \rangle} (\mathsf {root})$. Therefore, there exists a vertex $v \in V$ 
with $n_v < n_w$ such that $\mbox{card}(\mathsf{Chi}(v))$ is infinite. This contradicts the assumption that $\mathscr T$ is locally 
finite. Thus (ii) implies (i). 

By \cite[Proposition 
3.5.1(ii)]{Jablonski},   
\beqn 
E=\ker S_{\lambda}^*=\langle e_{\mathsf{root}}\rangle \oplus \bigoplus_{v \in 
V}\left(l^2(\mathsf{Chi}(v)) \ominus \langle \lambdab^v\rangle \right),
\eeqn
where
$\lambdab^v : \mathsf{Chi}(v) \rar \mathbb C$ is defined by 
$\lambdab^v(u)=\lambda_u,$ and $\langle f \rangle$ denotes the span of 
$\{f\}$. 
Observe now that $l^2(\mathsf{Chi}(v)) \ominus \langle\lambdab^v\rangle \neq \{0\}$ if and only if $v \in V_{\prec}$. Therefore, 
\beq \label{kernel} 
E=\langle e_{\mathsf{root}}\rangle \oplus \bigoplus_{v \in 
V_{\prec}}\left(l^2(\mathsf{Chi}(v)) \ominus \langle \lambdab^v\rangle \right).
\eeq
It now follows from \eqref{kernel} that $\dim E$ is finite if and only if $\mbox{card}(\child v)$ is finite for every $v \in V_{\prec}$ and $\mbox{card}(V_{\prec})$ is finite.
This gives the equivalence of (i) and (iii).
\end{proof}
\begin{remark}
It may happen that $k_{\mathscr T} < \infty$ and $\dim E = \infty$ (see Example \ref{tree-inf}).
\end{remark}


\begin{definition}\label{defwn}
Let $\mathscr T$ be a rooted directed tree with 
root $\mathsf{root}$ and let $k_{\mathscr T}$ be the branching index of $\mathscr T.$
For any integer $n$, consider the set 
\beqn 
W_n:= \bigcup_{j=n}^{k_{\mathscr T} + n} 
\mathsf{Chi}^{\langle{j}\rangle}(\mathsf{root}).
\eeqn
Let $S_\lambda \in B(l^2(V))$ be a left-invertible
weighted shift with weights $\{\lambda_v\}_{v\in V^{\circ}}$ 
and let $S_{\lambda'}$ be the Cauchy dual of $S_{\lambda}.$
The {\it radius of convergence} 
for $S_{\lambda}$ is defined as the non-negative number $r_{\lambda}$ given by
\beq \label{radius}
r_{\lambda}:=\liminf_{n \rar \infty} \left(\sum_{v \in W_n} \big(\lambda'_v 
\lambda'_{\mathsf{par}(v)} \cdots 
\lambda'_{{\mathsf{par}}^{\langle n-1 
\rangle}(v)}\big)^2 \right)^{-\frac{1}{2n}}.
\eeq
\end{definition}

We will see later that $r_{\lambda}$ is positive whenever $k_{\mathscr T}$ is finite (see Lemma \ref{r-lambda}). Let us compute $r_{\lambda}$ in the case in which $S_{\lambda}$ is a unilateral weighted shift.

\begin{example}[(Diagonal)] \label{diagonal} 
Consider the directed tree $\mathscr T_1$ with the set of vertices 
$V:=\mathbb{Z}_+$ and $\mathsf{root}=0$. We further require that $\mathsf{Chi}(n)=\{n+1\}$ for
all 
$n \geq 0$. For future reference, we note that 
$V_\prec=\emptyset$, and hence $k_{\mathscr
T_1}=0$.
The weighted shift $S_\lambda$ on the directed tree $\mathscr T_1$ (to be referred to as {\it unilateral weighted shift})
is given by 
\[S_\lambda e_n=\lambda_{n+1}e_{n+1}\ \text{for all}\ n\geq0.\]
(Caution: This differs from the standard definition $S_\lambda e_n=\lambda_{n}e_{n+1}\ \text{for all}\ n\geq0$ of the unilateral weighted shift.)
It is well-known that $S_{\lambda}$ is unitarily equivalent
to the operator ${\mathscr M}_z$ of multiplication by $z$ on the reproducing kernel Hilbert space $\mathscr H$ associated with the kernel 
\[\kappa_{\mathscr 
H}(z,w)=1 +\displaystyle\sum_{j\geq1}C_{j,j}z^j\overline{w}^j~(z, 
w \in \mathbb D_{r}),\]
where $r:=\liminf_{n \rar \infty} \left({\lambda_n} 
{\lambda_{n-1}} \cdots 
{\lambda_{1}}\right)^{\frac{1}{n}}$ and $\{C_{j, j}\}_{j \geq 0}$ is a sequence of positive numbers
(refer to \cite{S}).
Since $W_n=\{n\}$ for every $n \in \mathbb Z_+,$ the radius of convergence $r_{\lambda}$  for (a left-invertible) $S_{\lambda}$ is precisely
$r.$ 
Moreover, one can verify that (the rank one operator) 
$C_{j,j}$ is (multiplication by) $\frac{1}{\lambda^2_1 
\cdots\lambda^2_{j}}$
for all $j\geq1$. 
Clearly, the reproducing kernel $\kappa_{\mathscr H}(\cdot,\cdot)$ is
{\it diagonal} in this case.
\end{example}

 We are now in a position to state the main result of this paper.

\begin{theorem} \label{thm1}
Let $\mathscr T$ be a rooted directed tree with finite branching index $k_{\mathscr T}.$ 
Let $S_\lambda \in B(l^2(V))$ be a left-invertible weighted shift and let $S_{\lambda'}$ be the Cauchy
dual of $S_{\lambda}.$ 
Set $E:= \ker S^*_{\lambda}$.
Then there exist a $z$-invariant reproducing kernel Hilbert space 
$\mathscr H$ of $E$-valued holomorphic functions defined on the disc
$\mathbb{D}_{r_\lambda}$ and a unitary mapping $U:l^2(V)\longrightarrow\mathscr H$ such
that ${\mathscr M}_zU=US_\lambda,$ where ${\mathscr M}_z$ 
denotes the operator of multiplication by z on $\mathscr H$ and $r_{\lambda}$ is the radius of convergence for $S_{\lambda}$. Moreover, $r_{\lambda} r(S_{\lambda'}) \geq 1,$ where $r(S_{\lambda'})$ is the spectral radius of $S_{\lambda'}.$
Further, $U$ maps $E$ onto the subspace $\mathscr E$
of $E$-valued constant functions in $\mathscr H$ such that $Ug=g$ for every $g \in E.$ 
Furthermore,
we have the following:
\begin{itemize}
\item [(i)] The reproducing kernel $\kappa_{\mathscr H} : \mathbb D_{r_{\lambda}} 
\times \mathbb D_{r_{\lambda}} \rar B(E)$ associated with 
$\mathscr H$ satisfies $\kappa_{\mathscr H}(\cdot,w)g \in \mathscr H$ and
$
\inp{Uf}{\kappa_{\mathscr H}(\cdot,w)g}_{\mathscr H} = \inp{(Uf)(w)}{g}_E$
for every $f \in l^2(V)$ and $g \in E.$
\item [(ii)] $\kappa_{\mathscr H}$ is given by
 \begin{equation}\label{eq2}
\kappa_{\mathscr H}(z,w)=I_E+\displaystyle\sum_{\underset{|j-k|\leq
k_\mathscr{T}}{j,k\geq1}}C_{j,k}z^j\overline{w}^k~(z, w \in \mathbb D_{r_{\lambda}}),
 \end{equation}
 where $I_E$ denotes the identity operator on $E$, and $C_{j,k}$ are bounded 
linear operators on $E$ given by 
 $$C_{j,k}=P_ES^{*j}_{\lambda'}S_{\lambda'}^k|_E~(j, k =1, 2, \cdots)$$ with 
$P_E$ 
being the orthogonal projection of $l^2(V)$ onto $E$. 
 \item [(iii)] The $E$-valued polynomials in $z$ are dense in $\mathscr H$. In fact,
$$\mathscr H=\bigvee\{z^nf:f\in {\mathscr E},\ n\geq0\}.$$
\item [(iv)] $\mathscr H$ admits an orthonormal basis consisting of polynomials 
in $z$ with at most $k_\mathscr{T}+1$ non-zero coefficients.
 \end{itemize}
\end{theorem}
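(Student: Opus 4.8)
The plan is to build the space $\mathscr H$ directly from the vectors $U e_u$, $u \in V$, using Shimorin's construction recalled in Section~1, and then to exploit the rigid structure of the Cauchy dual $S_{\lambda'}$ on a tree of finite branching index to pin down everything. First I would record the key observation that, for $u \in V$ with $n_u \geq k_{\mathscr T}$, the vertex $u$ has no branching ancestor beyond level $n_u - k_{\mathscr T}$, so the backward orbit $S_{\lambda'}^{*n} e_u$ becomes, after $k_{\mathscr T}$ steps, a scalar multiple of a single basis vector $e_{\mathsf{par}^{\langle m\rangle}(u)}$; combined with Lemma~\ref{facts}(ii), this says that $P_E S_{\lambda'}^{*n} e_u = 0$ once $n$ exceeds $n_u$ by more than $k_{\mathscr T}$ whenever the intermediate ancestors are non-branching. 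More precisely, I would show that $U e_u(z) = \sum_{n\geq 0}(P_E S_{\lambda'}^{*n} e_u) z^n$ has, for the purposes of the kernel expansion, the finite-band behaviour: the exponents $n$ that contribute relative to $n_u$ differ by at most $k_{\mathscr T}$. This is exactly the mechanism that forces the ``$|j-k|\le k_{\mathscr T}$'' condition in \eqref{eq2}.

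Next I would verify analyticity of $S_\lambda$ (this is Lemma~\ref{lem1}, assumed available) so that Shimorin's model applies, giving a $z$-invariant RKHS $\mathscr H_0$ on $\mathbb D_{1/r(S_{\lambda'})}$ with kernel \eqref{rk}, a unitary $U$ with $\mathscr M_z U = U S_\lambda$, and the reproducing property (i). The substantive point is to enlarge the disc from $\mathbb D_{1/r(S_{\lambda'})}$ to $\mathbb D_{r_\lambda}$. For this I would estimate $\|U_x(z)\|_E$ using the explicit weights \eqref{weights}: writing $x=e_u$ and using Lemma~\ref{facts}(ii) for $S_{\lambda'}$, the coefficient $P_E S_{\lambda'}^{*n} e_u$ is controlled by products $\lambda'_v \lambda'_{\mathsf{par}(v)}\cdots \lambda'_{\mathsf{par}^{\langle n-1\rangle}(v)}$ with $v$ ranging over an appropriate $\mathsf{Chi}^{\langle j\rangle}(\mathsf{root})$; summing over the band $W_n$ and taking $n$-th roots produces precisely the $\liminf$ defining $r_\lambda$ in \eqref{radius}. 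Showing $r_\lambda \geq 1/r(S_{\lambda'})$ (equivalently $r_\lambda\, r(S_{\lambda'})\geq 1$) then follows by comparing the band sum $\sum_{v\in W_n}$ with the full sum $\|S_{\lambda'}^{*n}\|$-type quantity, since $W_n$ is just a union of $k_{\mathscr T}+1$ consecutive generations. One must check that $U$ extends to a unitary onto the completion $\mathscr H$ of the span of the (now-holomorphic-on-the-bigger-disc) functions $U e_u$, and that the reproducing kernel is still given by the same formula, now convergent on $\mathbb D_{r_\lambda}\times\mathbb D_{r_\lambda}$.

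With $\mathscr H$ in hand on the correct disc, part (ii) is the kernel formula: substitute $T=S_\lambda$, $T'=S_{\lambda'}$ into \eqref{rk} and observe that $P_E S_{\lambda'}^{*j} S_{\lambda'}^k|_E = 0$ whenever $|j-k| > k_{\mathscr T}$, by the backward-orbit analysis above (a vector in $E$ pushed forward $k$ times lands in generations $\le$ something, pulled back $j$ times must return to generation $0$, and the non-branching structure forces $j$ and $k$ to be close). Part (iii) follows because $U e_u$ for $u$ at generation $n_u$ is, modulo lower-order terms, $z^{n_u}$ times a constant from $\mathscr E$ plus corrections of band width $k_{\mathscr T}$; an induction on $n_u$ (inverting a finite lower-triangular-in-blocks system) shows $z^n \mathscr E \subseteq \overline{\bigvee\{U e_u\}} = \mathscr H$ and conversely, giving $\mathscr H = \bigvee\{z^n f : f\in\mathscr E,\ n\geq 0\}$. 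For part (iv), I would Gram–Schmidt the spanning set $\{U e_u\}_{u\in V}$ in an order compatible with generations: since each $U e_u$ is a polynomial whose nonzero coefficients lie in at most $k_{\mathscr T}+1$ consecutive degrees $\{n_u - k_{\mathscr T},\dots, n_u\}$ (or $\{n_u,\dots,n_u+k_{\mathscr T}\}$ depending on which direction the band runs), and orthogonalising against vectors supported in strictly lower degree ranges cannot widen the band, the resulting orthonormal basis consists of polynomials with at most $k_{\mathscr T}+1$ nonzero coefficients.

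The main obstacle I anticipate is the second paragraph: proving that the bpe disc is genuinely $\mathbb D_{r_\lambda}$ rather than merely $\mathbb D_{1/r(S_{\lambda'})}$, i.e.\ getting the sharp growth estimate on $\|U e_u(z)\|$ that matches the band-sum $\liminf$ in \eqref{radius}, and simultaneously checking that $U$ remains a \emph{unitary} onto the completed space (norm preservation, surjectivity, and that the enlarged functions really are holomorphic on the bigger disc with the kernel formula still valid there). The finite-branching combinatorics — precisely identifying which $\mathsf{par}^{\langle m\rangle}(v)$ survive the projection $P_E$ and bounding the surviving terms by the telescoping weight products — is the technical heart, and everything else (the kernel's band structure, density of polynomials, the orthonormal basis) is a relatively formal consequence once that estimate and the model $\mathscr M_z U = U S_\lambda$ are established.
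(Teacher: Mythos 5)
Your overall architecture coincides with the paper's: apply Shimorin's construction, use the finite branching index to confine $P_ES_{\lambda'}^{*n}f$ to the band $W_n$, and deduce the band structure of the kernel, the intertwining, and the polynomial basis. However, there is a genuine gap at exactly the point you flag as ``the main obstacle'': you never supply the estimate that makes every $U_f$, $f\in l^2(V)$, holomorphic on the \emph{whole} disc $\mathbb D_{r_\lambda}$. Your plan is to estimate $\|U_x(z)\|$ for $x=e_u$; but each $U_{e_u}$ is a polynomial (hence entire), so these individual estimates say nothing about the completion, and passing from basis vectors to a general $f=\sum_v f(v)e_v$ by linearity would require $\ell^1$-control of the coefficients, which you do not have. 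Without a bound of the form $\|U_f(z)\|\le C(z)\|f\|_{l^2(V)}$ valid for all $f$, the ``completion of the span of the $U_{e_u}$'' is only an abstract Hilbert space, and you cannot assert that its elements are holomorphic functions on $\mathbb D_{r_\lambda}$, nor that the kernel formula converges there. The paper closes this gap with a Cauchy--Schwarz argument whose combinatorial input is Lemma \ref{branch}(iv): since $W_n\cap W_m\neq\emptyset$ only when $|n-m|\le k_{\mathscr T}$, each vertex lies in at most $k_{\mathscr T}+1$ of the sets $W_n$, whence $\sum_{n\ge0}\sum_{v\in W_n}|f(v)|^2\le(k_{\mathscr T}+1)\|f\|^2$ and
\begin{equation*}
\Big\|\sum_{n=0}^{k}(P_ES_{\lambda'}^{*n}f)z^n\Big\| \;\le\; \sqrt{k_{\mathscr T}+1}\,\|f\|\Big(\sum_{n\ge0}\sum_{v\in W_n}\big(\lambda'_v\lambda'_{\parent v}\cdots\lambda'_{\parentn{n-1}{v}}\big)^2|z|^{2n}\Big)^{\frac12},
\end{equation*}
the right-hand side converging precisely for $|z|<r_\lambda$ by the definition \eqref{radius}. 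This single inequality is the theorem's technical heart; your proposal gestures at it but does not produce it.

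Two smaller points. For (iv) you propose Gram--Schmidt on $\{Ue_u\}$; this is unnecessary (the $U_{e_u}$ are already an orthonormal basis, being the image of one under the unitary $U$) and your claim that orthogonalisation ``cannot widen the band'' is not justified, since subtracting projections onto earlier vectors with overlapping but different bands can introduce new nonzero coefficients. The paper instead shows directly from Lemma \ref{facts}(ii) and Lemma \ref{branch}(i) that $P_ES_{\lambda'}^{*k}e_u=0$ for $0\le k\le n_u-k_{\mathscr T}-1$, so each $U_{e_u}$ itself has at most $k_{\mathscr T}+1$ nonzero coefficients. For (iii), your ``invert a lower-triangular-in-blocks system'' induction is left vague; the paper's route is cleaner: $S_{\lambda'}$ is again a weighted shift on $\mathscr T$, hence analytic by Lemma \ref{lem1}, and Shimorin's Proposition 2.7 then yields the wandering subspace identity $l^2(V)=\bigvee_{n\ge0}S_\lambda^n(E)$ directly.
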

\begin{remark}
Let $S_\lambda \in B(l^2(V))$ be a left-invertible weighted shift with non-negative
weights $\{\lambda_v\}_{v\in V^{\circ}}$. 
Let $u_0 \in V$ be such that $\parent {u_0} \in V_{\prec}$.
Suppose that $\lambda_{u_0} =0$ and $\lambda_{u} > 0$ for all $u \in V \setminus \{u_0\}$. Then by \cite[Proposition 3.1.6]{Jablonski}, $S_{\lambda}$ can be decomposed as an orthogonal direct sum of two weighted shifts $S_{\lambda,1}, S_{\lambda, 2}$ on directed trees with positive weights. Since $S_{\lambda}$ is left-invertible, so are $S_{\lambda, 1}$ and $S_{\lambda, 2}$. 
By the theorem above, there exist multiplication operators $\mathscr M_z^{(i)}$ on reproducing kernel Hilbert spaces
$\mathscr H^{(i)}$ for $i=1, 2$ such that $S_{\lambda}$ is unitarily equivalent to
${\mathscr M_z^{(1)}} \oplus {\mathscr M_z^{(2)}}$. Note that $\mathscr H_1 \oplus \mathscr H_2$ is the reproducing kernel Hilbert space associated with the reproducing kernel $\kappa_{\mathscr H^{(1)}} + \kappa_{\mathscr H^{(2)}}$ (refer to \cite{Ar}).
\end{remark}

The  inequality $r_{\lambda} r(S_{\lambda'}) \geq 1$ in Theorem \ref{thm1} may be strict in general. 

\begin{example}
Consider the weighted shift $S_\lambda$ on the directed tree $\mathscr T_1$ (as discussed in Example \ref{diagonal}) 
with weights $\lambda_n$ given by  
$$\lambda_1=\lambda_2=\lambda_3=\lambda_4=1,\ \mbox{and}\ \lambda_k=\begin{cases}
\frac{1}{2}, & \mbox{if}\ 2^n+1 \leq k \leq 3.2^{n-1},\ n \geq 2\\
1, & \mbox{otherwise.}
\end{cases}$$
Note that $\inf_{n\geq1}\lambda_n=\frac{1}{2}$. Thus $S_\lambda$ is left-invertible and hence $S_{\lambda'}$ is bounded. Further, for any $n \geq 1,$ total number of $\frac{1}{2}$'s occurring in first $2^n$ places is equal to $2^{n-1}-2^{n-2}+2^{n-2}-2^{n-3}+\cdots+4-2=2^{n-1}-2$. Therefore, we get 
$$\lambda_1 \lambda_2 \cdots \lambda_{2^n}=\frac{1}{2^{2^{n-1}-2}}=\frac{2^2}{2^{2^{n-1}}}.$$ 
Let $n$ be any positive integer. Then there is a unique positive integer $m_n$ such that $2^{m_n} \leq n < 2^{m_n+1}$. Let $n=2^{m_n}+k$ for some integer $k$ such that $0 \leq k < 2^{m_n}$.  Therefore, 
$$\lambda_1 \lambda_2 \cdots \lambda_{n} \geq \frac{1}{2^{2^{m_n-1}-2+k}}=\frac{2^2}{2^{2^{m_n-1}+k}}.$$ 
Since $k < 2^{m_n}$, $\frac{2^{m_n -1} + k}{n} = 1- \frac{2^{m_n -1}}{2^{m_n}+k} < \frac{3}{4}.$ It follows that
\beqn \left(\lambda_1 \lambda_2 \cdots \lambda_{n}\right)^{\frac{1}{n}} \geq \left(\frac{2^2}{2^{2^{m_n-1}+k}}\right)^{\frac{1}{n}} > 2^{\frac{2}{n} - \frac{3}{4}}, \eeqn and hence $r_{\lambda} = \liminf_{n \rar \infty} \left({\lambda_n} 
{\lambda_{n-1}} \cdots 
{\lambda_{1}}\right)^{\frac{1}{n}}$ is at least $2^{-\frac{3}{4}}.$
On the other hand, $r(S_{\lambda'})=2.$ This can be seen as follows.
Note that $\lambda'_n=\frac{1}{\lambda_n}$. Therefore, \beqn r(S_{\lambda'})=\lim_{n \rar \infty}(\sup_{m\geq1}\lambda'_{m+1}\cdots\lambda'_{m+n})^\frac{1}{n}=(2^n)^\frac{1}{n}=2 \eeqn (since ${2}$'s occur in $\{\lambda'_n\}$ consecutively at $2^{n-1}$ places for $n \geq 2$). Thus we have $r_{\lambda} r(S_{\lambda'}) \geq 2^{\frac{1}{4}},$ which is obviously bigger than $1.$
\end{example}

Since the proof of Theorem \ref{thm1} consists of several observations of independent interest, it will be presented in the next section. 
In the remaining part of this section, we discuss some immediate consequences of the main theorem. 
First a terminology.

Let ${\mathscr M}_z, \kappa_{\mathscr H}$, and $\mathscr H$ be as appearing in 
the statement of 
Theorem \ref{thm1}.
For the sake of convenience, we will refer to the triple $({\mathscr M}_z, 
\kappa_{\mathscr H}, 
\mathscr H)$ as the {\it analytic model} of the left-invertible weighted shift $S_{\lambda}$ 
acting on the directed tree $\mathscr T.$

{\it Except the final section of this paper, we assume that $\mathscr T$ is a leafless, rooted directed tree with finite branching index $k_{\mathscr T}.$}

An operator 
$T$ in ${B}({\mathcal H})$ is said to be {\it finitely cyclic} if there are a finite number of vectors $h_1, \cdots, h_m$
in $\mathcal H$ such that 
\beqn
{\mathcal H} = \bigvee {\{T^k h_i : k \geq 0, i=1, \cdots, m \}}.
\eeqn
In case $m=1$, we refer to $T$ as {\it cyclic operator} with {\it cyclic vector} $h_1.$
We say that $T$ is {\it infinitely cyclic} if it is not finitely cyclic.

\begin{corollary} \label{cyclic}
Let $S_\lambda \in B(l^2(V))$ be a weighted shift on $\mathscr T$.
If $E:= \ker S^*_{\lambda}$ is finite dimensional then $S_\lambda$ is finitely cyclic.
\end{corollary}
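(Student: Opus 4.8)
The plan is to reduce everything to the single statement
$$
\mathcal M \;:=\; \bigvee\bigl\{\,S_\lambda^{\,n} g : g \in E,\ n \ge 0\,\bigr\} \;=\; l^2(V),
$$
after which we are done: if $m := \dim E < \infty$ and $g_1,\dots,g_m$ is an orthonormal basis of $E$, then the above reads $l^2(V) = \bigvee\{S_\lambda^{\,k} g_i : k \ge 0,\ 1 \le i \le m\}$, exhibiting $S_\lambda$ as $m$-cyclic, hence finitely cyclic. Note that if one additionally assumes $S_\lambda$ left-invertible (the standing hypothesis of this section would then make Theorem \ref{thm1} available), the identity $\mathcal M = l^2(V)$ is immediate from Theorem \ref{thm1}(iii): there $\mathscr H = \bigvee\{z^{\,n} f : f \in \mathscr E,\ n \ge 0\}$ with $\dim \mathscr E = \dim E$, and the unitary $U$ intertwines $S_\lambda$ with $\mathscr M_z$ and carries $E$ onto $\mathscr E$. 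Since the corollary is stated for an arbitrary (not necessarily left-invertible) weighted shift, I would give the identity a direct proof, as follows.

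First I would record the structure of $E$ from \eqref{kernel},
$$
E = \langle e_{\mathsf{root}}\rangle \oplus \bigoplus_{v \in V_{\prec}}\bigl(l^2(\mathsf{Chi}(v)) \ominus \langle \lambdab^v\rangle\bigr),
$$
together with the observation that, under the canonical inclusion $l^2(\mathsf{Chi}(v)) \hookrightarrow l^2(V)$, the vector $\lambdab^v \in l^2(\mathsf{Chi}(v))$ (the function $u \mapsto \lambda_u$) is exactly $S_\lambda e_v = \sum_{u \in \mathsf{Chi}(v)} \lambda_u e_u$; in particular $\langle \lambdab^v\rangle = \langle S_\lambda e_v\rangle$ as one-dimensional subspaces of $l^2(\mathsf{Chi}(v)) \subseteq l^2(V)$ (one-dimensional because $0 < \|S_\lambda e_v\|^2 = \sum_{u \in \mathsf{Chi}(v)}\lambda_u^2 < \infty$ by boundedness of $S_\lambda$ and positivity of the weights). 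Then I would prove $e_u \in \mathcal M$ for every $u \in V$ by induction on the generation index $n_u$, using the disjoint decomposition \eqref{disjoint}. The base case $n_u = 0$ is clear since $e_{\mathsf{root}} \in E \subseteq \mathcal M$. For the inductive step, let $u$ have $n_u = n+1$ and put $w := \mathsf{par}(u)$, so $n_w = n$ and $e_w \in \mathcal M$ by the inductive hypothesis; as $\mathcal M$ is $S_\lambda$-invariant, $S_\lambda e_w = \sum_{v \in \mathsf{Chi}(w)} \lambda_v e_v \in \mathcal M$. If $w \notin V_{\prec}$, then leaflessness forces $\mathsf{Chi}(w) = \{u\}$ and $\lambda_u > 0$, so $e_u \in \mathcal M$. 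If $w \in V_{\prec}$, then $l^2(\mathsf{Chi}(w)) \ominus \langle \lambdab^w\rangle \subseteq E \subseteq \mathcal M$ by \eqref{kernel}, while $\langle \lambdab^w\rangle = \langle S_\lambda e_w\rangle \subseteq \mathcal M$; since these two orthogonal summands span $l^2(\mathsf{Chi}(w))$, we get $l^2(\mathsf{Chi}(w)) \subseteq \mathcal M$, and in particular $e_u \in \mathcal M$. As $\{e_u\}_{u \in V}$ is an orthonormal basis of $l^2(V)$, the induction gives $\mathcal M = l^2(V)$.

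I do not expect a serious obstacle here: the two points needing a little care are the identification $\langle \lambdab^w\rangle = \langle S_\lambda e_w\rangle$ inside $l^2(\mathsf{Chi}(w))$ and the case split at branching versus non-branching vertices. Conceptually, the mechanism is that $\ker S_\lambda^*$ already supplies, at every branching vertex $w$, a codimension-one slice of $l^2(\mathsf{Chi}(w))$, and the single missing direction is recovered by applying $S_\lambda$ to $e_{\mathsf{par}(w)}$; thus cyclicity propagates outward from the root, and the finiteness of $\dim E$ only enters at the very end to convert this into finite cyclicity.
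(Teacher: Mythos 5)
Your argument is correct, and it takes a genuinely different route from the paper. The paper's proof is a two-line reduction: $S_\lambda$ is injective because $\mathscr T$ is leafless, the finite-dimensionality of $E$ is invoked to conclude that $\mbox{ran}\, S_\lambda$ is closed, hence $S_\lambda$ is left-invertible, and then Theorem \ref{thm1}(iii) (whose proof rests on Shimorin's wandering-subspace theorem \cite[Proposition 2.7]{Shimorin} for left-invertible analytic operators) yields $l^2(V)=\bigvee_{n\ge 0}S_\lambda^n(E)$. You instead prove this last identity directly, by induction on the generation $n_u$, using only \eqref{disjoint}, \eqref{kernel} and the identification of $\lambdab^v$ with $S_\lambda e_v$ inside $l^2(\child{v})$; the points you flag (one-dimensionality of $\langle\lambdab^v\rangle$, the branching versus non-branching dichotomy, the $S_\lambda$-invariance of $\mathcal M$) all check out. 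Your route buys two things. First, it uses neither left-invertibility nor the finiteness of $k_{\mathscr T}$, and $\dim E<\infty$ enters only in the final sentence, so you have in fact established the wandering-subspace property $\bigvee_{n\ge0}S_\lambda^{n}E=l^2(V)$ for an arbitrary bounded weighted shift on a leafless rooted directed tree. Second, it bypasses the implication ``$\dim E<\infty$ implies $\mbox{ran}\, S_\lambda$ closed,'' which the paper asserts without justification and which is not automatic: the unilateral weighted shift on $\mathscr T_1$ with weights $\lambda_n=1/n$ is injective with one-dimensional cokernel and non-closed range, so the paper's reduction to Theorem \ref{thm1} does not apply verbatim there, whereas your induction still does. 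What the paper's route buys, once left-invertibility is actually in hand, is brevity, since the conclusion is then immediate from the analytic model already constructed.
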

\begin{proof}
Since $\mathscr T$ is leafless, by 
\cite[Proposition 3.1.7]{Jablonski}, $S_{\lambda}$ is injective.
If $E:= \ker S^*_{\lambda}$ is finite dimensional then the range of $S_{\lambda}$ is closed, and hence 
$S_{\lambda}$ is left-invertible.
Now appeal to Theorem \ref{thm1}(iii).
\end{proof}

In general, the reproducing kernel Hilbert space $\mathscr H$ as constructed in the proof of Theorem \ref{thm1} can not be realized as the tensor product $\mathscr K \otimes E$, where $\mathscr K$ is a Hilbert space of scalar-valued holomorphic functions. 
We make this explicit in the following result. 

\begin{corollary} \label{tensor} 
Let $S_\lambda \in B(l^2(V))$ be a left-invertible weighted shift on  
$\mathscr T$. Let $({\mathscr M}_z, 
\kappa_{\mathscr H}, 
\mathscr H)$ denote the analytic model of $S_\lambda$ and let $E:= \ker S^*_{\lambda}$.
Suppose that there exist a Hilbert space $\mathscr K$ of scalar-valued holomorphic functions and an isometric isomorphism $\Phi : \mathscr H \rar \mathscr K \otimes E$ such that $\Phi(p f) = p \otimes f$ for every polynomial $p \in \mathscr K$ and $f \in E$.
Then the reproducing kernel $\kappa_{\mathscr H}$ associated with $\mathscr H$ is the diagonal kernel given by
\beqn
\kappa_{\mathscr H}(z,w)=I_E+\displaystyle\sum_{j=1}^{\infty}
\left(P_ES^{*j}_{\lambda'}S_{\lambda'}^j|_E\right) z^j\overline{w}^j~(z, w \in \mathbb D_{r_{\lambda}}).
\eeqn
\end{corollary}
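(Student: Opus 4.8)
The plan is to exploit the rigidity of the tensor decomposition together with the explicit form \eqref{eq2} of the kernel established in Theorem~\ref{thm1}. Suppose $\Phi:\mathscr H\to\mathscr K\otimes E$ is an isometric isomorphism with $\Phi(pf)=p\otimes f$ for every polynomial $p\in\mathscr K$ and $f\in E$. First I would record that, by Theorem~\ref{thm1}(iii), $\mathscr H=\bigvee\{z^n f: f\in\mathscr E,\ n\ge 0\}$, so in particular each monomial $z^n$ must lie in $\mathscr K$ (otherwise $z^n f$ would not be in the domain of the hypothesis on $\Phi$, yet these vectors span $\mathscr H$); thus $\Phi(z^n f)=z^n\otimes f$ for all $n\ge 0$ and all $f\in E$. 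Consequently, for $f,g\in E$ and $m,n\ge 0$,
\[
\langle z^m f,\ z^n g\rangle_{\mathscr H}=\langle z^m\otimes f,\ z^n\otimes g\rangle_{\mathscr K\otimes E}=\langle z^m, z^n\rangle_{\mathscr K}\,\langle f,g\rangle_E .
\]

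Next I would translate the left-hand side into the kernel coefficients $C_{j,k}=P_E S^{*j}_{\lambda'}S^k_{\lambda'}|_E$. Using the reproducing property together with \eqref{eq2}, or more directly the identity $\langle z^m f, z^n g\rangle_{\mathscr H}=\langle U^{-1}(z^m f), U^{-1}(z^n g)\rangle_{l^2(V)}$ and the intertwining $US_\lambda=\mathscr M_z U$, one computes that $\langle z^m f, z^n g\rangle_{\mathscr H}$ is expressible through the $C_{j,k}$'s; the key point is that $\langle z^m\mathscr E, z^n\mathscr E\rangle$ governs precisely the block $C_{m,n}$ (with the convention $C_{0,0}=I_E$, $C_{j,0}=C_{0,k}=0$ for $j,k\ge1$, which is consistent with the constant term $I_E$ and the absence of pure $z^j$ or $\overline w^k$ terms in \eqref{eq2} — indeed $\mathscr E$ being the constant functions forces $P_E S^{*j}_{\lambda'}|_E\,$-type pairings to vanish appropriately). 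The upshot is that the tensor hypothesis forces
\[
\langle C_{m,n}f,\ g\rangle_E=\langle z^m,z^n\rangle_{\mathscr K}\,\langle f,g\rangle_E\qquad(f,g\in E),
\]
i.e. $C_{m,n}=c_{m,n}I_E$ is a scalar multiple of the identity, where $c_{m,n}:=\langle z^m,z^n\rangle_{\mathscr K}$.

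Finally I would combine this with the bandwidth constraint. Since $C_{m,n}=c_{m,n}I_E$ and $\langle z^m,z^n\rangle_{\mathscr K}$ is a genuine Gram matrix of the linearly independent monomials in $\mathscr K$, this matrix is positive definite; but the bandwidth statement in \eqref{eq2} says $C_{j,k}=0$ whenever $|j-k|>k_{\mathscr T}$. A positive definite (infinite) Gram matrix that is also banded must in fact be diagonal — this is the standard fact that a positive semidefinite matrix with a zero diagonal entry pattern forces the corresponding off-diagonal entries to vanish: if $c_{m,m+d}=0$ for all $d>k_{\mathscr T}$ while the matrix stays positive, then iterating the $2\times2$ minor condition $|c_{m,n}|^2\le c_{m,m}c_{n,n}$ together with a shift argument (or directly: in a reproducing kernel Hilbert space of holomorphic functions on a disc, if $z^m\perp z^n$ for some pair with $m<n$, then by analytic continuation / multiplication by $z$ one gets $z^m\perp z^{n+1}$, hence $z^m\perp z^{k}$ for all $k>m$, and symmetrically, forcing all off-diagonal inner products to be zero). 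Hence $c_{m,n}=0$ for $m\ne n$, so $\kappa_{\mathscr H}(z,w)=I_E+\sum_{j\ge1}C_{j,j}z^j\overline w^j$ with $C_{j,j}=\big(P_E S^{*j}_{\lambda'}S^j_{\lambda'}|_E\big)$, which is the asserted diagonal form.

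The main obstacle I anticipate is the middle step: cleanly justifying that $\langle z^m f, z^n g\rangle_{\mathscr H}$ equals $\langle C_{m,n}f,g\rangle_E$ (up to the stated conventions), since the $C_{j,k}$ enter the kernel rather than the Gram matrix of monomials directly; one must either invert \eqref{eq2} carefully or go back through the unitary $U$ and the description of $\mathscr E$ as the constant functions, checking that the orthogonality of $\mathscr E$ against the higher-order pieces is exactly what makes the monomial Gram matrix coincide with $(C_{m,n})$. The banded-positive-implies-diagonal argument is then a short, essentially self-contained lemma.
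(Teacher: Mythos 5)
Your overall skeleton (use the tensor hypothesis to force orthogonality of the monomials in $\mathscr K$, then kill the off-diagonal $C_{j,k}$) is the right one, but both of the substantive steps you lean on are flawed. The first problem is the identification of the monomial Gram matrix with $(C_{m,n})$, which you yourself flag as the anticipated obstacle: it is in fact false. One has $\inp{z^mf}{z^ng}_{\mathscr H}=\inp{S^m_{\lambda}f}{S^n_{\lambda}g}_{l^2(V)}$, whereas $\inp{C_{n,m}f}{g}_E=\inp{S^n_{\lambda'}f}{S^m_{\lambda'}g}_{l^2(V)}$ is the Gram matrix of the \emph{Cauchy dual} orbits, not of the $S_{\lambda}$-orbits. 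Already in Example \ref{diagonal} these disagree: $C_{j,j}=(\lambda_1\cdots\lambda_j)^{-2}$ while $\|z^je_{\mathsf{root}}\|^2_{\mathscr H}=(\lambda_1\cdots\lambda_j)^{2}$. So there is no way to ``invert \eqref{eq2}'' to get $C_{m,n}=\inp{z^m}{z^n}_{\mathscr K}I_E$. What the argument actually needs, and what the paper supplies, is the implication: mutual orthogonality of $\{S^m_{\lambda}E\}_{m\ge 0}$ forces mutual orthogonality of $\{S^m_{\lambda'}E\}_{m\ge 0}$, and the latter is literally the statement $C_{j,k}=0$ for $j\neq k$. (This implication holds because $S^*_{\lambda'}S_{\lambda}=I$ and $S^*_{\lambda'}E=\{0\}$ make $S^*_{\lambda'}$ a backward shift relative to the orthogonal decomposition $l^2(V)=\bigoplus_m S^m_{\lambda}E$, hence $S_{\lambda'}$ maps $S^m_{\lambda}E$ into $S^{m+1}_{\lambda}E$; it is not automatic and must be said.)

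The second problem is your final step. A banded positive definite Gram matrix need not be diagonal (tridiagonal positive definite matrices are plentiful); the ``standard fact'' you invoke only applies when a \emph{diagonal} entry vanishes, which is not the case here since $\|z^m\|^2_{\mathscr K}>0$. Likewise the fallback ``$z^m\perp z^n$ implies $z^m\perp z^{n+1}$ by multiplication by $z$'' has no justification, since $\mathscr M_z$ is not unitary and need not preserve orthogonality. Neither bandwidth nor positivity is needed: the paper gets $\inp{z^m}{z^n}_{\mathscr K}=0$ for \emph{all} $m\neq n$ in one stroke by testing the tensor identity with $f=g=e_{\mathsf{root}}$, because $S^m_{\lambda}e_{\mathsf{root}}$ is supported on $\childn{m}{\mathsf{root}}$ and distinct generations are disjoint by \eqref{disjoint}, so $\inp{S^m_{\lambda}e_{\mathsf{root}}}{S^n_{\lambda}e_{\mathsf{root}}}=0$ automatically while $\inp{e_{\mathsf{root}}}{e_{\mathsf{root}}}=1$. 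Feeding this back into the tensor identity gives $S^m_{\lambda}E\perp S^n_{\lambda}E$ for all $f,g\in E$, and then the orbit-transfer observation above finishes the proof. Your opening remark that $z^n$ must lie in $\mathscr K$ is harmless but peripheral; the two gaps above are the real issues.
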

\begin{proof} Note that for any $f, g \in E$ and $m, n \in \mathbb Z_+$, 
\beq \label{orthogonal}
\inp{S^m_{\lambda}f}{S^n_{\lambda} g}_{l^2(V)}=\inp{z^mf}{z^n g}_{\mathscr H} = \inp{\Phi(z^mf)}{\Phi(z^n g)}_{\mathscr K \otimes E}=
\inp{z^m}{z^n}_{\mathscr K}\inp{f}{g}_{E}.
\eeq
Since $S^k_{\lambda}e_{\mathsf{root}} \in \bigvee \{e_v : v \in \childn{k}{\mathsf{root}}\}$, by an application of \eqref{disjoint},
we obtain $\inp{z^m}{z^n}_{\mathscr K}=0$ for $m \neq n$ after letting $f=e_{\mathsf{root}}=g.$ 
Hence by \eqref{orthogonal}, we must have $\inp{S^m_{\lambda}f}{S^n_{\lambda} g}_{l^2(V)}=0$
for any $f, g \in E$ and non-negative integers $m \neq n.$
This shows that the sequence $\{S^k_{\lambda}E\}_{k \geq 0}$ of subspaces of $l^2(V)$ is mutually orthogonal. 
It follows immediately that for any $f, g \in E$ and non-negative integers $j \neq k,$ \beqn \inp{P_ES^{*j}_{\lambda'}S_{\lambda'}^kf}{g}_E= \inp{S^{*j}_{\lambda'}S_{\lambda'}^kf}{g}_{l^2(V)}=\inp{S_{\lambda'}^kf}{S^{j}_{\lambda'}g}_{l^2(V)}=0. \eeqn
In particular, $P_ES^{*j}_{\lambda'}S_{\lambda'}^k|_E=0$ for all non-negative integers $j \neq k.$ The desired conclusion now follows from Theorem \ref{thm1}(ii).
\end{proof}
\begin{remark}
In view of Shimorin's model (as discussed in Section 1), after replacing $r_{\lambda}$ by $\frac{1}{r(S_{\lambda'})}$, one may obtain the conclusion of Corollary \ref{tensor} for any directed tree with infinite branching index $k_{\mathscr T}$. Thus, even for directed trees $\mathscr T$ with infinite $k_{\mathscr T}$, the associated reproducing kernel $k_{\mathscr H}$ could be multi-diagonal.
\end{remark}



Recall that $T \in B(\mathcal H)$ is an {\it isometry} if $T^*T=I.$
\begin{corollary} 
Consider the analytic model $({\mathscr M}_z, 
\kappa_{\mathscr H}, 
\mathscr H)$
 of a left-invertible weighted shift $S_{\lambda}$ on $\mathscr T$ and let $E:= \ker S^*_{\lambda}$. If $S_{\lambda}$ is an isometry then $\kappa_{\mathscr H}$ is the $B(E)$-valued Cauchy kernel given by 
 \beqn
 \kappa_{\mathscr H}(z, w) = \frac{I_E}{1- z \overline{w}}~(z, w \in \mathbb D).
 \eeqn
 In particular, $\mathscr H$ is the $E$-valued Hardy space of the open unit disc.
\end{corollary}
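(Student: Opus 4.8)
The plan is to feed the isometry hypothesis directly into the formula \eqref{eq2} for $\kappa_{\mathscr H}$. The first observation is that if $S_\lambda$ is an isometry, then $S_\lambda^*S_\lambda = I$, so its Cauchy dual is $S_{\lambda'} = S_\lambda(S_\lambda^*S_\lambda)^{-1} = S_\lambda$; in particular $S_{\lambda'}$ is again an isometry. Since $S_\lambda^n$ is then an isometry for every $n$, we have $\|S_\lambda^n\| = 1$, so the spectral radius formula gives $r(S_\lambda) = r(S_{\lambda'}) = 1$. Combined with the inequalities $\frac{1}{r(S_{\lambda'})} \le r_\lambda \le r(S_\lambda)$ recorded just before the statement of Theorem \ref{thm1}, this forces $r_\lambda = 1$, so the model lives on $\mathbb D_{r_\lambda} = \mathbb D$.

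Next I would compute the coefficient operators $C_{j,k} = P_E S_{\lambda'}^{*j} S_{\lambda'}^k|_E = P_E S_\lambda^{*j} S_\lambda^k|_E$ of Theorem \ref{thm1}(ii). Because $S_\lambda$ is an isometry, $S_\lambda^{*m}S_\lambda^m = I$ for every $m \ge 0$, and, the range of $S_\lambda$ being closed, $\ran S_\lambda = (\ker S_\lambda^*)^\perp = E^\perp$; hence $S_\lambda^m E \subseteq \ran S_\lambda \subseteq E^\perp$ and $P_E S_\lambda^m|_E = 0$ for every $m \ge 1$. For $j \le k$ we write $S_\lambda^{*j}S_\lambda^k = (S_\lambda^{*j}S_\lambda^j)S_\lambda^{k-j} = S_\lambda^{k-j}$, so $C_{j,k} = P_E S_\lambda^{k-j}|_E$, which equals $I_E$ when $j = k$ and $0$ when $j < k$. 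For $j > k$ use the adjoint symmetry $C_{j,k} = (P_E S_\lambda^{*k}S_\lambda^j|_E)^* = C_{k,j}^* = 0$. Thus $C_{j,k} = \delta_{j,k}\, I_E$ for all $j,k \ge 1$, and substituting this into \eqref{eq2} yields
\[
\kappa_{\mathscr H}(z,w) = I_E + \sum_{j \ge 1} z^j \overline{w}^j\, I_E = \Big(\sum_{j \ge 0} (z\overline{w})^j\Big) I_E = \frac{I_E}{1 - z\overline{w}}, \qquad z, w \in \mathbb D,
\]
the geometric series converging since $|z\overline{w}| < 1$.

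Finally, a reproducing kernel Hilbert space is uniquely determined by its reproducing kernel, and the RKHS with kernel $\frac{I_E}{1-z\overline{w}}$ is, by definition (and consistently with the polynomial description in Theorem \ref{thm1}(iii)--(iv), under which $\{z^n f\}$ with $f$ ranging over an orthonormal basis of $E$ becomes an orthonormal basis), the $E$-valued Hardy space $H^2(\mathbb D)\otimes E$ of the open unit disc; see \cite{Ar}. This yields the last assertion. There is no serious obstacle in the argument: the only points that need a little care are pinning down $r_\lambda = 1$ and disposing of the indices $j > k$ via the symmetry $C_{j,k} = C_{k,j}^*$.
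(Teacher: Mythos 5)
Your proposal is correct and follows essentially the same route as the paper: both reduce the claim, via uniqueness of the reproducing kernel, to showing $C_{j,k}=\delta_{j,k}I_E$, using that $S_{\lambda'}$ is again an isometry, that $S_{\lambda'}E\subseteq \mathrm{ran}\,S_\lambda=E^{\perp}$ kills the off-diagonal terms with $j<k$, and adjoint symmetry for $j>k$. The only (harmless) difference is that you pin down $r_\lambda=1$ inside the proof via $\tfrac{1}{r(S_{\lambda'})}\le r_\lambda\le r(S_\lambda)$, whereas the paper establishes $r_\lambda=1$ in the discussion following the corollary, deducing $r_\lambda\le 1$ from the fact that the Cauchy kernel does not extend beyond $\mathbb D\times\mathbb D$.
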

\begin{proof}
Assume that $S_{\lambda}$ is an isometry.
Note that $S_{\lambda'}$ is also isometry in view of hypothesis and $S^*_{\lambda'}S_{\lambda'} =(S^*_{\lambda}S_{\lambda})^{-1}.$ By the uniqueness of the reproducing kernel, it suffices to check that $C_{j, k}=\delta_{j, k}I_E$, where $$C_{j,k}:=P_ES^{*j}_{\lambda'}S_{\lambda'}^k|_E ~(j, k =1, 2, \cdots)$$ and $\delta_{j, k}$ denotes the Kronecker delta. If $j=k$ then
obviously $C_{j, k} = I_E.$ If $j < k$ then $C_{j, k} =
P_ES_{\lambda'}^{k-j}|_E = 0$ since $S_{\lambda'}E \subseteq \mbox{ran}\, S_\lambda = E^{\perp}.$   
\end{proof}
One rather striking consequence of the preceding corollary is as follows:
{\it If $S_{\lambda}$ is an isometry then $r_{\lambda}=1$}. Note that this observation is irrespective of the structure of the directed tree $\mathscr T$. On the other hand, the definition of $r_{\lambda}$ relies on the weight sequence $\{\lambda_v\}_{v \in V^{\circ}}$ and of course on the structure of $\mathscr T.$ To see this fact, assume that $S_{\lambda}$ is an isometry.
By Theorem
\ref{thm1}, we must have $r_{\lambda} r(S_{\lambda'}) \geq 1.$ However,
$S_{\lambda'}$ being isometry, $r(S_{\lambda'})=1$, and hence $r_{\lambda} \geq 1.$
Since $\frac{I_E}{1- z \overline{w}}$ is not defined on $\mathbb D_r \times \mathbb D_r$ for any $r > 1,$ by Theorem \ref{thm1}(ii), $r_{\lambda}$ can not exceed $1.$ 

\section{Proof of the Main Theorem}

The proof of Theorem \ref{thm1} involves several lemmas. The first of which collects some facts related to the set $W_n$. Recall from Definition \ref{defwn} that for any integer $n$, the set $W_n$ is given by 
\beq \label{Wn}
W_n:= \bigcup_{j=n}^{k_{\mathscr T} + n} 
\mathsf{Chi}^{\langle{j}\rangle}(\mathsf{root}).
\eeq

\begin{lemma} \label{branch}
 Let
$S_\lambda \in B(l^2(V))$ be a weighted shift on $\mathscr T$ and let $E:= \ker S^*_{\lambda}$. 
Then we have the following statements:
\begin{enumerate}
 \item[(i)] $E$ is a subspace of the (possibly infinite dimensional) space $\bigvee \{e_v: v \in W_0\}$.
 \item[(ii)] $\mbox{card}(\mathsf{Chi}^{\langle n \rangle 
}(\mathsf{root}))$ (possibly countably infinite) is constant for $n\geq k_\mathscr T$. In particular, $\mbox{card}\,(W_n)$ is constant for $n \geq k_{\mathscr T}$.
 \item[(iii)] For 
every $v 
\notin W_n$, $e_{{\mathsf{par}}^{\langle 
n \rangle}(v)}$ belongs to the orthogonal complement of $E,$
where $e_{{\mathsf{par}}^{\langle 
n \rangle}(v)}$ is understood to be the zero vector  in case ${\mathsf{par}}^{\langle 
n \rangle}(v)=\emptyset.$ 
\item[(iv)] For non-negative integers $m$ and $n,$ $W_n \cap W_m \neq \emptyset$ if and only if $|n-m| \leq k_{\mathscr T}.$ 
\end{enumerate}
\end{lemma}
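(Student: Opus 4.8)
The plan is to prove the four statements of Lemma~\ref{branch} essentially in order, relying throughout on the disjoint decomposition $V = \bigsqcup_{n\geq 0}\childn{n}{\mathsf{root}}$ from \eqref{disjoint}, on the explicit description \eqref{kernel} of $E = \ker S^*_\lambda$, and on Lemma~\ref{facts}(ii) for the action of $S^{*k}_\lambda$ on basis vectors.

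\textbf{Statements (i) and (ii).} For (i), recall from \eqref{kernel} that $E = \langle e_{\mathsf{root}}\rangle \oplus \bigoplus_{v\in V_\prec}\big(l^2(\child v)\ominus\langle\lambdab^v\rangle\big)$. Each summand $l^2(\child v)\ominus\langle\lambdab^v\rangle$ sits inside $\bigvee\{e_u : u\in\child v\}$, and if $v\in V_\prec$ then $n_v \leq k_{\mathscr T}-1$, so every such child $u$ satisfies $n_u = n_v+1 \leq k_{\mathscr T}$; also $e_{\mathsf{root}}$ has $n_{\mathsf{root}}=0$. Since $W_0 = \bigcup_{j=0}^{k_{\mathscr T}}\childn{j}{\mathsf{root}}$ collects exactly the vertices $u$ with $0\leq n_u\leq k_{\mathscr T}$, all these basis vectors lie in $\bigvee\{e_v:v\in W_0\}$, proving (i). For (ii), note that for $n\geq k_{\mathscr T}$ there are no branching vertices at level $\geq n$ (indeed $n_w\leq k_{\mathscr T}-1$ for all $w\in V_\prec$), so the map $u\mapsto\parent u$ restricts to a bijection $\childn{n+1}{\mathsf{root}}\to\childn{n}{\mathsf{root}}$ for $n\geq k_{\mathscr T}$: it is surjective because the tree is leafless (every vertex has a child) and injective because no vertex at these levels has two children. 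Hence $\mbox{card}(\childn{n}{\mathsf{root}})$ is constant for $n\geq k_{\mathscr T}$, and since $W_n$ is a disjoint union of $k_{\mathscr T}+1$ consecutive such level-sets, $\mbox{card}(W_n)$ is constant for $n\geq k_{\mathscr T}$ as well.

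\textbf{Statement (iii).} Let $v\notin W_n$ and assume ${\mathsf{par}}^{\langle n\rangle}(v)\neq\emptyset$, so $n_v\geq n$ and the vertex $w:={\mathsf{par}}^{\langle n\rangle}(v)$ has $n_w = n_v - n$. Since $v\notin W_n = \bigcup_{j=n}^{k_{\mathscr T}+n}\childn{j}{\mathsf{root}}$, we must have $n_v > k_{\mathscr T}+n$, hence $n_w = n_v - n > k_{\mathscr T}$, in particular $w\notin V_\prec$ and $w\neq\mathsf{root}$. I will show $e_w\perp E$ using \eqref{kernel}: $e_w$ is orthogonal to $\langle e_{\mathsf{root}}\rangle$ since $w\neq\mathsf{root}$, and for each $v'\in V_\prec$ the summand $l^2(\child{v'})\ominus\langle\lambdab^{v'}\rangle$ is supported on $\child{v'}$; if $w\notin\child{v'}$ then $e_w$ is trivially orthogonal to it, and $w\in\child{v'}$ would force $v' = \parent w\in V_\prec$ with $n_{v'} = n_w - 1 \geq k_{\mathscr T}$, contradicting $n_{v'}\leq k_{\mathscr T}-1$. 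Thus $e_w\in E^\perp$. (Alternatively, and more slickly, $e_w\perp E$ exactly when $e_w\in\overline{\ran\, S_\lambda}$, and since $w$ is a non-root non-branching vertex with parent $\parent w$ whose only child is $w$, one has $e_w = \frac{1}{\lambda_w}S_\lambda e_{\parent w}\in\ran\, S_\lambda$.)

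\textbf{Statement (iv).} This is a purely combinatorial statement about the index sets. Writing $W_n$ as the set of vertices $u$ with $n\leq n_u\leq k_{\mathscr T}+n$, two such intervals $[n, k_{\mathscr T}+n]$ and $[m, k_{\mathscr T}+m]$ of integers overlap precisely when $|n-m|\leq k_{\mathscr T}$; the nontrivial point is that when they overlap the corresponding vertex sets actually intersect, which follows because $\childn{j}{\mathsf{root}}\neq\emptyset$ for every $j\geq 0$ (the tree is leafless and rooted, so every level is nonempty). I expect statement (iii) to be the main obstacle, since it is the one genuinely used later to kill off-diagonal terms of the kernel and requires carefully tracking the level $n_v$ against the window $[n,k_{\mathscr T}+n]$ together with the structural fact that no branching occurs beyond level $k_{\mathscr T}-1$; the rest is bookkeeping with \eqref{disjoint} and \eqref{kernel}.
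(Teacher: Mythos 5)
Your proof is correct and follows essentially the same route as the paper: the decomposition \eqref{disjoint}, the kernel formula \eqref{kernel}, and the fact that branching vertices occur only at levels $\le k_{\mathscr T}-1$ do all the work, exactly as in the original. The only (cosmetic) differences are that in (ii) you phrase the counting via the parent map rather than by counting children, and in (iii) you verify orthogonality to each summand of \eqref{kernel} directly (or via $e_w\in\ran S_\lambda$) where the paper simply observes $\parentn{n}{v}\notin W_0$ and invokes part (i).
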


\begin{proof}
Note that $\child{V_{\prec}} \subseteq W_0.$ Hence by \eqref{kernel}, 
\beq \label{ker-W0} E \subseteq \langle e_{\mathsf {root}}\rangle 
\oplus \bigoplus_{v \in 
V_{\prec}} l^2(\mathsf{Chi}(v)) \subseteq \bigvee \{e_v : v \in W_0\}.\eeq
This yields (i). To see (ii), recall that $n_u$ is the unique non-negative integer 
such that $u \in \mathsf{Chi}^{\langle n_u\rangle}(\mathsf{root})$. 
Note that $\mbox{card}(\mathsf{Chi}(u))=1$ if 
$n_u\geq k_\mathscr T$, where we used the assumption that $\mathscr T$ is leafless.
Thus $\mbox{card}(\mathsf{Chi}^{\langle n \rangle 
}(\mathsf{root}))$ is constant for $n\geq k_\mathscr T$. 
This proves (ii).

We now check (iii).
Let $v \notin W_n.$ Since  
$E$ is a subspace of $\bigvee \{e_v : v \in W_0\}$ by part 
(i),  it suffices to check that $e_{{\mathsf{par}}^{\langle 
n \rangle}(v)}$ is orthogonal to $\{e_v : v 
\in W_0\}$. Note that $n_v < n$ or $n_v > k_{\mathscr T} + n,$ 
If $n_v < n$ then $e_{{\mathsf{par}}^{\langle 
n \rangle}(v)}=e_{\emptyset} =0$ by convention. Otherwise, 
${\mathsf{par}}^{\langle 
n \rangle}(v) \notin W_0,$ and hence $e_{{\mathsf{par}}^{\langle 
n \rangle}(v)}$ is orthogonal to $\{e_v : v 
\in W_0\}$. To see (iv), let $n, m$ be two non-negative integers such that $n < m.$ If $n + k_{\mathscr T} < m$ then
clearly, $W_n \cap W_m = \emptyset.$ So suppose that $n + k_{\mathscr T} \geq m.$
Then $W_n \cap W_m = \cup_{k=m}^{n+k_{\mathscr T}} \childn{k}{\mathsf {root}},$ which is obviously non-empty.
\end{proof}

Next we prove that the radius of convergence for any left-invertible weighted shift with finite dimensional cokernel is positive.
\begin{lemma} \label{r-lambda}
Let $S_\lambda \in B(l^2(V))$ be a left-invertible
weighted shift on $\mathscr T$ and
let $S_{\lambda'}$ be the Cauchy dual of $S_{\lambda}.$
If $r(S_{\lambda'})$ denotes the spectral radius of $S_{\lambda'}$ then the radius of convergence $r_{\lambda}$ for $S_{\lambda}$ satisfies $r_{\lambda} r(S_{\lambda'}) \geq 1.$ In particular, $r_{\lambda}$ is 
positive.
\end{lemma}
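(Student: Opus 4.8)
The plan is to prove the inequality $r_{\lambda}\, r(S_{\lambda'}) \geq 1$ directly from the definition \eqref{radius} of $r_{\lambda}$, and then observe that positivity of $r_{\lambda}$ follows for free: since $S_{\lambda}$ is left-invertible, $S_{\lambda'}$ is a bounded operator, so $r(S_{\lambda'}) < \infty$, whence $r_{\lambda} \geq 1/r(S_{\lambda'}) > 0$ (the spectral radius of the nonzero operator $S_{\lambda'}$ is positive — indeed $\|S_{\lambda'}^n e_{\mathsf{root}}\| \neq 0$ for all $n$ since $\mathscr T$ is leafless). So the whole content is the inequality.

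First I would rewrite the sum appearing in \eqref{radius} in operator-theoretic terms. By Lemma \ref{facts}(i) applied to the weighted shift $S_{\lambda'}$, for $u = \mathsf{root}$ and any $n \geq 1$,
$$\|S_{\lambda'}^n e_{\mathsf{root}}\|^2 = \sum_{v \in \childn{n}{\mathsf{root}}} \big(\lambda'_v \lambda'_{\parent v} \cdots \lambda'_{\parentn{n-1}{v}}\big)^2.$$
The sum $\sum_{v \in W_n}(\cdots)^2$ in \eqref{radius} is the same quantity but with $\childn{n}{\mathsf{root}}$ replaced by $W_n = \bigcup_{j=n}^{k_{\mathscr T}+n}\childn{j}{\mathsf{root}}$. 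The key point is that, for $j > n$, the factor $\lambda'_v \lambda'_{\parent v}\cdots \lambda'_{\parentn{n-1}{v}}$ is exactly the coefficient with which $e_v$ appears in $S_{\lambda'}^n e_{\parentn{n}{v}}$, and $\parentn{n}{v} \in \childn{j-n}{\mathsf{root}}$. Grouping the vertices $v \in W_n$ according to their ancestor $\parentn{n}{v} =: w$, which ranges over $\bigcup_{i=0}^{k_{\mathscr T}} \childn{i}{\mathsf{root}}= W_0$, gives
$$\sum_{v \in W_n}\big(\lambda'_v \cdots \lambda'_{\parentn{n-1}{v}}\big)^2 = \sum_{w \in W_0} \|S_{\lambda'}^n e_w\|^2.$$
Thus the expression in \eqref{radius} inside the $\liminf$ is $\big(\sum_{w \in W_0}\|S_{\lambda'}^n e_w\|^2\big)^{-1/(2n)}$, and I must show $\liminf_n \big(\sum_{w \in W_0}\|S_{\lambda'}^n e_w\|^2\big)^{-1/(2n)}\cdot r(S_{\lambda'}) \geq 1$, equivalently $\limsup_n \big(\sum_{w \in W_0}\|S_{\lambda'}^n e_w\|^2\big)^{1/(2n)} \leq r(S_{\lambda'})$.

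Now I would invoke the spectral radius formula $r(S_{\lambda'}) = \lim_n \|S_{\lambda'}^n\|^{1/n}$. For each fixed $w$, $\|S_{\lambda'}^n e_w\| \leq \|S_{\lambda'}^n\|$. There are two cases. If $k_{\mathscr T}$ is finite but $W_0$ is infinite (which can happen, per the remark after Proposition \ref{probranch}), I need a little care: Lemma \ref{branch}(i) says $E \subseteq \bigvee\{e_v : v \in W_0\}$, but more usefully, $W_0$ infinite forces $\childn{i}{\mathsf{root}}$ infinite for some $i \leq k_{\mathscr T}$; here one should instead bound $\sum_{w \in W_0}\|S_{\lambda'}^n e_w\|^2$ using that $S_{\lambda'}$ is a bounded weighted shift — the vectors $\{S_{\lambda'}^n e_w\}_{w \in W_0}$ have pairwise disjoint supports (they live on $\childn{n}{w}$ for distinct $w$, hence on disjoint subsets by \eqref{disjoint}), so $\sum_{w \in W_0}\|S_{\lambda'}^n e_w\|^2 = \big\|\sum_{w \in W_0} S_{\lambda'}^n e_w\big\|^2 \leq \|S_{\lambda'}^n\|^2 \cdot \big\|\sum_{w\in W_0} e_w\big\|^2$, but $\sum_{w \in W_0} e_w$ need not be in $l^2(V)$. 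The clean fix: restrict to the finite case first. Since $k_{\mathscr T}$ finite and $W_0$ countable, write $W_0 = \{w_1, w_2, \dots\}$ and for each $N$ use $\sum_{i=1}^{N}\|S_{\lambda'}^n e_{w_i}\|^2 = \|S_{\lambda'}^n \sum_{i=1}^N e_{w_i}\|^2 \leq \|S_{\lambda'}^n\|^2 \cdot N$; but $N$ depends on how much of the (possibly infinite) sum we keep. The robust argument is: by the disjoint-support observation, $\sum_{w \in W_0}\|S_{\lambda'}^n e_w\|^2 = \sup_{F}\sum_{w \in F}\|S_{\lambda'}^n e_w\|^2$ over finite $F \subseteq W_0$, and for finite $F$, $\sum_{w \in F}\|S_{\lambda'}^n e_w\|^2 = \|S_{\lambda'}^n \chi_F\|^2 \leq \|S_{\lambda'}^n\|^2\|\chi_F\|^2 = \|S_{\lambda'}^n\|^2 \,\mathrm{card}(F)$ — this blows up. So instead I bound termwise and use that only finitely many $w \in W_0$ matter up to exponential order: actually the correct and simplest route is to note $\sum_{w\in W_0}\|S_{\lambda'}^n e_w\|^2 \le \|S_{\lambda'}^n\|^2 \cdot \mathrm{card}(W_0)$ when $W_0$ is finite, giving $\limsup_n (\cdots)^{1/2n} \le \lim_n \|S_{\lambda'}^n\|^{1/n} \cdot \lim_n \mathrm{card}(W_0)^{1/2n} = r(S_{\lambda'})$, and when $W_0$ is infinite one replaces $r_\lambda$'s defining $\liminf$ over $W_n$ (also infinite) by the same telescoping — here one uses that $\|S_{\lambda'}^n e_w\|^2 \le \|S_{\lambda'}\|^{2}\|S_{\lambda'}^{n-1}e_{\parent{w}}\|^2 \le \cdots$ to reduce the infinite sum over $W_n$ to the finite quantity $\|S_{\lambda'}^n e_{\mathsf{root}}\|^2 + \cdots$; concretely, $\sum_{w \in W_0}\|S_{\lambda'}^n e_w\|^2 \le (k_{\mathscr T}+1)\|S_{\lambda'}\|^{2k_{\mathscr T}}\|S_{\lambda'}^n e_{\mathsf{root}}\|^2 \cdot$(something) — I expect the paper handles the infinite case by a more careful grouping. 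The main obstacle, then, is precisely controlling the (possibly infinite) sum $\sum_{w \in W_0}\|S_{\lambda'}^n e_w\|^2$ uniformly in $n$ so that its $2n$-th root is asymptotically dominated by $r(S_{\lambda'})$; the finite-dimensional-$E$ case (finitely many $w$) is immediate from the spectral radius formula, and the general case requires leveraging $k_{\mathscr T} < \infty$ to reduce each $W_0$-orbit sum back to a bounded multiple of $\|S_{\lambda'}^n e_{\mathsf{root}}\|^2$ via repeated application of the inequality $\|S_{\lambda'}^{n}e_w\|^2 \le \|S_{\lambda'}\|^{2 n_w}\,\|S_{\lambda'}^{n - n_w}e_{\mathsf{root}}\|^2$ together with Lemma \ref{branch}(ii) (constancy of $\mathrm{card}(\childn{j}{\mathsf{root}})$ for $j \geq k_{\mathscr T}$) to see that only a bounded number of summands genuinely contribute growth.
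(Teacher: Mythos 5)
Your reduction of the sum in \eqref{radius} to $\sum_{w\in W_0}\|S_{\lambda'}^n e_w\|^2$ (grouping $v\in W_n$ by $w=\parentn{n}{v}$, using that the sets $\childn{n}{w}$, $w\in W_0$, partition $W_n$) is correct under the reading that each product has exactly $n$ factors, and in the case $\mathrm{card}(W_0)<\infty$ your bound $\sum_{w\in W_0}\|S_{\lambda'}^n e_w\|^2\le \mathrm{card}(W_0)\,\|S_{\lambda'}^n\|^2$ plus the spectral radius formula does finish the proof; that much is essentially the paper's argument. But the case $\mathrm{card}(W_0)=\infty$ is not a corner you may leave open: it occurs under the standing hypotheses (the tree $\mathscr T_\infty$ of Example \ref{tree-inf} has $k_{\mathscr T}=1$ and $\child{\mathsf{root}}$ infinite), and your proposal explicitly does not close it. Worse, the repair you sketch cannot work: comparing $\|S_{\lambda'}^n e_w\|$ back to the root costs $\|S_{\lambda'}^n e_w\|^2\le c_w^{-2}\|S_{\lambda'}^{n+n_w}e_{\mathsf{root}}\|^2$, where $c_w$ is the coefficient of $e_w$ in $S_{\lambda'}^{n_w}e_{\mathsf{root}}$; since the $c_w$ are square-summable over each generation, $\sum_{w}c_w^{-2}$ diverges whenever a generation is infinite, so the termwise comparison constants are not summable. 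Indeed, with the strict ``$n$ factors'' reading the quantity $\sum_{w\in W_0}\|S_{\lambda'}^n e_w\|^2$ can literally equal $+\infty$ for every $n$ (take $\mathscr T_\infty$ with constant weights along each branch beyond the first generation), so no estimate of the kind you are attempting can succeed.

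The paper's proof avoids the set $W_0$ altogether by anchoring every product at the single vertex $\mathsf{root}$: it treats the weight product attached to $v\in W_n$ as running all the way down to the root (i.e.\ of length $n_v$ rather than $n$), so that by Lemma \ref{facts}(i) and \eqref{disjoint},
\begin{equation*}
\sum_{v\in W_n}\big(\lambda'_v\cdots\big)^2=\sum_{k=n}^{n+k_{\mathscr T}}\|S_{\lambda'}^{k}e_{\mathsf{root}}\|^2=\sum_{k=0}^{k_{\mathscr T}}\|S_{\lambda'}^{n+k}e_{\mathsf{root}}\|^2\le \|S_{\lambda'}^{n}\|^2\sum_{k=0}^{k_{\mathscr T}}\|S_{\lambda'}^{k}e_{\mathsf{root}}\|^2,
\end{equation*}
a sum of only $k_{\mathscr T}+1$ terms regardless of $\mathrm{card}(W_0)$; the constant $M=\sum_{k=0}^{k_{\mathscr T}}\|S_{\lambda'}^{k}e_{\mathsf{root}}\|^2$ is finite precisely because $k_{\mathscr T}$ is, and $\liminf_n(M\|S_{\lambda'}^n\|^2)^{-1/2n}=1/r(S_{\lambda'})$ concludes. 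The missing idea in your proposal is exactly this telescoping to the root (equivalently, the single submultiplicativity step $\|S_{\lambda'}^{n+k}e_{\mathsf{root}}\|\le\|S_{\lambda'}^n\|\,\|S_{\lambda'}^{k}e_{\mathsf{root}}\|$ in place of a vertex-by-vertex bound over $W_0$). Two minor further points: your parenthetical justification that $r(S_{\lambda'})>0$ because $S_{\lambda'}^n e_{\mathsf{root}}\neq 0$ is not valid (quasinilpotent operators have nonzero powers), but it is also unnecessary, since positivity of $r_\lambda$ only needs $r(S_{\lambda'})\le\|S_{\lambda'}\|<\infty$; and the discrepancy between the two readings of \eqref{radius} is genuine only when $W_0$ is infinite, which is why the examples (where $W_0$ is finite) do not distinguish them.
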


\begin{proof}
By Lemma \ref{facts}(i), for any integer $k \geq 0,$
$$\|S^k_{\lambda'}e_{\mathsf {root}}\|^2 = \sum_{v \in \childn{k}{\mathsf {root}}} \big(\lambda'_v \lambda'_{\parent v}\cdots \lambda'_{\parentn{k-1}{v}} \big)^2.$$
It follows from \eqref{disjoint} that 
\beqn
\sum_{v \in W_n}\big(\lambda'_v \lambda'_{\parent v}\cdots \lambda'_{\parentn{k-1}{v}} \big)^2 &=& \sum_{k=n}^{n+k_{\mathscr T}} 
\sum_{v \in \childn{k}{\mathsf {root}}} \big(\lambda'_v \lambda'_{\parent v}\cdots \lambda'_{\parentn{k-1}{v}} \big)^2 \\ &=&
\sum_{k=n}^{n+k_{\mathscr T}} \|S^k_{\lambda'}e_{\mathsf {root}}\|^2 
= \sum_{k=0}^{k_{\mathscr T}} \|S^{n+k}_{\lambda'}e_{\mathsf {root}}\|^2 \\
&\leq & \|S^{n}_{\lambda'}\|^2 \sum_{k=0}^{k_{\mathscr T}} \|S^k_{\lambda'}e_{\mathsf {root}}\|^2.
\eeqn
If we set $M:=\sum_{k=0}^{k_{\mathscr T}}\|S^k_{\lambda'}e_{\mathsf {root}}\|^2$ (which is finite since $k_{\mathscr T} < \infty$) then
by the definition of $r_{\lambda},$
\beqn
r_{\lambda} \geq  \liminf_{n \rar \infty} \Big(M\|S^{n}_{\lambda'}\|^2 \Big)^{-\frac{1}{2n}}  =
\Big(\lim_{n \rar \infty} M^{-\frac{1}{2n}} \Big) \Big(\liminf_{n \rar \infty} \|S^{n}_{\lambda'}\|^{-\frac{1}{n}}\Big) = \frac{1}{r(S_{\lambda'})},
\eeqn 
which completes the proof of the lemma.
\end{proof}
\begin{remark}
If $S_{\lambda}$ is an expansion (that is, $S^*_{\lambda} S_{\lambda} \geq I)$ then $S_{\lambda'}$ is a contraction (that is, $S^*_{\lambda} S_{\lambda} \leq I)$. In this case, $r_{\lambda}$ is at least $1.$
\end{remark}

We need one more fact in the proof of the main result (cf. \cite[Proposition 4.5]{ACJS}).
\begin{lemma}\label{lem1}
Let $S_\lambda \in B(l^2(V))$ be a 
weighted shift on $\mathscr T$. Then $S_\lambda$ is analytic.
\end{lemma}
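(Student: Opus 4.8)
The plan is to show that $\bigcap_{n\geq 0}S_\lambda^n(l^2(V)) = \{0\}$. The key structural fact is the disjoint decomposition $V = \bigsqcup_{n\geq 0}\childn{n}{\mathsf{root}}$ from \eqref{disjoint}, together with Lemma \ref{facts}(i), which tells us that $S_\lambda^k e_u$ is supported on $\childn{k}{u}$. From this, one sees that $S_\lambda^k(l^2(V))$ is contained in the closed span of $\{e_v : v \in V,\ n_v \geq k\}$: indeed any $f \in l^2(V)$ can be written $f = \sum_{u}\inp{f}{e_u}e_u$, and applying $S_\lambda^k$ sends each $e_u$ into $\bigvee\{e_v : v\in\childn{k}{u}\}$, every such $v$ satisfying $n_v = n_u + k \geq k$.

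First I would make the containment precise: fix $g \in \bigcap_{n\geq 0}S_\lambda^n(l^2(V))$ and, for each $k$, write $g = S_\lambda^k f_k$ for some $f_k \in l^2(V)$. Using Lemma \ref{facts}(i) and the fact that distinct $u$'s have disjoint sets $\childn{k}{u}$ (a consequence of \eqref{disjoint} applied within the subtree, or directly from the tree axioms), conclude that $g$ lies in $\mathcal{M}_k := \bigvee\{e_v : n_v \geq k\}$. Hence $g \in \bigcap_{k\geq 0}\mathcal{M}_k$. But for any vertex $v \in V$ we have $n_v < \infty$, so $e_v \notin \mathcal{M}_{n_v+1}$, and therefore $\bigcap_{k\geq 0}\mathcal{M}_k = \{0\}$. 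This forces $g = 0$, proving $S_\lambda$ is analytic.

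A cleaner way to phrase the last step: for $g \in \bigcap_k \mathcal{M}_k$ and any $v \in V$, the coefficient $\inp{g}{e_v}$ vanishes because $g \perp e_v$ whenever $k > n_v$ (since $\mathcal M_k \perp e_v$ in that range), and $\{e_v\}_{v\in V}$ is an orthonormal basis of $l^2(V)$, so $g = 0$. I would also remark that boundedness of $S_\lambda$ is used only to ensure $S_\lambda \in B(l^2(V))$ so that the powers and their ranges make sense; the analyticity itself is purely combinatorial, stemming from the fact that a rooted tree has no vertex of infinite ``generation.''

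The main obstacle, though it is minor, is bookkeeping: one must be careful that the ranges $S_\lambda^k(l^2(V))$ need not be closed, so the argument should work with the (not necessarily closed) range directly — i.e., every element of $S_\lambda^k(l^2(V))$ genuinely has all its Fourier coefficients at vertices $v$ with $n_v < k$ equal to zero — rather than passing to closures prematurely. This is immediate from Lemma \ref{facts}(i) applied termwise and the orthogonality of the supports, but it is the one place where a hurried argument could slip. Everything else is a direct unwinding of the definition of an analytic operator.
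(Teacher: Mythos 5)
Your proposal is correct and is essentially the paper's own argument: the subspaces $\mathcal M_k=\bigvee\{e_v: n_v\geq k\}$ are exactly the spaces $M_k=\bigvee\{e_u: u\in V_k\}$ used in the paper, the containment $\mathrm{ran}\, S_\lambda^k\subseteq \mathcal M_k$ is derived from Lemma \ref{facts}(i) in both cases, and the conclusion follows because every vertex has finite generation so $\bigcap_k\mathcal M_k=\{0\}$. Your remark about not needing closedness of the ranges is consistent with the paper, which likewise only uses the inclusion of the (possibly non-closed) range in the closed subspace $M_k$.
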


\begin{proof}
Put $V_0:=V\ \text{and}\ V_k:=\displaystyle V \setminus 
\cup_{j=0}^{k-1}\mathsf{Chi}^{\langle j\rangle}(\mathsf{root})~(k \geq 1).$ 
Note that $\{V_k\}_{k\geq0}$ is a strictly decreasing 
sequence of sets such that $\displaystyle \cap_{k\geq0}V_k=\emptyset$. Now, 
for all $u\in V$ and all integers $k\geq0$, by Lemma \ref{facts}(i),
\[S_\lambda^k e_u=\displaystyle 
\sum_{v \in \childn{k}{u}} \lambda_v \lambda_{\parent v}\cdots \lambda_{\parentn{k-1}{v}} 
e_v.\]
It follows that
\[\mbox{ran}\,S_\lambda^k \subseteq\bigvee\{e_u:u\in V_k\}:=M_k,\ \text{say}.\]
Also, if $f\in M_k$, then $f(u)=0$ for $u\in V\setminus V_k = 
\cup_{j=0}^{k-1}\mathsf{Chi}^{\langle j\rangle}(\mathsf{root})$. Thus, if
$f \in \displaystyle\cap_{k=0}^{\infty} M_k$, then $f(u)=0$ for
$u \in \displaystyle \cup_{j=0}^{\infty} \mathsf{Chi}^{\langle 
j\rangle}(\mathsf{root}) =V$. That is, $f=0$. Hence
\beqn \{0\} \subseteq \displaystyle \cap_{k = 0}^{\infty}\, \mbox{ran}\, 
S_\lambda^k \subseteq \cap_{k=0}^{\infty}\, M_k =\{0\}. \eeqn This shows that 
$S_\lambda$ is
analytic.
\end{proof}

\begin{proof}[{Proof of Theorem \ref{thm1}}] 
As mentioned earlier, the proof relies on the 
ideas developed in \cite[Sections 1 and 
2]{Shimorin}. 
Let
$f = \sum_{v \in V}f(v)e_v 
\in l^2(V).$ By Lemmas \ref{facts}(ii) and \ref{branch}(iii), 
\beqn
P_ES_{\lambda'}^{*n}f 
&=& \sum_{v \in V} f(v)\lambda'_v 
\lambda'_{\mathsf{par}(v)} \cdots 
\lambda'_{{\mathsf{par}}^{\langle n-1 \rangle}(v)}P_Ee_{{\mathsf{par}}^{\langle 
n \rangle}(v)} \\
&=& \sum_{v \in W_n} f(v)\lambda'_v 
\lambda'_{\mathsf{par}(v)} \cdots 
\lambda'_{{\mathsf{par}}^{\langle n-1 \rangle}(v)}P_Ee_{{\mathsf{par}}^{\langle 
n \rangle}(v)}.
\eeqn
We claim that the $E$-valued series 
\beq \label{Uf}
U_f(z):=\displaystyle \sum_{n\geq0}(P_ES_{\lambda'}^{*n}f)z^n \eeq converges 
absolutely in $E$ on the disc $\mathbb D_{r_{\lambda}}$ for every $f \in l^2(V)$.
By Lemma \ref{branch}(iv), for non-negative integers $m$ and $n,$ $W_n \cap W_m \neq \emptyset$ if and only if $|n-m| \leq k_{\mathscr T}.$ 
It follows that
\beq \label{estimate}
\sum_{\underset{n \geq 0}{v \in W_n}} |f(v)|^2 &=& \sum_{v \in W_0} |f(v)|^2 + \sum_{v \in W_1} |f(v)|^2 + \cdots \nonumber \\ &\leq & 
\sum_{v \in V} (k_{\mathscr T}+1)|f(v)|^2  = (k_{\mathscr T}+1)\|f\|^2.
\eeq
Now
by the Cauchy-Schwarz inequality, for any integer $k \geq 0,$
\beqn
\Big\|\sum_{n = 0}^k (P_ES_{\lambda'}^{*n}f)z^n \Big\|
&\leq & \sum_{\underset{n \geq 0}{v \in W_n}} |f(v)|\lambda'_v 
\lambda'_{\mathsf{par}(v)} \cdots 
\lambda'_{{\mathsf{par}}^{\langle n-1 \rangle}(v)} 
|z|^n \\ &\leq & \Big(\sum_{\underset{n \geq 0}{v \in W_n}} |f(v)|^2 
\Big)^{\frac{1}{2}} \Big( 
\sum_{\underset{n \geq 0}{v \in W_n}} \big(\lambda'_v 
\lambda'_{\mathsf{par}(v)} \cdots 
\lambda'_{{\mathsf{par}}^{\langle n-1 \rangle}(v)}\big)^2|z|^{2n}\Big)^{\frac{1}{2}} 
\\ 
&\overset{\eqref{estimate}} \leq & \sqrt{k_{\mathscr T}+1}~\|f\|\Big( 
\sum_{\underset{n \geq 0}{v \in W_n}} \big(\lambda'_v 
\lambda'_{\mathsf{par}(v)} \cdots 
\lambda'_{{\mathsf{par}}^{\langle n-1 \rangle}(v)}\big)^2|z|^{2n}\Big)^{\frac{1}{2}}.
\eeqn
Since the series on the right hand side converges absolutely on $\mathbb D_{r_{\lambda}}$, 
the claim stands verified.
Thus $U_f$
is holomorphic in the disk $\mathbb{D}_{r_{\lambda}}.$ This allows us to define the map $U : l^2(V) \rar \mathscr H$ by $Uf = U_f,$ where 
$\mathscr H$ denotes the complex vector space of $E$-valued holomorphic 
functions of the form $U_f$. 
By Lemma \ref{lem1}, $S_{\lambda}$ is analytic, and hence by \cite[Lemma 2.2]{Shimorin}, $U$ is injective. Thus the inner-product given by
\[\langle U_f,U_g\rangle=\langle f,g\rangle_{l^2(V)}\ \text{for all}\ f,g\in 
l^2(V)\]
makes $\mathscr H$ an inner-product space. Also, the very definition of the inner-product on 
$\mathscr H$ shows that $U$ is unitary, and hence $\mathscr H$ is a Hilbert 
space. 

Note that for each $f\in E$, 
$U_f(z)=f$. 
We now show that $\mathscr H$ is $z$-invariant. Let 
$U_f\in\mathscr H$. Since $S^*_{\lambda'}S_{\lambda} = I$ and
$P_ES_{\lambda}=0,$ we get
\beqn zU_f(z) 
&=& \displaystyle\sum_{n\geq0}(P_ES_{\lambda'}^{*n}f)z^{n+1} =\sum_{n\geq1} 
(P_ES_{\lambda'}^{*n-1}f)z^n \\ &=& \sum_{n\geq0}(P_ES_{\lambda'}^{*n}S_\lambda 
f)z^n=U_{S_\lambda f}(z)\in\mathscr H.\eeqn
Above expression also verifies that ${\mathscr M}_zU=US_\lambda$, where 
${\mathscr M}_z$ is the 
operator of multiplication by z on $\mathscr H$. 

Part (i) has been recorded on \cite[Pg 154]{Shimorin} (see the discussion following \eqref{rk}).
To see (ii), recall from \eqref{rk} that  
\[\kappa_{\mathscr 
H}(z,w)=\displaystyle\sum_{j,k\geq0}C_{j,k}z^j\overline{w}^k,\]
where $C_{j,k}$ is a bounded linear operator on $E$ given by 
$C_{j,k}=P_ES_{\lambda'}^{*j}S_{\lambda'}^k|_E$. Since ${\ker}
S_{\lambda'}^*={\ker} 
S_\lambda^*=E$, it follows that $P_ES_{\lambda'}^{*j}|_E=0$ for all $j\geq1$. 
Since $C^*_{j, k} = C_{k, j}$, we get
$C_{j,0}=0=C_{0,j}$ for all $j\geq1$. Hence the above expression for 
$\kappa_{\mathscr H}(z,w)$ reduces to 
\[\kappa_{\mathscr 
H}(z,w)=I_E+\displaystyle\sum_{j,k\geq1}C_{j,k}z^j\overline{w}^k.\]
As recorded earlier in \eqref{ker-W0}, $E \subseteq \bigvee \{e_v : v \in W_0\}$ (see also \eqref{Wn} for the definition of $W_n$).
It follows that  
\[S_{\lambda'}^{*j}S_{\lambda'}^kE\subseteq \bigvee\big\{
e_v:v \in W_{k-j}\big\},\] and
therefore 
$S_{\lambda'}^{*j}S_{\lambda'}^kE$ is orthogonal to $E$ if 
$|j-k|>k_\mathscr T$. Thus $C_{j,k}=0$ 
if 
$|j-k|>k_\mathscr T$.  This proves (ii). 

To prove (iii), note that by Lemma \ref{lem1}, $S_{\lambda'}$ is 
analytic. Therefore, by \cite[Proposition 2.7]{Shimorin}, 
\beqn
l^2(V) = \bigvee_{n \geq 0} S^n_{\lambda}(E).
\eeqn
Since ${\mathscr M}_z$ is 
unitarily equivalent to 
$S_\lambda$ and $\ker S^*_{\lambda}=E$, it follows that 
\beqn
\mathscr H = \bigvee_{n \geq 0} {\mathscr M}^n_{z}(\mathscr E).
\eeqn
This is precisely (iii).  

Finally, since $U$ is unitary and 
$\{e_u:u\in V\}$ is an orthonormal basis of $l^2(V)$, $\{U_{e_u}:u\in V\}$ is 
an 
orthonormal basis of $\mathscr H$. Note that
\beq \label{onb} U_{e_u}(z)=\displaystyle\sum_{k\geq0}(P_ES_{\lambda'}^{*k}e_u)z^k=\sum_{
0\leq k\leq n_u}(P_ES_{\lambda'}^{*k}e_u)z^k\eeq
(see the discussion following \eqref{disjoint} for the definition of $n_u$).
If $n_u \leq k_{\mathscr T}$ then clearly $U_{e_u}$ has at most $k_{\mathscr 
T}+1$ number of non-zero coefficients. Suppose now that $n_u>k_\mathscr T$. 
By Lemma \ref{facts}(ii), $S_{\lambda'}^{*k}e_u = \alpha_{\lambda, k} e_{\mathsf{par}^{\langle 
k \rangle}(u)}$ for some scalar $\alpha_{\lambda, k}.$ Let $k$ be an integer 
such that $0\leq k \leq n_u-k_\mathscr T-1$.
Then $n_{\mathsf{par}^{\langle 
k \rangle}(u)} = n_u - k  \geq k_{\mathscr T} + 1,$ and hence 
${\mathsf{par}^{\langle 
k \rangle}(u)} \notin W_0.$
By Lemma 
\ref{branch}(i), 
$$P_ES_{\lambda'}^{*k}e_u=\alpha_{\lambda, k} P_Ee_{\mathsf{par}^{\langle 
k \rangle}(u)}=0.$$
Hence total 
number of possible non-zero coefficients in above expression of $U_{e_u}$ are 
$n_u+1-(n_u-k_\mathscr T)=k_\mathscr T+1$. Thus for each $u\in V$, $U_{e_u}$ 
is a polynomial in $z$ with at most $k_\mathscr T+1$ non-zero coefficients. 
This 
completes the proof of the theorem.
\end{proof}
\begin{remark}
The proof above actually shows that the set of analytic bounded point evaluation of $\mathscr H$ contains the disc $\mathbb D_{r_{\lambda}}$ (refer to \cite[Chapter II, Section 7]{Co}). 
\end{remark}

We conclude this section with a brief discussion on a possible line of investigation. Note that the proof of Theorem \ref{thm1} relies on the notion of Cauchy dual operator, present already in Shimorin's  construction of an analytic model for a left inertible analytic operator \cite{Shimorin}. In case $\dim E=1$, the analytic model can be replaced by the model in which the weighted shift operator can be realized as the operator of multiplication by $z$ on a Hilbert space of formal power series \cite{S} (refer also to \cite{K}). It would be interesting to find a counter-part of the later model in case $\dim E > 1.$  In this regard, the authors would like to draw reader's attention to \cite[Theorems 2.12 and 2.13]{SV} in which it is shown that the adjoint of an arbitrary cyclic operator can be modelled as a backward shift on a reproducing kernel Hilbert space.

\section{Examples}

In this section, we illustrate Theorem \ref{thm1} with the help of several interesting examples. In particular, we see that various directed trees (discrete structures) render to analytic multi-diagonal kernels (analytic structures). These include mainly tridiagonal and  pentadiagonal kernels (the reader is referred to \cite{AM}, \cite{AM-1}, \cite{AFM}, \cite{AMSS}  for a systematic study of scalar-valued and matrix-valued kernels of finite bandwidth; refer
also to \cite{KM} for a new class of matrix-valued kernels on the unit disc arising in the classification problem of homogeneous operators). 
All important examples are summarized in the form of a table at the end of this section.

\begin{example}[(Tridiagonal)] \label{tridiagonal}
Consider the directed tree $\mathscr T_2$ with set of vertices 
$$V:=\{(0,0)\}\cup\{(1,i), (2,i):i\geq1\}$$ and
$\mathsf{root}=(0,0)$. We further require that $\mathsf{Chi}(0,0)=\{(1,1),(2,1)\}$ and 
\[\mathsf{Chi}(1,i)=\{(1,i+1)\},\
\mathsf{Chi}(2,i)=\{(2,i+1)\},\ \text{for all}\ i\geq1. \]
Let $S_{\lambda}$ be a left-invertible weighted shift on $\mathscr T_2.$
It is easy to see from \eqref{kernel} that
$$E:= \ker S_\lambda^* = \{\alpha e_{(0,0)} + \beta(\lambda_{(2, 1)}e_{(1,1)}- 
\lambda_{(1, 1)}e_{(2,1)}) : \alpha, \beta \in \mathbb C\}.$$ 
Also, $V_\prec=\{(0,0)\}$ and $k_{\mathscr T_2}=1$. Therefore, by Theorem
\ref{thm1}, $\kappa_{\mathscr H}(\cdot,\cdot)$ takes the form
\[\kappa_{\mathscr H}(z,w)=I_E+\displaystyle\sum_{\underset{|j-k|\leq
1}{j,k\geq1}}C_{j,k}z^j\overline{w}^k~(z, w \in \mathbb D_{r_{\lambda}}),\]
where $C_{j,k}$ is given by 
$C_{j,k}=P_ES^{*j}_{\lambda'}S_{\lambda'}^k|_E~(j, k =1, 2, \cdots)$.
Let us find an explicit expression for the radius of convergence $r_{\lambda}$ for $S_{\lambda}$.
Note that $$W_n = \{(1, n), (2, n), (1, n+1), (2, n+1)\}$$ for $n \geq 1,$  and 
hence by \eqref{radius},
\beq \label{tri-rad}
r_{\lambda}=\liminf_{n \rar \infty} \left(\sum_{j=1}^2 \left[\big(\lambda'_{(j, n)} \cdots  
\lambda'_{(j, 1)}\big)^2 + \big(\lambda'_{(j, n+1)} \cdots  
\lambda'_{(j, 2)}\big)^2 \right]\right)^{-\frac{1}{2n}},
\eeq
where the sequence 
$\{\lambda'_{(j, n)}\}_{n \geq 1}$ for $j=1, 2$ is given by 
\beq \label{seq-dual}
\lambda'_{(j, n)} = \begin{cases} 
\frac{\lambda_{(j, n)}}{\lambda^2_{(1, n)} + \lambda^2_{(2, n)}}~\mbox{if~}n=1 \\
\frac{1}{\lambda_{(j, n)}}~\mbox{if~}n \geq 2.\end{cases}
\eeq
In this case, the reproducing kernel $\kappa_{\mathscr H}(\cdot,\cdot)$ is
{\it tridiagonal}. Finally, we note that the weight sequence $\lambda$ can be chosen so that $\kappa_{\mathscr H}(\cdot,\cdot)$ is not diagonal. In fact, a routine calculation shows that \beq \label{C21} C_{2, 1}(\lambda_{(2, 1)}e_{(1,1)}- 
\lambda_{(1, 1)}e_{(2,1)}) = \frac{\lambda_{(1, 1)}\lambda_{(2, 1)}}{\lambda^2_{(1, 1)} + \lambda^2_{(2, 1)}}\left(\frac{1}{\lambda^2_{(1, 2)}} - \frac{1}{\lambda^2_{(2, 2)}}\right)e_{(0, 0)},\eeq which is clearly non-zero in case $\lambda_{(1, 2)} \neq \lambda_{(2, 2)}.$ 
\end{example}

The tridiagonal kernel $\kappa_{\mathscr H}$ appearing in Example \ref{tridiagonal} takes a concrete form for a family of weighted shifts $S_{\lambda}$.

\begin{proposition} Let $\mathscr T_2$ and $S_{\lambda}$ be as discussed in Example \ref{tridiagonal}. Let $x:=e_{(0, 0)},  y:=\lambda_{(2, 1)}e_{(1,1)}- 
\lambda_{(1, 1)}e_{(2,1)}.$ Assume that the weight sequence $\{\lambda_{(j, i)} : i \geq 1, j=1, 2\}$ of $S_{\lambda}$ satisfies the following:
\begin{enumerate}
\item[(i)] $\lambda_{(1, 1)} = \lambda_{(2, 1)}$,
\item[(ii)] $\lambda_{(1, 2)} \neq \lambda_{(2, 2)}$,
\item[(iii)] 
$\lambda_{(1, n)} \cdots \lambda_{(1, 2)} = \lambda_{(2, n)} \cdots \lambda_{(2, 2)}$ for every integer $n \geq 3.$
\end{enumerate}
Then the reproducing kernel $\kappa_{\mathscr H}$ takes the form 
\beqn
\kappa_{\mathscr H}(z,w) &=& I_E + \alpha(x \otimes y\, z^2\overline{w} + y \otimes x\, z\overline{w}^2) \\ &+&     \sum_{k=1}^{\infty} \Big({\alpha_k}\, x \otimes x + {\alpha_{k+1}}\, y \otimes y \Big) z^k \overline{w}^k
~(z, w \in \mathbb D_{r_{\lambda}}),
\eeqn
where $\alpha:=\frac{\lambda^2_{(1, 1)}}{\|y\|^4}\left({\lambda^{-2}_{(1, 2)}} - {\lambda^{-2}_{(2, 2)}}\right)$ is a non-zero real number, and 
\beqn \alpha_{k}:= \begin{cases}
{
\|y\|^{-2}}~\mbox{if~}k=1,\\
{\lambda^2_{(1, 1)}}{\|y\|^{-4}}\Big( {\lambda^{-2}_{(1, 2)}} + {\lambda^{-2}_{(2, 2)}} \Big)~\mbox{if~}k=2,\\
{\|y\|^{-2}} \big({\lambda_{(1, k)} \cdots \lambda_{(1, 2)}}\big)^{-2}~\mbox{if~} k \geq 3.\end{cases}
\eeqn 
\end{proposition}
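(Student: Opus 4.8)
The plan is to compute the operators $C_{j,k}=P_ES^{*j}_{\lambda'}S_{\lambda'}^k|_E$ explicitly on the orthonormal-up-to-scalars basis $\{x,y\}$ of $E$ and then assemble them into the stated form of $\kappa_{\mathscr H}$ via Theorem \ref{thm1}(ii). First I would record the dual weights: by \eqref{seq-dual}, together with hypothesis (i) which forces $\|y\|^2 = \lambda^2_{(1,1)}+\lambda^2_{(2,1)} = 2\lambda^2_{(1,1)}$, we have $\lambda'_{(j,1)} = \lambda_{(j,1)}/\|y\|^2$ and $\lambda'_{(j,n)} = 1/\lambda_{(j,n)}$ for $n\ge 2$. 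Then I would apply Lemma \ref{facts}(i) to compute $S^k_{\lambda'}x$ and $S^k_{\lambda'}y$ for each $k\ge 1$, tracking the two ``branches'' $j=1,2$ separately; hypothesis (iii) is exactly what makes the products along the two branches coincide for $k\ge 3$, so that $S^k_{\lambda'}y$ collapses to a scalar multiple of a single basis direction, and I would use it to simplify. Next, since $C_{j,k}$ vanishes for $|j-k|>k_{\mathscr T_2}=1$ by Theorem \ref{thm1}(ii), the only terms to compute are the diagonal ones $C_{k,k}$ and the first off-diagonals $C_{k+1,k}$, $C_{k,k+1}$ (the latter being adjoints of the former).

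For the diagonal terms, I would compute $\langle C_{k,k}x,x\rangle_E = \|S^k_{\lambda'}x\|^2 - \|\,(\text{component of }S^k_{\lambda'}x\text{ outside }E)\|^2$, or more directly $\langle S^k_{\lambda'}x, S^k_{\lambda'}x\rangle$ projected onto $E$; similarly for $y$, and check that the cross term $\langle C_{k,k}x,y\rangle$ vanishes (this should follow from the mutual orthogonality of the two branches in $l^2(V)$ plus hypothesis (iii)). This is where the formula for $\alpha_k$ emerges, with the three cases $k=1$, $k=2$, $k\ge 3$ reflecting which regime of dual weights is in play: for $k=1$ only $\lambda'_{(j,1)}$ appears, for $k=2$ both $\lambda'_{(j,1)}$ and $\lambda'_{(j,2)}$, and for $k\ge3$ the tail $\lambda'_{(j,k)}\cdots\lambda'_{(j,2)} = (\lambda_{(j,k)}\cdots\lambda_{(j,2)})^{-1}$ which by (iii) is independent of $j$. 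I would organize the bookkeeping so that the coefficient of $x\otimes x$ in the $z^k\overline{w}^k$ term turns out to be $\alpha_k$ and the coefficient of $y\otimes y$ turns out to be $\alpha_{k+1}$ — the index shift here comes from the fact that, roughly, acting on $y$ one picks up an extra ``level'' of weights compared to acting on $x$, because $y$ lives at level $1$ while $x=e_{\mathsf{root}}$ lives at level $0$.

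For the off-diagonal term, I would reuse the calculation already done in \eqref{C21}, which under hypothesis (i) simplifies to $C_{2,1}y = \alpha \|y\|^2 \cdot (\text{scalar})\, x$, and more generally check that $C_{k+1,k}$ vanishes for $k\ge 2$ — intuitively because beyond the single branching vertex the tree is just two disjoint rays, so no further ``mixing'' between $x$ and $y$ occurs, and by (iii) the two rays carry identical weight products, killing the would-be off-diagonal contributions. Thus only $C_{2,1}$ (and its adjoint $C_{1,2}$) survives off the diagonal, contributing the $\alpha(x\otimes y\,z^2\overline w + y\otimes x\,z\overline w^2)$ term; I would also verify that $\alpha$ as defined is nonzero precisely because of hypothesis (ii). Finally I would substitute everything into \eqref{eq2} and collect terms. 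The main obstacle I anticipate is the careful index-tracking in the diagonal computation — making sure the telescoping products of dual weights are matched correctly against $\alpha_k$ versus $\alpha_{k+1}$, and confirming that all the cross terms $\langle C_{k,k}x,y\rangle$ and the higher off-diagonals genuinely vanish rather than merely appearing to; hypothesis (iii) will have to be invoked at exactly the right points, and the $k=1,2$ boundary cases will need separate explicit checks.
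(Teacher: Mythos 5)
Your plan is essentially the paper's own proof: both arguments reduce everything to the entries $C_{j,k}=P_ES^{*j}_{\lambda'}S_{\lambda'}^k|_E$ with $|j-k|\le 1$, evaluate them on $x$ and $y$ using the dual weights \eqref{seq-dual} and the diagonality of $S^{*k}_{\lambda'}S^k_{\lambda'}$ (equivalently Lemma \ref{facts}), invoke (iii) to make $S^{*k}_{\lambda'}S^k_{\lambda'}y$ proportional to $y$ so that $C_{k+1,k}=0$ for $k\ge 2$, and keep only $C_{2,1}=\alpha\, x\otimes y$ from \eqref{C21} off the diagonal. One small correction: $\langle C_{k,k}x,x\rangle_E=\langle S^{*k}_{\lambda'}S^k_{\lambda'}x,x\rangle=\|S^k_{\lambda'}x\|^2$ exactly, with no subtraction of any component outside $E$, since $x\in E$ and $P_E$ is self-adjoint; your alternative phrasing is the right one.
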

\begin{proof}
As seen in Example \ref{tridiagonal}, $\kappa_{\mathscr H}(\cdot,\cdot)$ is given by
\[\kappa_{\mathscr H}(z,w)=I_E+\displaystyle\sum_{\underset{|j-k|\leq
1}{j,k\geq1}}C_{j,k}z^j\overline{w}^k~(z, w \in \mathbb D_{r_{\lambda}}).\]
Since $C_{2,1}e_{(0, 0)}=0$, by \eqref{C21} and (i), $C_{21}$ is the rank one operator $\alpha\,x \otimes y.$
Note that $\alpha \neq 0$ in view of (ii).
Also, $C_{1, 2} = C^*_{2, 1} = \alpha\,y \otimes x.$

We claim that $C_{k, k+1}=0=C_{k+1, k}$ for all integers $k \geq 2.$ Let us first compute the diagonal operator $S^{*k}_{\lambda'}S^k_{\lambda'}$. Fix an integer $k \geq 2.$ It may be concluded from \cite[Lemma 6.1.1]{Jablonski} that 
\beq \label{power} S^{*k}_{\lambda'}S^k_{\lambda'}e_{u} = \sum_{v \in \childn{k}{u}} \big(\lambda'_v \lambda'_{\parent v}\cdots \lambda'_{\parentn{k-1}{v}} \big)^2e_u.\eeq
Since $S^*_{\lambda'}x=0,$ it follows from \eqref{power} that $C_{k+1, k}x = 0$. Note further that
\beqn
S^{*k}_{\lambda'}S^k_{\lambda'}y &=& \lambda_{(2, 1)}S^{*k}_{\lambda'}S^k_{\lambda'}e_{(1, 1)} - \lambda_{(1, 1)}
S^{*k}_{\lambda'}S^k_{\lambda'}e_{(2, 1)}\\
&=&
\lambda_{(2, 1)}\big(\lambda'_{(1, k+1)} \cdots \lambda'_{(1, 2)}\big)^2 e_{(1, 1)} - \lambda_{(1, 1)}
\big(\lambda'_{(2, k+1)} \cdots \lambda'_{(2, 2)}\big)^2e_{(2, 1)} \\
&\overset{\mbox{(iii)}}=& \frac{1}{\big(\lambda_{(1, k+1)} \cdots \lambda_{(1, 2)}\big)^2}\, y,
\eeqn
where in the last step we used \eqref{seq-dual}. 
This immediately yields that $C_{k+1, k}y = 0$.  This completes the verification of the claim. 
The above calculation also shows that 
$C_{k, k}y = \alpha_{k+1} \|y\|^2 y$ for all integers $k \geq 2.$
Since $S^{*}_{\lambda'}S_{\lambda'}y = \frac{\lambda_{(2, 1)}}{\lambda^2_{(1, 2)}} e_{(1, 1)} - \frac{\lambda_{(1, 1)}}{\lambda^2_{(2, 2)}} e_{(2, 1)},$ we have
\beqn
C_{1, 1}y &=& P_{E}\left(\frac{\lambda_{(2, 1)}}{\lambda^2_{(1, 2)}} e_{(1, 1)} - \frac{\lambda_{(1, 1)}}{\lambda^2_{(2, 2)}} e_{(2, 1)}\right)\\ &=& \frac{\lambda_{(2, 1)}}{\lambda^2_{(1, 2)}} \inp{e_{(1, 1)}}{{y}}\frac{y}{\|y\|^2} - \frac{\lambda_{(1, 1)}}{\lambda^2_{(2, 2)}}\inp{e_{(2, 1)}}{y}\frac{y}{\|y\|^2} \\
&=& \left(\frac{\lambda^2_{(2, 1)}}{\lambda^2_{(1, 2)}} + \frac{\lambda^2_{(1, 1)}}{\lambda^2_{(2, 2)}}\right)\frac{y}{\|y\|^2} \overset{\mbox{(i)}}= {\alpha_2}{\|y\|^2} y.
\eeqn
To compute the diagonal entry $C_{k, k}$,  by \eqref{power}, for any integer $k \geq 1,$
\beqn C_{k,k}x = \sum_{j=1}^2 \big(\lambda'_{(j, k)} \cdots \lambda'_{(j, 1)}\big)^2 x =\alpha_k x,
\eeqn
where we used \eqref{seq-dual} and (iii).
Now it is easy to see that the rank two operator $C_{k, k}$ is  given by
$C_{k,k}=
\Big({\alpha_k}\, x \otimes x + {\alpha_{k+1}}\, y \otimes y \Big)
$ for all integers $k \geq 1.$
\end{proof}
\begin{example}
The preceding proposition is applicable to $S_{\lambda}$ with weights 
$\lambda_{(1, 1)}=\lambda_{(2, 1)}= \lambda_{(1, 2)}=1, \lambda_{(2, 2)}=\sqrt{2}=\lambda_{(1, 3)},$ and $\lambda_{(2, 3)}=1=\lambda_{(j, i)}$ for $i \geq 4$ and for $j=1, 2.$
In this case, $\alpha =\frac{1}{8}, \alpha_1 = \frac{1}{2}$, $\alpha_2 = \frac{3}{8}$ and $\alpha_k = \frac{1}{8}$ for all integers $k \geq 3.$
Thus the reproducing kernel $\kappa_{\mathscr H}$ takes the form 
\beqn
\kappa_{\mathscr H}(z,w) &=& I_E + \frac{1}{8}\big(x \otimes y\, z^2\overline{w} + y \otimes x\, z\overline{w}^2 \big) +
\frac{1}{2}\Big(x \otimes x + \frac{3}{4}y \otimes y\Big) z \overline{w} \\ &+& \frac{1}{8}\Big(  3 x \otimes x + y \otimes y \Big) z^2 \overline{w}^2  + \frac{1}{8} \sum_{k=3}^{\infty}  \big(x \otimes x + y \otimes y\big) z^k \overline{w}^k
~(z, w \in \mathbb D_{r_{\lambda}}),
\eeqn  
where, in view of \eqref{tri-rad}, $r_{\lambda}$ can be easily seen to be equal to $1$. 
Also, one may easily deduce from the proof of Theorem \ref{thm1}(iv) that for all integers $i \geq 1$, 
\beqn U_{e_{(j, i)}}(z)=(P_ES_{\lambda'}^{*i-1}e_{(j, i)})z^{i-1} + (P_ES_{\lambda'}^{*i}e_{(j, i)})z^{i},~j=1, 2.\eeqn
It is now easy to see that the orthonormal basis for the reproducing kernel Hilbert space $\mathscr H$ associated with $\kappa_{\mathscr H}$ is given by 
\beqn
\{x, p(z), zp(z)\} \cup \left \{\frac{1}{\sqrt{2}}z^{k-1}p(z)\right \}_{k \geq 3} \cup \{q(z)\} \cup \left \{\frac{1}{\sqrt{2}}z^{k-1}q(z)\right \}_{k \geq 2},
\eeqn
where $p(z) = \frac{1}{2}(y+ x z)$ and $q(z) = \frac{1}{2}(xz -y)$ are linear $E$-valued polynomials.
\end{example}

\begin{example}[(Pentadiagonal)] \label{pentadiagonal}
Consider the directed tree $\mathscr T_3$ with set of vertices $V=\{(0,0),
(1,1)\}\cup\{(2,i), (3,i):i\geq1\}$ and $\mathsf{root}=(0,0)$.
We further require that $\mathsf{Chi}(0,0)=\{(1,1)\}$, $\mathsf{Chi}(1,1)=\{(2,1),(3,1)\}$ and 
\[\mathsf{Chi}(2,i)=\{(2,i+1)\},\
\mathsf{Chi} (3,i)=\{(3,i+1)\},\ \text{for all}\ i\geq1. \]
Let $S_{\lambda}$ be a left-invertible weighted shift on $\mathscr T_3.$
As in the preceding example, one can see that
$$E:= \ker S_\lambda^* = \{\alpha e_{(0,0)} + \beta(\lambda_{(3, 1)}e_{(2,1)}- 
\lambda_{(2, 1)}e_{(3,1)}) : \alpha, \beta \in \mathbb C\}.$$
Also $V_\prec=\{(1,1)\}$ and $k_{\mathscr T_3}=2$. Therefore, from Theorem
\ref{thm1}, $\kappa_\mathcal{H}(\cdot,\cdot)$ takes the form
\[\kappa_\mathcal{H}(z,w)=I_E+\displaystyle\sum_{\underset{|j-k|\leq
2}{j,k\geq1}}C_{j,k}z^j\overline{w}^k~(z, w \in \mathbb D_{r_{\lambda}}).\]
Moreover, for $j\geq1$, $S_{\lambda'}^jE \subseteq \mbox{span}~ \Big\{e_v : v
\in \mathsf{Chi}^{\langle j \rangle} \big\{(0,0), (2,1), (3,1) \big\}\Big\}$.
Therefore, 
$$S_{\lambda'}^{*j+1} S_{\lambda'}^j E \subseteq \mbox{span}~
\Big\{e_v : v \in \mathsf{par} \big\{(0,0), (2,1), (3,1)
\big\}\Big\}= \mbox{span}~\{e_{(1,1)}\},$$ 
which gives that
$P_ES^{*j+1}_{\lambda'}S_{\lambda'}^j|_E=0.$ 
Thus $C_{j,k}=0$ if $|j-k|=1$. Therefore from above,
$\kappa_\mathcal{H}(\cdot,\cdot)$ becomes
\[\kappa_\mathcal{H}(z,w)=I_E+\displaystyle\sum_{\underset{|j-k|=0,
2}{j,k\geq1}}C_{j,k}z^j\overline{w}^k~(z, w \in \mathbb D_{r_{\lambda}}).\]
Since $W_n = \{(2, n-1), (3, n-1), (2, n), (3, n), (2,
n+1), (3, n+1)\}$ for $n 
\geq 2,$ the radius of convergence $r_{\lambda}$ for $S_{\lambda}$ is given by \beqn 
\liminf_{n \rar \infty} \left(\sum_{j=2}^3 \left[\big(\lambda'_{(j, n-1)} \cdots  
\lambda'_{(1, 1)}\big)^2 + \big(\lambda'_{(j, n)} \cdots  
\lambda'_{(j, 1)}\big)^2 + \big(\lambda'_{(j, n+1)} \cdots  
\lambda'_{(j, 2)}\big)^2 \right]\right)^{-\frac{1}{2n}}. 
\eeqn
In this case, the reproducing kernel $\kappa_\mathcal{H}(\cdot,\cdot)$ is
{\it pentadiagonal}.
\end{example}

%

The invariant $k_{\mathscr T}$ may be bigger than $\dim E$ as shown below.
\begin{example}[(Septadiagonal)] Consider the directed tree $\mathscr T_4$ with set of vertices $V=\{(0,0),
(1, 1), (2, 2)\}
\cup\{(3,i), (4,i):i\geq1\}$ and $\mathsf{root}=(0,0)$. 
We further require that
$\mathsf{Chi}(0,0)=\{(1,1)\}$, $\mathsf{Chi}(1,1)=\{(2,2)\}$, $\mathsf{Chi}(2,2)=\{(3,1), (4, 1)\}$, and 
\[\mathsf{Chi}(3,i)=\{(3,i+1)\},\
\mathsf{Chi} (4,i)=\{(4,i+1)\},\ \text{for all}\ i\geq1. \]
It is easy to see that $\dim\, \ker S^*_{\lambda} = 2, k_{\mathscr T_4}=3$.
The kernel $\kappa_{\mathscr H}$ in this example is septadiagonal. 
We leave the details to the reader. 
\end{example}

The main result also applies to a directed tree which is not locally finite. 
\begin{example} \label{tree-inf}
Consider the directed tree $\mathscr T_\infty$ with set of vertices
$V=\{(i,j): i, j \geq 0\},$ and $\mathsf{root}=(0,0)$. We further require that
\begin{center}
$\mathsf{Chi}(i,j)=\begin{cases}
                   \{(1,k) : k \geq 0\}& \text{if}\ (i,j) = \mathsf{root},\\
                    \{(i+1, j)\}& \text{otherwise}.
                  \end{cases}$

\end{center}
Let $S_\lambda$ be a bounded left-invertible weighted shift on $\mathscr
T_\infty$. Then $E=\ker S_\lambda^*$ is of infinite dimension. Also
$V_\prec=\{\mathsf{root}\}$, and hence $k_{\mathscr T_\infty}=1$.
By Theorem \ref{thm1}, the reproducing kernel 
$\kappa_\mathscr
H$ is tridiagonal.
\end{example}
\begin{remark} \label{direct-sum}
In general, $S_{\lambda}$ is not unitarily equivalent to orthogonal direct sum of unilateral weighted shifts. To see this, consider the weighted shift $S_{\lambda}$ on $\mathscr T_{\infty},$ and
suppose that $S_{\lambda}$ is unitarily equivalent to direct sum $T:=\oplus_{i=1}^{\infty}T_i$ of unilateral weighted shifts $T_i.$ Choose weights of $S_{\lambda}$ such that $\lambda_{(2, 0)} \neq \lambda_{(2, 1)}.$ In this case,
$$\inp{S^2_{\lambda}g_1}{S_{\lambda}g_2} = \lambda_{(1, 0)}\lambda_{(1, 1)}(\lambda^2_{(2, 0)} - \lambda^2_{(2, 1)}) \neq 0,$$ 
where $g_1 = e_{\mathsf{root}}$ and $g_2 = \lambda_{(1, 1)}e_{(1, 0)} - \lambda_{(1, 0)}e_{(1, 1)}$ belong to $\ker S^*_{\lambda}$.
However, $\inp{T^mX}{T^nY} = 0$ for any $X, Y \in \ker T^*$ and for any positive integers $m, n$ such that $m \neq n.$ 
\end{remark}

%


{\small
\begin{table}[H]
\caption{}
\begin{center}
\begin{tabular}{| c | c | c | c |}
\hline
Directed Tree  $\mathscr T$ & Dimension of $\ker S^*_{\lambda}$  &
$k_{\mathscr T}$  & Form of $\kappa_{\mathscr H}(z, w)$ \\ \hline
$\mathscr T_1$ & $1$ & $0$ & \mbox{diagonal} \\ \hline 
$\mathscr T_2$ & $2$ & $1$ & \mbox{tridiagonal} \\ \hline
$\mathscr T_{3}$ & $2$ & $2$ & \mbox{pentadiagonal} \\ \hline
$\mathscr T_{4}$ & $2$ & $3$ & \mbox{septadiagonal} \\ \hline
$\mathscr T_{\infty}$ & $\infty$ & $1$ & \mbox{tridiagonal}\\ \hline

\end{tabular}
\end{center}

\end{table}
}

\section{Spectral Picture of $S_{\lambda}$}

In this section, we use analytic model constructed in Sections 2 and 3 to discuss spectral theory of weighted shifts $S_{\lambda}$ on rooted directed trees. This part has an overlap with \cite[Theorems 2.1 and 2.3]{C}, 
where the spectral picture of certain weighted composition operators is described. However,
the conclusion of (i)-(iii) of Theorem \ref{spectral} can not be deduced from the aforementioned results of \cite{C} as the directed trees considered in this part need not be locally finite. On the other hand, in the context of rooted directed trees, weighted shifts always have connected spectrum. This is in contrast with \cite[Example 5]{C}, where a composition operator with disconnected spectrum has been constructed.
Positively, the power of analytic model comes into the picture while computing the point spectra of $S_{\lambda}$ and $S^*_{\lambda}$. In this regard, the rather technical proof of \cite[Theorem 2.1]{C} 
should be compared with that of (i) and (ii) of Theorem \ref{spectral}. 

Before we state the main result of this section, we recall a couple of known facts about $S_{\lambda}$. 

Any weighted shift $S_{\lambda}$ on a directed tree is {\it circular} \cite[Theorem 3.3.1]{Jablonski}: 
{\it For every $\theta \in \mathbb R$, there exists a unitary $U_\theta$ on $l^2(V)$ such that $U_\theta S_{\lambda} = e^{i \theta} S_{\lambda} U_\theta$.} An immediate consequence of this shows that all spectral parts of $S_{\lambda}$ have circular symmetry about $0$ \cite[Corollary 3.3.2]{Jablonski}.



Here is the statement of the main result of this section.
\begin{theorem} \label{spectral}
Let $S_\lambda \in B(l^2(V))$ be a left-invertible weighted shift on $\mathscr T$ and let $E:={\ker} S_\lambda^*$.
Then we have the following.
\begin{enumerate}
\item[(i)] The point spectrum $\sigma_p(S_\lambda)$ of $S_{\lambda}$ is empty.
\item[(ii)] If $r_\lambda$ is the radius of convergence for $S_{\lambda}$ then $$\mathbb D_{r_\lambda} \subseteq \sigma_p(S^*_{\lambda}) \subseteq \sigma(S_{\lambda}) = 
 \overline{\mathbb D}_{r(S_{\lambda})}.$$ 
 \item[(iii)] 
 $\bigvee\{\ker (S_\lambda^*- w): w \in \mathbb D_\epsilon\}=l^2(V)$ for every positive number $\epsilon$.
\end{enumerate}
If, in addition, $E$ is finite dimensional then
\begin{enumerate}
\item[(iv)] $\sigma_{ap}(S_{\lambda})=\sigma_e(S_{\lambda})$ is a union of at most $\dim E$ number of annuli centered at the origin.
\item[(v)] 
the Fredholm index $\mbox{ind}\, (S_{\lambda}-w)$ of $S_{\lambda}-w$ is at least $-\dim E$ on any connected component of $\mathbb C \setminus \sigma_e(S_{\lambda})$.
Moreover, $\mbox{ind}\, (S_{\lambda}-w)$ is exactly $-\dim E$ on the connected component of $\mathbb C \setminus \sigma_e(S_{\lambda})$ that contains $0$. 
\item[(vi)] for any positive integer $k,$ $$\dim \big(\ker S^{*k}_\lambda/\ker S^{*k-1}_\lambda \big)=\dim E.$$
\end{enumerate}
\end{theorem}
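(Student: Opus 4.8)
The plan is to exploit the analytic model $({\mathscr M}_z, \kappa_{\mathscr H}, \mathscr H)$ of Theorem \ref{thm1} throughout, transferring each spectral question about $S_\lambda$ to a question about $\mathscr M_z$ on the reproducing kernel Hilbert space $\mathscr H$ of $E$-valued holomorphic functions on $\mathbb D_{r_\lambda}$. For (i), if $S_\lambda f = wf$ for some nonzero $f$, then $\mathscr M_z g = wg$ in $\mathscr H$ for the corresponding $g = Uf$; since elements of $\mathscr H$ are holomorphic functions, $zg(z) = wg(z)$ forces $g \equiv 0$, so $\sigma_p(S_\lambda) = \emptyset$. For the point spectrum of $S_\lambda^*$ in (ii): for each $w \in \mathbb D_{r_\lambda}$ and each $g \in E$, the vector $\kappa_{\mathscr H}(\cdot, w)g$ lies in $\mathscr H$ by Theorem \ref{thm1}(i), and the reproducing property gives $\mathscr M_z^* \kappa_{\mathscr H}(\cdot, w)g = \overline{w}\,\kappa_{\mathscr H}(\cdot, w)g$ (the standard computation: $\langle \mathscr M_z^*\kappa(\cdot,w)g, h\rangle = \langle \kappa(\cdot,w)g, zh\rangle = \overline{w}\langle h(w), g\rangle_E$, wait---one must be careful; the eigenvalue of $\mathscr M_z^*$ at the kernel function at $w$ is $\overline{w}$, and since spectra are circularly symmetric this gives the disc). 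Because $\kappa_{\mathscr H}(\cdot,w)g \neq 0$ (its value at $w$ is $\kappa_{\mathscr H}(w,w)g$, and $\kappa_{\mathscr H}(w,w) \geq I_E$ from \eqref{eq2} hence is invertible), we get $\mathbb D_{r_\lambda} \subseteq \sigma_p(S_\lambda^*)$ after applying circular symmetry. The inclusion $\sigma_p(S_\lambda^*) \subseteq \sigma(S_\lambda)$ is trivial, and $\sigma(S_\lambda) = \overline{\mathbb D}_{r(S_\lambda)}$ follows from circular symmetry together with the fact that $0 \in \sigma(S_\lambda)$ (as $S_\lambda$ is not surjective---its range misses $E \neq \{0\}$) and $r(S_\lambda) \in \sigma(S_\lambda)$.

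For (iii): fix $\epsilon > 0$ and set $\mathscr K := \bigvee\{\ker(S_\lambda^* - w) : w \in \mathbb D_\epsilon\}$. Since $\ker(S_\lambda^* - w) \supseteq \{\kappa_{\mathscr H}(\cdot,\overline{w})g : g \in E\}$ (transported back to $l^2(V)$), if $f \perp \mathscr K$ then $\langle Uf, \kappa_{\mathscr H}(\cdot,w)g\rangle_{\mathscr H} = 0$, i.e. $\langle (Uf)(w), g\rangle_E = 0$ for all $w \in \mathbb D_\epsilon$, $g \in E$. Hence $Uf$ vanishes identically on $\mathbb D_\epsilon$, so $Uf \equiv 0$ on $\mathbb D_{r_\lambda}$ by the identity theorem, giving $f = 0$. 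Thus $\mathscr K = l^2(V)$.

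For (iv)--(vi), now assume $\dim E < \infty$, which by Proposition \ref{probranch} means $\mathscr T$ is locally finite with finite $k_{\mathscr T}$; also $S_\lambda$ has closed range so $S_\lambda - w$ is Fredholm for $w$ near $0$ with $\operatorname{ind}(S_\lambda) = -\dim E$. For (vi), the cleanest route is Lemma \ref{facts}(ii): $S_\lambda^{*k}e_u$ is a scalar multiple of $e_{\mathsf{par}^{\langle k\rangle}(u)}$, so $\ker S_\lambda^{*k} = \bigvee\{e_u : n_u \leq k-1\} = \bigoplus_{j=0}^{k-1}\mathsf{Chi}^{\langle j\rangle}(\mathsf{root})$-span, and hence $\ker S_\lambda^{*k}/\ker S_\lambda^{*k-1}$ has dimension $\operatorname{card}(\mathsf{Chi}^{\langle k-1\rangle}(\mathsf{root}))$; by Lemma \ref{branch}(ii) this is constant for $k-1 \geq k_{\mathscr T}$, and one checks the constant equals $\dim E$ by comparing with \eqref{kernel} (or by noting $\dim\ker S_\lambda^{*k} = k\dim E$ follows from the nullity of $S_\lambda^{*k}$ being $k$ times that of $S_\lambda^*$ once the $\ker S_\lambda^{*j}/\ker S_\lambda^{*j-1}$ are eventually constant and summing). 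Then (v) follows: on the component $\Omega_0$ of $\mathbb C \setminus \sigma_e(S_\lambda)$ containing $0$, $\operatorname{ind}(S_\lambda - w)$ is constant and at $w=0$ equals $-\dim\ker S_\lambda^*$ wait---one should argue $\operatorname{ind}(S_\lambda) = -\dim E$ directly since $S_\lambda$ is injective with cokernel $E$; lower semicontinuity of index plus the jump formula $\operatorname{ind}(S_\lambda - w) = \operatorname{ind}(S_\lambda) - (\text{something} \geq 0)$ across components, together with $\dim\ker(S_\lambda - w)^* \leq \dim E$ (from (vi)-type bounds, since $\ker(S_\lambda-w)^* \subseteq$ a space of holomorphic kernel functions of dimension $\leq \dim E$), gives $\operatorname{ind}(S_\lambda - w) \geq -\dim E$ everywhere. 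Finally (iv): $\sigma_{ap}(S_\lambda) = \sigma_e(S_\lambda)$ because off $\sigma_e$ the operator $S_\lambda - w$ is Fredholm with index $\leq -\dim E < 0$, hence bounded below iff $w \notin \sigma_{ap}$ forces---actually $\operatorname{ind} < 0$ means $S_\lambda - w$ is never surjective but is bounded below exactly off $\sigma_{ap}$; since a Fredholm operator of negative index with trivial kernel is bounded below, and $\ker(S_\lambda - w) = \{0\}$ always by (i), we get $\sigma_{ap}(S_\lambda) \cap (\mathbb C\setminus\sigma_e(S_\lambda)) = \emptyset$, so $\sigma_{ap}(S_\lambda) = \sigma_e(S_\lambda)$. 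That $\sigma_e(S_\lambda)$ is a union of at most $\dim E$ annuli centered at $0$ follows from circular symmetry (so it is a union of circles/annuli) together with a dimension count: crossing a circle $|w| = \rho$ where $\operatorname{ind}$ jumps, the jump is the increase in $\dim\ker(S_\lambda-w)^*$, which is bounded by $\dim E$ and is additive over the finitely many jumps, so there are at most $\dim E$ such circles bounding at most $\dim E$ annular pieces.

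\textbf{Main obstacle.} The genuinely delicate part is (v) and the "at most $\dim E$ annuli" count in (iv): one needs a clean bound $\dim\ker(S_\lambda - w)^* \leq \dim E$ for \emph{all} $w$ (not just small $w$), which should come from identifying $\ker(S_\lambda^* - \overline{w})$ with the range of the evaluation-type map $g \mapsto \kappa_{\mathscr H}(\cdot, w)g$ when $w \in \mathbb D_{r_\lambda}$, but for $w$ in the annulus $r_\lambda \leq |w| \leq r(S_\lambda)$ the kernel function argument is unavailable and one must instead bound the defect numbers using the operator-weighted-shift structure (Lemma \ref{facts}) directly, or use that the index can only decrease by at most $\dim E$ per jump because $S_\lambda^*$, restricted appropriately, behaves like $\dim E$ interlaced scalar shifts. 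Pinning down this uniform bound and the additivity of index jumps across the finitely many circles is where the real work lies.
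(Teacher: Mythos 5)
Your parts (i)--(iii) track the paper's argument, but there are three genuine gaps in the rest. First, in (ii) you claim $\sigma(S_\lambda)=\overline{\mathbb D}_{r(S_\lambda)}$ follows from circular symmetry together with $0, r(S_\lambda)\in\sigma(S_\lambda)$; this is insufficient, since a circularly symmetric compact set containing $0$ and the circle of radius $r(S_\lambda)$ need not be the full closed disc (e.g.\ $\{0\}\cup\{|w|=r(S_\lambda)\}$). The missing ingredient is that the spectrum of an \emph{analytic} operator is connected, which the paper proves separately (via the Riesz decomposition theorem applied to $T^*$ and the density of $\bigvee_k \ker T^{*k}$); connectedness plus circularity then rules out holes. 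Second, your (vi) rests on the false identity $\ker S_\lambda^{*k}=\bigvee\{e_u : n_u\le k-1\}$. The fact that $S_\lambda^{*k}$ sends each $e_u$ to a multiple of $e_{\parentn{k}{u}}$ does not mean the kernel is spanned by the annihilated basis vectors: distinct vertices sharing a $k$-th ancestor produce extra kernel vectors (already for $k=1$ on $\mathscr T_2$, the vector $\lambda_{(2,1)}e_{(1,1)}-\lambda_{(1,1)}e_{(2,1)}$ lies in $\ker S_\lambda^*$ although $n_{(1,1)}=1$). The correct description of $\ker S_\lambda^{*k}$ involves the summands $l^2(\childn{k}{v})\ominus\langle\lambdab^v_k\rangle$ over $v\in W_{-1}$, and the ensuing count is a genuine telescoping computation, not the one you give; your fallback ``$\dim\ker S_\lambda^{*k}=k\dim E$'' is precisely the assertion to be proved.

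Third, for (iv) and (v) you correctly identify the obstacle but do not overcome it, and the paper's resolution is a structural fact you are missing: since all vertices beyond generation $k_{\mathscr T}$ have exactly one child, $l^2(V)$ splits as a finite-dimensional space $\mathcal M$ (spanned by $e_v$, $v\in W_{-1}$) plus $d:=\mathrm{card}(\childn{k_{\mathscr T}}{\mathsf{root}})$ invariant subspaces on each of which $S_\lambda$ restricts to a unilateral weighted shift $S_i$. Hence $S_\lambda$ is a \emph{finite-rank perturbation} of $S_1\oplus\cdots\oplus S_d$, so $\sigma_e(S_\lambda)=\bigcup_i\sigma_e(S_i)$, each $\sigma_e(S_i)=\sigma_{ap}(S_i)$ is an annulus by Ridge's theorem, and $\mathrm{ind}(S_\lambda-w)=-\sum_i\dim\ker(S_i^*-\overline w)\ge -d$ with equality $-d=\mathrm{ind}(S_\lambda)=-\dim E$ at $w=0$. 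This gives the uniform bound for all $w$ that your kernel-function argument cannot reach outside $\mathbb D_{r_\lambda}$, proves $d=\dim E$ (hence the ``at most $\dim E$ annuli'' count), and also supplies the identity $\mathrm{card}(\childn{k}{\mathsf{root}})=\dim E$ for $k\ge k_{\mathscr T}$ that the paper's proof of (vi) ultimately relies on. Without this decomposition, your index-jump counting and your $\sigma_{ap}=\sigma_e$ reduction (which, by the way, the paper obtains more simply from finite cyclicity of $S_\lambda$ and emptiness of $\sigma_p$) do not close.
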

\begin{remark} 
Since 
$r_\lambda r(S_{\lambda'})\geq1$ (Theorem \ref{thm1}),
by the inclusion in (ii), $r(S_\lambda)r(S_{\lambda'})\geq1$.
This inequality is sharp. In fact, if $S_{\lambda}$ is an isometry then $r(S_\lambda)=1=r(S_{\lambda'}),$ so that equality holds in $r(S_\lambda)r(S_{\lambda'})\geq1$. Also, if $r(S_\lambda)=1=r(S_{\lambda'})$ then $r_{\lambda}$ is necessarily equal to $1.$ Finally, since $S_{\lambda}$ is analytic, the part (vi) above precisely says that $S^*_{\lambda}$ is an abstract backward shift in the sense of \cite{B-1} and \cite{R-1}.  
\end{remark}

In the proof of Theorem \ref{spectral}, we need the analytic model as well as a number of general facts about $S_{\lambda}$. 
The first of which generalizes a well-known fact that the spectrum of a weighted shift is connected \cite[Theorem 4]{S}(see also \cite[Theorem 8]{G}, \cite[Theorem 3.5]{R-1}).
\begin{lemma} \label{connected}
The spectrum of an analytic operator is connected.
\end{lemma}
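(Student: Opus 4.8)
The plan is to show that if $T\in B(\mathcal H)$ is analytic, i.e.\ $\bigcap_{n\geq0}T^n(\mathcal H)=\{0\}$, then $\sigma(T)$ cannot be written as a disjoint union of two non-empty closed sets. The natural tool is the Riesz functional calculus: if $\sigma(T)=\sigma_1\sqcup\sigma_2$ with $\sigma_1,\sigma_2$ non-empty, closed (hence compact), and contained in disjoint open sets, then the associated Riesz idempotent $P=\frac{1}{2\pi i}\int_{\Gamma}(\zeta-T)^{-1}\,d\zeta$ (with $\Gamma$ a contour enclosing $\sigma_1$ but not $\sigma_2$) is a non-trivial bounded projection commuting with $T$, and it decomposes $\mathcal H=\mathcal H_1\oplus\mathcal H_2$ into $T$-invariant subspaces with $\sigma(T|_{\mathcal H_i})=\sigma_i$.

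The key observation is that $0$ lies in exactly one of the two spectral sets, say $0\notin\sigma_1$. Then $T|_{\mathcal H_1}$ is invertible, since $0\notin\sigma(T|_{\mathcal H_1})=\sigma_1$. Writing $T=T_1\oplus T_2$ under the decomposition $\mathcal H=\mathcal H_1\oplus\mathcal H_2$, invertibility of $T_1$ gives $T_1^n(\mathcal H_1)=\mathcal H_1$ for every $n\geq0$. Consequently
$$\bigcap_{n\geq0}T^n(\mathcal H)\supseteq \bigcap_{n\geq0}T_1^n(\mathcal H_1)=\mathcal H_1.$$
Since $\sigma_1$ is non-empty, $\mathcal H_1\neq\{0\}$ (a bounded operator on the zero space has empty spectrum), so $\bigcap_{n\geq0}T^n(\mathcal H)\neq\{0\}$, contradicting analyticity. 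Hence no such decomposition of $\sigma(T)$ exists, and $\sigma(T)$ is connected.

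The only subtlety to be careful about is the degenerate case where $\sigma(T)$ could be empty — but for a bounded operator on a non-zero Hilbert space the spectrum is always non-empty and compact, and in the present paper all Hilbert spaces are infinite-dimensional, so $\sigma(T)$ is a non-empty compact subset of $\mathbb C$; a one-point set is trivially connected, so we may assume $\sigma(T)$ has at least two points before invoking the Riesz decomposition. I do not anticipate a genuine obstacle here; the main point is simply to record the right splitting and notice that the component not containing $0$ forces a large common range. One could alternatively phrase the argument via $0$ necessarily belonging to $\sigma(T)$ (since $T$ is not surjective, being analytic and non-zero) and then run the same idempotent argument, but the cleanest route is the one above: isolate the piece of the spectrum away from the origin, observe $T$ is invertible there, and contradict $\bigcap_n T^n(\mathcal H)=\{0\}$.
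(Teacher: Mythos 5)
Your proof is correct. It rests on the same two pillars as the paper's argument---the Riesz decomposition of $\mathcal H$ associated with a disconnection of the spectrum, and the distinguished role of $0$ for an analytic operator---but you extract the contradiction differently. The paper passes to the adjoint: it notes that analyticity forces $\bigvee_{k\geq0}\ker T^{*k}=\mathcal H$, splits $\sigma(T^*)$ along the component $K_1$ containing $0$, and shows that every $\ker T^{*k}$ lies in the summand carrying $K_1$, so the other summand must be trivial. You stay with $T$ itself: the summand $\mathcal H_1$ whose spectrum misses $0$ carries an invertible restriction, whence $\mathcal H_1\subseteq\bigcap_{n\geq0}T^n(\mathcal H)$, contradicting analyticity directly. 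Your route has the small advantage of using only the defining condition $\bigcap_{n\geq0}T^n(\mathcal H)=\{0\}$, whereas the paper's argument invokes the a priori stronger statement that $\bigvee_{k\geq0}\ker T^{*k}$ is dense, i.e. $\bigcap_{n\geq0}\overline{T^n(\mathcal H)}=\{0\}$; the two coincide when the ranges $T^n(\mathcal H)$ are closed, as they are for the left-invertible shifts to which the lemma is applied, but your version works verbatim for an arbitrary analytic operator. One cosmetic slip: $0$ lies in \emph{at most} one of $\sigma_1,\sigma_2$, not exactly one (a priori it need not lie in $\sigma(T)$ at all---though, as you remark, analyticity does force $0\in\sigma(T)$); in any case a spectral piece avoiding $0$ exists and your argument proceeds unchanged.
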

\begin{proof}
Let $T \in B(\mathcal H)$ be analytic. We adapt the technique 
of \cite[Lemma 3.8]{Ch-Ya} to the present situation. Since 
$T$ is analytic, 
\begin{equation}\label{eq1}
\displaystyle\bigvee_{k\geq0}{\ker}\, T^{*k}=\mathcal H.
\end{equation}
Therefore, $0 \in \sigma(T).$
Let $K_1$ be the connected component of $\sigma(T^*)$ containing 0 and 
$K_2=\sigma(T^*)\setminus K_1$. 
If possible, suppose that $K_2$ is non-empty. Then by Riesz 
Decomposition Theorem \cite[Chapter VII, Proposition 4.11]{Conway}, there are closed subspaces $\mathcal H_1$ and 
$\mathcal H_2$ invariant under $T^*$ such that 
$\mathcal H=\mathcal H_1 \oplus \mathcal H_2$, 
$\sigma({T^*}|_{\mathcal H_1})=K_1$ and $\sigma({T^*}|_{\mathcal H_2})=K_2$. 
Let 
$h\in {\ker} {T^*}^{k}$. Then 
$h=x+y$ for $x\in \mathcal H_1$ and $y\in \mathcal H_2$. Since $T^{*k}h=0$, it follows 
that $T^{*k}x=0=T^{*k}y$. If $y$ is 
non-zero, then 
$0\in \sigma_p({T^{*k}}|_{\mathcal H_2})
\subseteq\sigma({T^{*k}}|_{\mathcal H_2}),
$ and hence 
by spectral mapping property,
$0\in\sigma({T^*}|_{\mathcal H_2})=K_2$, which is a contradiction. 
So $y$ must 
be 
zero. Therefore, $\mathcal H_1$ contains $\ker 
T^{*k}$ for all $k \geq 0$. Hence from \eqref{eq1}, we get 
$\mathcal H_1=\mathcal H$, and hence $K_2$ must be empty. This is contrary to the assumption that $K_2 \neq \emptyset$. This shows that $\sigma(T^*)$ is connected. Since $\sigma(T) = \{\overline{z} : z \in \sigma(T^*)\}$ and $z \rightsquigarrow \overline{z}$ is continuous, $\sigma(T)$ is connected.
\end{proof}

\begin{lemma}
\label{decomposition}
Let $S_{\lambda}$ be a weighted shift on $\mathscr T$ and let $d:=\mbox{card}(\childn{k_{\mathscr T}}{\mathsf{root}})$ (possibly infinite). 
Then there exist subspaces $\mathcal M$ and $\mathcal H_i~(i=1, \cdots, d)$ such that \beq \label{deco}
S_{\lambda}=\left[\begin{array}{ccccc}
A & 0 & 0 & \cdots & 0      \\
A_1 & S_1 & 0 & \cdots & 0     \\
A_2 & 0 & S_2 & \cdots & 0     \\
\vdots & \vdots & \vdots & \ddots & \vdots      \\
A_d & 0 & \cdots & 0 & S_d
\end{array}\right] ~\mbox{on~}l^2(V) = \mathcal M \oplus  \mathcal H_1 \oplus \cdots \oplus \mathcal H_d,
\eeq
where $A:=P_{\mathcal M}S_{\lambda}|_{\mathcal M},$ $A_i:=P_{\mathcal H_i}S_{\lambda}|_{\mathcal M},$ and $S_i:=S_{\lambda}|_{\mathcal H_i}$ for $i=1, \cdots, d.$ Moreover, the following statements hold.
\begin{enumerate}
\item[(i)] Each $S_i$ is unitarily equivalent to a unilateral weighted shift.
\item[(ii)] 
Let $\mathscr T$ possess the property that
$v \in \childn{k_{\mathscr T}-1}{\mathsf{root}}$ whenever  $\mbox{card}(\child v)$ is infinite for some $v \in W_0.$ Then
$S_{\lambda}$ is a finite rank perturbation of $S_1 \oplus \cdots \oplus S_d.$
\end{enumerate}
\end{lemma}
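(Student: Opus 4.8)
The plan is to build the decomposition directly from the combinatorial structure of $\mathscr T$, using the disjoint-union decomposition \eqref{disjoint} and the branching index. First I would set $W_0 = \bigcup_{j=0}^{k_{\mathscr T}} \childn{j}{\mathsf{root}}$ and let $\mathcal M := \bigvee\{e_v : v \in W_0\}$, so that $\mathcal M$ is a finite-(or countably-)dimensional reducing-like piece at the top of the tree. By Lemma \ref{branch}(ii), for $n \geq k_{\mathscr T}$ the set $\childn{n}{\mathsf{root}}$ has constant cardinality $d := \mbox{card}(\childn{k_{\mathscr T}}{\mathsf{root}})$, and moreover every vertex $v$ with $n_v \geq k_{\mathscr T}$ has exactly one child (leaflessness plus $v \notin V_\prec$). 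Enumerate $\childn{k_{\mathscr T}}{\mathsf{root}} = \{w_1, \dots, w_d\}$; for each $i$, the set $\{w_i\} \cup \child{w_i} \cup \childn{2}{w_i} \cup \cdots$ forms a simple path (no branching), and I set $\mathcal H_i := \bigvee\{e_v : v \in \childn{n}{w_i},\ n \geq 0\}$. Then \eqref{disjoint} gives $l^2(V) = \mathcal M \oplus \mathcal H_1 \oplus \cdots \oplus \mathcal H_d$ as claimed.

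Next I would verify the matrix form \eqref{deco}. By Lemma \ref{facts}(i), $S_\lambda e_v \in \bigvee\{e_u : u \in \child v\}$. If $v \in W_0$ with $n_v < k_{\mathscr T}$, then $\child v \subseteq W_0$, so $S_\lambda e_v \in \mathcal M$; if $v \in \childn{k_{\mathscr T}}{\mathsf{root}}$, say $v = w_i$, then $\child v \subseteq \mathcal H_i$; hence $S_\lambda(\mathcal M) \subseteq \mathcal M \oplus \bigoplus_i \mathcal H_i$ with the only off-diagonal contributions landing via $w_i$ into $\mathcal H_i$ — this is the first column $(A, A_1, \dots, A_d)^{T}$. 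For $v \in \mathcal H_i$ (so $n_v \geq k_{\mathscr T}$, and $v$ lies on the $i$-th path), $\child v$ is a single vertex again on the $i$-th path, so $S_\lambda e_v \in \mathcal H_i$; hence $\mathcal H_i$ is $S_\lambda$-invariant and $S_\lambda|_{\mathcal H_i} = S_i$ contributes only the diagonal block. This is exactly \eqref{deco} with $A = P_{\mathcal M}S_\lambda|_{\mathcal M}$, $A_i = P_{\mathcal H_i}S_\lambda|_{\mathcal M}$, $S_i = S_\lambda|_{\mathcal H_i}$.

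For (i): each $\mathcal H_i$ has orthonormal basis $\{e_v : v \in \childn{n}{w_i},\ n \geq 0\}$ indexed by $\mathbb Z_+$ along the path, and $S_\lambda$ sends the $n$-th basis vector to a positive multiple of the $(n{+}1)$-th; relabelling gives unitary equivalence with a unilateral weighted shift (in the sense of Example \ref{diagonal}), with weights read off from $\lambdab$ along the path. For (ii): under the stated hypothesis, any vertex $v \in W_0$ with infinitely many children satisfies $v \in \childn{k_{\mathscr T}-1}{\mathsf{root}}$, so $\child v \subseteq \childn{k_{\mathscr T}}{\mathsf{root}}$; the operators $A_i = P_{\mathcal H_i}S_\lambda|_{\mathcal M}$ are supported on the (finitely many) vertices of $W_0 \setminus \childn{k_{\mathscr T}-1}{\mathsf{root}}$ that have parents in... — more precisely, each $A_i$ has range in $\langle e_{w_i^{+}}\rangle$ where $w_i^{+}$ is the unique child of $w_i$, wait: $A_i e_v = P_{\mathcal H_i} S_\lambda e_v$ is nonzero only when $w_i \in \child v$, i.e. $v = \parent{w_i}$, and then $A_i e_{\parent{w_i}} = \lambda_{w_i} e_{w_i}$, a rank-one operator. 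Since $\mathscr T$ is locally finite at level $k_{\mathscr T}-1$ under the hypothesis (only finitely many $w_i$, i.e. $d < \infty$... ) — here I must be careful: $d$ may still be infinite, but the hypothesis forces each branching vertex feeding into level $k_{\mathscr T}$ to sit at level $k_{\mathscr T}-1$, and there are finitely many such branching vertices (as $V_\prec$ is finite once $k_{\mathscr T}<\infty$ and local finiteness holds, by Proposition \ref{probranch}); hence only finitely many of the $A_i$ are nonzero, and also $A$ differs from the zero map only on the finite set $W_0$, so $S_\lambda - (S_1 \oplus \cdots \oplus S_d)$, which equals the operator with matrix having first column $(A, A_1, \dots, A_d)^{T}$ and zeros elsewhere, has finite rank.

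The main obstacle I anticipate is part (ii): untangling exactly which hypothesis on $\mathscr T$ guarantees that the "correction" operator $S_\lambda - \bigoplus_i S_i$ — which a priori involves $A$ acting on a possibly infinite-dimensional $\mathcal M$ and the column $(A_i)$ with possibly infinitely many nonzero entries when $d = \infty$ — actually has finite rank. The key realization is that $A$ maps $\mathcal M$ into $\mathcal M$ only through finitely many "new" basis vectors beyond what a direct sum of path-shifts would produce, and that the stated condition ($v \in \childn{k_{\mathscr T}-1}{\mathsf{root}}$ whenever $\mbox{card}(\child v) = \infty$) is precisely what prevents an infinite "fan-out" from being separated from the shift structure; combined with $\mbox{card}(V_\prec) < \infty$ (forced by $k_{\mathscr T} < \infty$ plus local finiteness away from the offending level, via Proposition \ref{probranch} applied appropriately), this pins down the rank. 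I would spend the bulk of the write-up making this finiteness bookkeeping precise, whereas parts (i) and the matrix form \eqref{deco} are routine given \eqref{disjoint} and Lemma \ref{facts}(i).
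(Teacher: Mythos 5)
Your overall strategy --- splitting $l^2(V)$ into the span of the vertices near the root plus the $d$ straight paths emanating from level $k_{\mathscr T}$ --- is exactly the paper's, but there is an off-by-one error that breaks the decomposition as you have written it. You take $\mathcal M = \bigvee\{e_v : v \in W_0\}$ with $W_0 = \bigcup_{j=0}^{k_{\mathscr T}} \childn{j}{\mathsf{root}}$, and simultaneously put each $w_i \in \childn{k_{\mathscr T}}{\mathsf{root}}$ into $\mathcal H_i$ (your $\mathcal H_i$ contains $\childn{0}{w_i} = \{w_i\}$). Since level $k_{\mathscr T}$ lies in $W_0$, each $e_{w_i}$ belongs to both $\mathcal M$ and $\mathcal H_i$, so $l^2(V) = \mathcal M \oplus \mathcal H_1 \oplus \cdots \oplus \mathcal H_d$ is not an orthogonal decomposition. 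The paper avoids this by taking $\mathcal M = \bigvee\{e_v : v \in W_{-1}\}$, i.e.\ levels $0$ through $k_{\mathscr T}-1$ only, and starting each path $\mathcal H_i$ at level $k_{\mathscr T}$. With that correction your verification of \eqref{deco} and of (i) goes through essentially verbatim.

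The second problem is the finiteness bookkeeping in (ii). The hypothesis there does \emph{not} give local finiteness: it permits a vertex at level $k_{\mathscr T}-1$ to have infinitely many children, in which case $d=\infty$ and $W_0$ is infinite. Your claims that ``only finitely many of the $A_i$ are nonzero'' and that ``$A$ differs from the zero map only on the finite set $W_0$'' are then both wrong or unjustified: every $A_i$ is nonzero (it sends $e_{\parent{w_i}}$ to $\lambda_{w_i}e_{w_i}$), and $W_0$ need not be finite; the appeal to Proposition \ref{probranch} is likewise unavailable because local finiteness is not assumed. The correct argument, which is the paper's, is that the hypothesis forces $W_{-1}$ to be finite: if $W_{-1}$ were infinite, some vertex $v$ with $\child{v} \subseteq W_{-1}$ would have infinitely many children, whence $v \in \childn{k_{\mathscr T}-1}{\mathsf{root}}$ by hypothesis and so $\child v \cap W_{-1} = \emptyset$, a contradiction. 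Hence $\mathcal M$ is finite dimensional and the entire first column of \eqref{deco}, namely $S_\lambda P_{\mathcal M}$, has rank at most $\dim \mathcal M < \infty$; this yields (ii) with no need to count how many of the $A_i$ are nonzero.
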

\begin{proof}
Note that for all $v \in \childn{k_{\mathscr T}}{\mathsf{root}}$, $\mbox{card}(\child v)=1.$ Let $W_{-1}$ be as defined in \eqref{Wn}.
We relabel the set $V$ of vertices as follows:
\beqn \label{label}
V= W_{-1} \sqcup \{v_{i, n} : n \geq 0,\ i=1, \cdots, d\} 
\eeqn
such that $\childn{k_{\mathscr T}}{\mathsf{root}}=\{v_{i, 0} : i=1, \cdots, d\}$, and
$\child {v_{i, n}} = \{v_{i, n+1}\}$ for all $n \geq 0,$  $i=1, \cdots, d.$
Now consider the subspaces $\mathcal M$ and $\mathcal H_i$ of $l^2(V)$ given by
\beqn
\mathcal M:=\bigvee\,\{e_v : v \in W_{-1}\},\ \mathcal H_i:=\bigvee \{e_{v_{i, n}} : n \geq 0\},\ i=1, \cdots, d.
\eeqn 
Note that 
the subspaces $\mathcal H_1, \cdots, \mathcal H_d$ are invariant under $S_{\lambda}.$
Then $S_{\lambda}$ admits the decomposition as given by \eqref{deco}.
Since $S_ie_{v_{i, n}} = \lambda_{v_{i, n+1}} e_{v_{i, n+1}}~(n \geq 0),$ it is clear that each $S_i$ is unitarily equivalent to a unilateral weighted shift. 

To see (ii), note that if $\mbox{card}(W_{-1})$ is infinite then for some $v \in W_{-1} \subseteq W_0$,
we must have $\child v \subseteq W_{-1}$ and $\mbox{card}(\child v)$ is infinite. But then by hypothesis, $v \in \childn{k_{\mathscr T}-1}{\mathsf{root}}$, which implies that $\child v \cap W_{-1} = \emptyset.$ Thus we arrive at a contradiction. This shows that $\mbox{card}(W_{-1})$ is finite, and hence $\mathcal M$ is finite dimensional. Thus $A, A_1, \cdots, A_d$ are finite rank operators, and the conclusion in (ii) is now immediate.  
\end{proof}

The following is certainly known. We include it for the sake of completeness. 
\begin{lemma} \label{f-cyclic} Let $T \in B(\mathcal H)$ be finitely cyclic. 
If $\sigma_p(T)$ is empty then $\sigma_{ap}(T)=\sigma_e(T)$.
\end{lemma}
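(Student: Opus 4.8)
The plan is to prove the nontrivial inclusion $\sigma_{ap}(T) \subseteq \sigma_e(T)$; the reverse inclusion $\sigma_e(T) \subseteq \sigma_{ap}(T)$ always holds since a point of the essential spectrum cannot be a point at which $T-w$ is bounded below with closed range, hence cannot be in the resolvent nor correspond to a bounded-below operator. So suppose $w \in \sigma_{ap}(T)$, and argue by contradiction: assume $w \notin \sigma_e(T)$. Then $T - w$ is a Fredholm operator. Since $\sigma_p(T)$ is empty, $T - w$ is injective, so $\ker(T-w) = \{0\}$ and the Fredholm index of $T - w$ equals $-\dim \ker (T-w)^* = -\dim \ker(T^*-\overline w)$. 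Because $w \in \sigma_{ap}(T)$ while $T - w$ is Fredholm, the range of $T-w$ is closed but $T-w$ is not bounded below — yet injectivity plus closed range would force $T-w$ to be bounded below, a contradiction. Hence the only way out is that $w \notin \sigma_{ap}(T)$ after all, unless we have mislabelled something; the correct reading is that $w \in \sigma_{ap}(T)$ and $T-w$ Fredholm together force $\dim\ker(T-w) > 0$, contradicting emptiness of $\sigma_p(T)$.

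Let me restate the core argument cleanly. First I would record the standard fact that if $T - w$ is Fredholm and injective, then $T-w$ is bounded below (injective $+$ closed range $\Rightarrow$ bounded below), hence $w \notin \sigma_{ap}(T)$. Taking the contrapositive: if $w \in \sigma_{ap}(T)$ and $T - w$ is Fredholm, then $\ker(T-w) \neq \{0\}$, i.e. $w \in \sigma_p(T)$. Since by hypothesis $\sigma_p(T) = \emptyset$, no $w \in \sigma_{ap}(T)$ can have $T - w$ Fredholm; equivalently, $\sigma_{ap}(T) \subseteq \sigma_e(T)$. Combined with the trivial inclusion $\sigma_e(T) \subseteq \sigma_{ap}(T)$, this gives equality.

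The point where finite cyclicity of $T$ enters — and I suspect this is the actual subtlety the lemma is flagging — is ensuring $\sigma_e(T) \subseteq \sigma_{ap}(T)$, or more precisely ruling out the possibility that $T - w$ fails to be Fredholm purely because its \emph{range} has infinite codimension while $T-w$ is nonetheless bounded below. If $T$ is finitely cyclic with cyclic vectors $h_1,\dots,h_m$, then for any $w$ with $T - w$ bounded below, one has $\dim\bigl(\mathcal H / \operatorname{ran}(T-w)\bigr) \le m$: indeed $\mathcal H = \bigvee\{T^k h_i\}$ implies $\mathcal H = \operatorname{span}\{h_1,\dots,h_m\} + \operatorname{ran}(T-w)$ by a telescoping argument ($T^{k}h_i - w T^{k-1}h_i \in \operatorname{ran}(T-w)$ peels off one power at a time, leaving $h_i$ modulo the range). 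Thus when $T-w$ is bounded below it is automatically Fredholm with $\dim\operatorname{coker}(T-w)\le m$, so $w\notin\sigma_e(T)$; contrapositively $\sigma_e(T)\subseteq\sigma_{ap}(T)$. This is the step I expect to be the main obstacle to write carefully, since it is the only place the hypothesis is genuinely used, whereas the inclusion $\sigma_{ap}(T)\subseteq\sigma_e(T)$ rests only on emptiness of the point spectrum.
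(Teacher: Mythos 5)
Your final argument is correct and is essentially the paper's: the paper simply cites Herrero's Proposition 1(i) for the fact that $\dim\ker(T^*-\overline{w})$ is finite for every $w$ and leaves the rest as ``easy to see,'' and your telescoping argument $\mathcal H=\operatorname{span}\{h_1,\dots,h_m\}+\operatorname{ran}(T-w)$ is precisely a direct proof of the needed case of that cited fact, while your contrapositive Fredholm argument supplies the ``easy to see'' inclusion $\sigma_{ap}(T)\subseteq\sigma_e(T)$. One caution: the claim in your opening paragraph that $\sigma_e(T)\subseteq\sigma_{ap}(T)$ ``always holds'' is false in general (a shift of infinite multiplicity is bounded below at $0$, yet $0$ lies in its essential spectrum because the cokernel is infinite-dimensional) --- you correctly retract this in your final paragraph, where finite cyclicity is used exactly to exclude a closed range of infinite codimension, so only that first paragraph should be deleted from a written-up version.
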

\begin{proof}
By \cite[Proposition 1(i)]{H}, $\dim \ker(T^*-w)$ is finite for every $w \in \mathbb C.$ If $\sigma_p(T)=\emptyset$ then it is easy to see that  $\sigma_{ap}(T)=\sigma_e(T)$.
\end{proof}

We also need exact description of the kernel of positive integral powers of $S^*_{\lambda}$ in the proof of Theorem \ref{spectral}.
\begin{lemma}\label{lemABS}
Let $S_{\lambda} \in B(l^2(V))$ be a weighted shift on $\mathscr T=(V, \mathcal E)$. Then, for all integers $k \geq 1$,
\beq \label{kernelk}
\ker S^{*k}_\lambda = \bigvee\Big \{e_v : v \in \cup_{i=0}^{k-1} \childn{i}{\mathsf{root}}\Big \} \oplus \bigoplus_{v \in W_{-1}}\Big(l^2 \big(\childn{k}{v} \big) \ominus \langle \lambdab^v_k \rangle\Big),
\eeq
where $\lambdab^v_k : \childn{k}{v} \rar \mathbb C~\mbox{is defined by~} 
\lambdab^v_k(u)= \lambda_u \lambda_{\parent u} \cdots \lambda_{\parentn{k-1}{u}},$ and $W_{-1}$ is given by \eqref{Wn}. Consequently,
\beq\label{dim-kernelk}
\dim \ker S^{*k}_\lambda = \displaystyle \sum_{i=0}^{k-1}\mbox{card}\big(\childn{i}{\mathsf{root}}\big) + \sum_{v \in W_{-1}}\Big(\mbox{card}\big(\childn{k}{v}\big)-1\Big).
\eeq
\end{lemma}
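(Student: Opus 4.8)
The plan is to read off $\ker S^{*k}_{\lambda}$ directly from the action of $S^{k}_{\lambda}$ on the canonical orthonormal basis. First I would record, via Lemma \ref{facts}(i), that $S^{k}_{\lambda}e_{u}=\sum_{v\in\childn{k}{u}}\lambdab^{u}_{k}(v)\,e_{v}$ for every $u\in V$, where $\lambdab^{u}_{k}(v):=\lambda_{v}\lambda_{\parent v}\cdots\lambda_{\parentn{k-1}{v}}$; since $S_{\lambda}$ is bounded and $\mathscr T$ is leafless with positive weights, $\lambdab^{u}_{k}$ is a well-defined nonzero vector in $l^{2}(\childn{k}{u})$. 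Because $\{e_{u}\}_{u\in V}$ is an orthonormal basis, a vector $f=\sum_{v\in V}f(v)e_{v}$ satisfies $S^{*k}_{\lambda}f=0$ if and only if $\langle f,S^{k}_{\lambda}e_{u}\rangle=0$ for all $u\in V$, i.e. $\sum_{v\in\childn{k}{u}}\lambdab^{u}_{k}(v)f(v)=0$ for all $u\in V$ (the weights being real); equivalently, the restriction $f|_{\childn{k}{u}}$ is orthogonal to $\lambdab^{u}_{k}$ inside $l^{2}(\childn{k}{u})$ for every $u\in V$.

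Next I would feed in the partition of $V$. By \eqref{disjoint}, the vertices of depth $<k$ form $\bigcup_{i=0}^{k-1}\childn{i}{\mathsf{root}}$, while the vertices of depth $\ge k$ are the disjoint union $\bigsqcup_{u\in V}\childn{k}{u}$, since every $v$ with $n_{v}\ge k$ lies in $\childn{k}{u}$ for the unique vertex $u=\parentn{k}{v}$. Hence $l^{2}(V)=\bigvee\{e_{v}:n_{v}<k\}\oplus\bigoplus_{u\in V}l^{2}(\childn{k}{u})$, and the criterion above says exactly that $f\in\ker S^{*k}_{\lambda}$ iff its component in $\bigvee\{e_{v}:n_{v}<k\}$ is arbitrary and its component in each $l^{2}(\childn{k}{u})$ lies in $l^{2}(\childn{k}{u})\ominus\langle\lambdab^{u}_{k}\rangle$. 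This already gives a decomposition of $\ker S^{*k}_{\lambda}$ indexed by all of $V$. To obtain \eqref{kernelk} I would discard the trivial summands: if $u\notin W_{-1}$, then $n_{u}\ge k_{\mathscr T}$, and since $\mathscr T$ is leafless and every branching vertex has depth at most $k_{\mathscr T}-1$ (by Definition \ref{b-index}), $u$ and each of its descendants has exactly one child; hence $\mbox{card}(\childn{k}{u})=1$ and $l^{2}(\childn{k}{u})\ominus\langle\lambdab^{u}_{k}\rangle=\{0\}$ because $\lambdab^{u}_{k}\ne0$. Keeping only the indices $u\in W_{-1}$ yields \eqref{kernelk}.

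For \eqref{dim-kernelk} I would simply take dimensions termwise. By \eqref{disjoint}, $\dim\bigvee\{e_{v}:n_{v}<k\}=\sum_{i=0}^{k-1}\mbox{card}(\childn{i}{\mathsf{root}})$, and for each $v\in W_{-1}$ one has $\dim\big(l^{2}(\childn{k}{v})\ominus\langle\lambdab^{v}_{k}\rangle\big)=\mbox{card}(\childn{k}{v})-1$, since $\langle\lambdab^{v}_{k}\rangle$ is one-dimensional, with the convention $\infty-1=\infty$ so that the identity persists when $\mathscr T$ fails to be locally finite. Summing the two contributions gives \eqref{dim-kernelk}.

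There is no genuine obstacle here; the argument is essentially bookkeeping. The one place that needs care is the passage from $\bigoplus_{u\in V}$ to $\bigoplus_{u\in W_{-1}}$, which is precisely where leaflessness and the fact that branching is confined to the first $k_{\mathscr T}$ generations enter, and one must also allow countably infinite cardinalities throughout since $\mathscr T$ need not be locally finite. Alternatively, one could derive the $k=1$ case from \eqref{kernel} and induct on $k$, but the direct computation above seems shorter and more transparent.
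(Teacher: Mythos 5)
Your argument is correct and follows essentially the same route as the paper: the paper obtains the decomposition of $\ker S^{*k}_{\lambda}$ indexed over all of $V$ by citing the proof of \cite[Proposition 3.5.1]{Jablonski} (which is exactly the orthogonality computation you carry out explicitly), and then discards the trivial summands for $v\notin W_{-1}$ using Lemma \ref{branch}(ii), just as you do via leaflessness and the fact that branching vertices have depth at most $k_{\mathscr T}-1$. The only difference is that you make the first step self-contained rather than deferring it to the cited reference.
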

\begin{proof}
Following the lines of the proof of \cite[Proposition 3.5.1]{Jablonski}, one can easily deduce that for all integers $k \geq 1$,
$$\ker S^{*k}_\lambda = \bigvee\Big \{e_v : v \in \cup_{i=0}^{k-1} \childn{i}{\mathsf{root}}\Big \} \oplus \bigoplus_{v \in V}\Big(l^2 \big(\childn{k}{v} \big) \ominus \langle \lambdab^v_k \rangle\Big).$$
From Lemma \ref{branch}(ii), we know that for a vertex $v \in V$, $\mbox{card}(\child v)=1$ if $n_v \geq k_\mathscr T$. This is equivalent to the fact that $\mbox{card}(\childn{m}{v})=1$ for all $m \geq 1$ if $v \notin W_{-1}$. Hence, $l^2 \big(\childn{k}{v} \big) \ominus \langle \lambdab^v_k \rangle=\{0\}$ if $v \notin W_{-1}$. This proves \eqref{kernelk}. The proof of \eqref{dim-kernelk} is obvious in view of \eqref{kernelk}.
\end{proof}

\begin{proof}[Proof of Theorem \ref{spectral}] 
In 
view of Theorem \ref{thm1}, it is sufficient to work with the analytic model $({\mathscr 
M}_z, 
\kappa_{\mathscr H},\mathscr H)$  of $S_{\lambda},$ where the reproducing kernel 
Hilbert space $\mathscr H$ consists of
$E$-valued holomorphic functions $U_f$ on the disc 
$\mathbb D_{r_{\lambda}}$ given by \beqn 
U_f(z):=\displaystyle \sum_{n\geq0}(P_ES_{\lambda'}^{*n}f)z^n. \eeqn 
We check that $\sigma_p(\mathscr M_z)=\emptyset.$ 
Let $w \in \mathbb C$ and $h = \sum_{k=0}^{\infty} a_n z^n \in \mathscr H$ be such that
$(\mathscr M_z - w) h = 0,$ where $\{a_n\}_{n \geq 0} \subseteq E.$
Then for any $g \in E,$ 
\beqn
(z-w)\sum_{k=0}^{\infty} \inp{a_n}{g} z^n = 0~\mbox{for all~}z \in \mathbb D_{r_{\lambda}}.
\eeqn
It follows that $\inp{a_n}{g}=0$ for $g \in E,$ and hence $a_n=0$ every $n \geq 0$. This shows that $h=0,$ which gives (i).

To see (ii), 
note that for $f\in l^2(V)$, $g\in E$ and $w \in\mathbb{D}_{r_{\lambda}}$, by Theorem 
\ref{thm1}(i),
\[\langle U_f, {\mathscr M}_z^*\kappa_{\mathscr H}(\cdot,w)g\rangle=\langle 
{\mathscr M}_zU_f, \kappa_{\mathscr H}(\cdot, w)g\rangle=\langle w U_f(w), 
g\rangle
=\langle U_f, \overline{w}\kappa_{\mathscr H}(\cdot,w)g\rangle.\]
Thus ${\mathscr M}_z^*\kappa_{\mathscr H}(\cdot, w)g=\overline{w}k_\mathscr 
H(\cdot,w)g$ for 
all $w \in\mathbb{D}_{r_{\lambda}}$ and $g\in E$. Hence the point spectrum of ${\mathscr 
M}_z^*$ contains $\mathbb D_{r_{\lambda}}$.
As recorded earlier, $S_{\lambda}$ is circular, so that 
$\sigma(S^*_{\lambda})=\sigma(S_{\lambda}).$
The second inclusion in (ii) now follows from $\sigma_p(S^*_{\lambda}) \subseteq \sigma(S^*_{\lambda}).$ To complete the proof of (ii), it is only left to check 
that $\sigma(S_{\lambda}) = 
\overline{\mathbb D}_{r(S_{\lambda})}$.  By Lemmas \ref{lem1} and \ref{connected}, $\sigma(S^*_{\lambda})$ is 
connected. 
Now, suppose that 
$\sigma(S_\lambda)\neq\overline{\mathbb{D}}_{r(S_\lambda)}$. Since 
$r(S_{\lambda}) \in \sigma(S_{\lambda}),$ 
there is a $w_0 \in {\mathbb{D}}_{r(S_\lambda)}$ such that 
$w_0 \in \rho(S_\lambda):=\mathbb C \setminus \sigma(S_{\lambda}).$ Since $\rho(S_\lambda)$ 
is open, there is 
an $\epsilon > 0$ such that 
$\mathbb{D}_\epsilon(w_0):=\{w \in\mathbb{C}:|w-w_0|< \epsilon\}
\subseteq\rho(S_\lambda) \cap {\mathbb{D}}_{r(S_\lambda)}$.  Since 
$S_{\lambda}$ is circular and $\sigma(S_{\lambda})$ is connected, we arrive at 
a contradiction.
Thus, the 
spectrum of $S_\lambda$ is a disk
of radius $r(S_\lambda)$ centred at the origin. 

Suppose that 
$U_f$ is orthogonal to $\bigvee\{\ker ({\mathscr 
M}_z^*-\overline{w}):w\in\mathbb{D}_\epsilon\}$. By the preceding paragraph, 
$\kappa_{\mathscr H}(\cdot,w)g$ belongs to $\ker ({\mathscr 
M}_z^*-\overline{w})$ for every $w \in \mathbb{D}_{r_{\lambda}}.$ Hence 
$$
\sum_{n\geq0}\langle 
P_ES_{\lambda'}^{*n}f,g\rangle w^n = \langle 
U_f(w),g\rangle = \langle 
U_f,\kappa_{\mathscr H}(\cdot,w)g\rangle=0$$ 
$\mbox{for all~} w \in \mathbb 
D_\epsilon \cap \mathbb D_{r_{\lambda}}~\mbox{and~} 
g\in E.$
This implies that $\langle P_ES_{\lambda'}^{*n}f,g\rangle=0$ for all 
$n\geq0$ and $g\in E$. In particular, $\langle 
P_ES_{\lambda'}^{*n}f,P_ES_{\lambda'}^{*n}f\rangle=0$ for all $n\geq0$. Thus 
$U_f=0$. Therefore, 
$\bigvee\{{\ker}({\mathscr M}_z^*-\overline{w}):w\in\mathbb 
D_\epsilon\}=\mathscr 
H$.

Assume now that $E$ is finite dimensional. By Corollary \ref{cyclic}, $S_{\lambda}$ is finitely cyclic. 
By (i) above and Lemma \ref{f-cyclic}, $\sigma_{ap}(S_{\lambda}) = \sigma_e(S_{\lambda}).$
Thus to see (iv), it suffices to check that $\sigma_{e}(S_{\lambda})$ is a finite union of annuli centered at the origin. 
Let $d:=\mbox{card}(\childn{k_{\mathscr T}}{\mathsf{root}}),$ which is finite in view of Proposition \ref{probranch}.
Also, since $E$ is finite dimensional, by the same proposition $\mathscr T$ is locally finite.
Hence by Lemma \ref{decomposition}(ii), there exist unilateral weighted shifts $S_1, \cdots, S_d$ such that
$S_{\lambda}$ is a finite rank perturbation of $S_1 \oplus \cdots \oplus S_d.$
In particular, the essential spectrum of $S_{\lambda}$ equals the union of
essential spectrum of $S_1, \cdots, S_d$ \cite{Conway}. Another application of Lemma \ref{f-cyclic} shows that $\sigma_{ap}(S_i)=\sigma_e(S_i)$. However, the approximate point spectrum of a unilateral weighted shift is necessarily an annulus centered at the origin \cite[Theorem 1]{R}. The desired conclusion in (iv) is now immediate. 

Let us now see part (v). For any $w \in \mathbb C,$ note that \beqn \mbox{ind}\, (S_{\lambda} - w) = \mbox{ind}\, \oplus_{i=1}^d (S_{i}-w) 
\overset{\mbox{(i)}}= -  \oplus_{i=1}^d \dim \ker (S^*_{i}-\overline{w}). \eeqn
Since $\dim \ker (S^*_{i}-\overline{w})$ at most one, $\mbox{ind}\, (S_{\lambda} - w)$ is at least $-\dim E.$ However, 
$\dim \ker S^*_{i}=1$ for all $i$, and hence $-\dim E = \mbox{ind}\, S_{\lambda}= -d.$ Note that the proof above shows that $\mbox{card}(\childn{k_{\mathscr T}}{\mathsf{root}}) = \dim E$, and hence by Lemma \ref{branch}(ii), 
\beq \label{chi-ker} \mbox{card}(\childn{k}{\mathsf{root}}) = \dim E~\mbox{for all integers~}k \geq k_{\mathscr T}.\eeq 

To see (vi), fix an integer $k \geq 1$ and let
$\mathscr Q_k:=\ker S^{*k}_\lambda/\ker S^{*k-1}_\lambda$. 
Then using \eqref{dim-kernelk}, we get
\beq \label{quotnt}
\dim \mathscr Q_k 
&=& \mbox{card}\big(\childn{k-1}{\mathsf{root}}\big)+ \sum_{v \in W_{-1}}\mbox{card}\big(\childn{k}{v}\big)
-\sum_{v \in W_{-1}}\mbox{card}\big(\childn{k-1}{v}\big)\nonumber\\
&=&\sum_{v \in W_{-1}}\mbox{card}\big(\childn{k}{v}\big)-\sum_{v \in W_{-1}\setminus \{\mathsf{root}\}}\mbox{card}\big(\childn{k-1}{v}\big)
\eeq
Since $\childn{l}{\mathsf{root}} = \childn{l-1}{\child{\mathsf{root}}}$, it follows that $$\mbox{card}\big(\childn{l}{\mathsf{root}}\big) = \displaystyle \sum_{v \in \child{\mathsf{root}}} \mbox{card}\big(\childn{l-1}{v}\big)$$ for any positive integer $l.$
Therefore, $\sum_{v \in W_{-1}}\mbox{card}\big(\childn{k}{v}\big)$ is equal to
\beqn
 \mbox{card} \big(\childn{k}{\mathsf{root}}\big) + \displaystyle \sum_{v \in \child{\mathsf{root}}} \mbox{card}\big(\childn{k}{v}\big)+ \cdots +  \displaystyle \sum_{v \in \childn{k_\mathscr T-1}{\mathsf{root}}} \mbox{card}\big(\childn{k}{v}\big)\\
= \mbox{card}\big(\childn{k}{\mathsf{root}}\big) + \mbox{card}\big(\childn{k+1}{\mathsf{root}}\big)+ \cdots +  \mbox{card}\big(\childn{k+k_\mathscr T-1}{\mathsf{root}}\big).
\eeqn
Similarly, $\sum_{v \in W_{-1}\setminus \{\mathsf{root}\}}\mbox{card}\big(\childn{k-1}{v}\big)$ is equal to
\beqn
 \mbox{card}\big(\childn{k}{\mathsf{root}}\big)+\mbox{card}\big(\childn{k+1}{\mathsf{root}}\big)+ \cdots +  \mbox{card}\big(\childn{k-1+k_\mathscr T-1}{\mathsf{root}}\big).
\eeqn
Substituting last two identities in \eqref{quotnt}, we get
$$\dim \mathscr Q_k= \mbox{card}\big(\childn{k+k_\mathscr T-1}{\mathsf{root}}\big) \overset{\eqref{chi-ker}}= \dim E.$$
This completes the proof of the theorem.
\end{proof}
\begin{remark}
The identity \eqref{chi-ker}, as established in the proof of Theorem \ref{spectral}(v), comes surprisingly as a consequence of index theory.
As evident, this identity is otherwise difficult to disclose. Note that the left hand side of \eqref{chi-ker} is a variant dependent on $\mathscr T$ while the right hand side of \eqref{chi-ker} depends solely on $S_{\lambda}$.
Further, since $\dim E$ is finite, by Proposition \ref{probranch}, $\mathscr T$ is locally finite and $\mbox{card}(V_\prec) < \infty.$ Therefore, using \eqref{kernel} and \eqref{chi-ker}, one gets the following.
\beqn
\mbox{card}(\childn{k_{\mathscr T}}{\mathsf{root}}) = 1- \mbox{card}(V_\prec)+\sum_{v \in V_\prec}\mbox{card}(\child v).
\eeqn
\end{remark}

One particular consequence of Theorem \ref{spectral}(iv) is that $\sigma_{ap}(S_{\lambda})$ (resp. $\sigma_{e}(S_{\lambda})$) of a weighted shift $S_{\lambda}$ on a directed tree could be {\it disconnected}. 
For instance, in case $\dim E=2,$ by choosing the weight sequence $\lambda$ appropriately (so that the approximate point spectra of $S_1$ and $S_2$, as appearing in the proof of Theorem \ref{spectral}, are disjoint annuli), we can have two connected components of $\sigma_{ap}(S_{\lambda})$. Moreover, the index of $S_{\lambda}- w$ may vary from $-2$ to $0$ on different components of $\mathbb C \setminus  \sigma_e(S_{\lambda})$. Again, in the above situation, 
\beqn
\mbox{ind}\, (S_{\lambda} - w) =\begin{cases} -2~\mbox{on a bounded component of}~\mathbb C \setminus  \sigma_e(S_{\lambda})~\mbox{containing}~0  \\ 
-1~\mbox{on a bounded component of}~\mathbb C \setminus  \sigma_e(S_{\lambda})~\mbox{not containing}~0.
\end{cases}
\eeqn  
This is not possible in case $\dim E =1$ in view of \cite[Theorem 1]{R}.

The conclusion of Theorem \ref{spectral}(iv) need not be true in case $\dim E$ is infinite.

\begin{example} Let $\mathscr T_{\infty}$ be the directed tree as discussed in Example \ref{tree-inf} and let $S_{\lambda}$ be a left-invertible weighted shift on $\mathscr T_{\infty}.$
For a given $\mu > 0,$ choose the weight sequence of $S_{\lambda}$ such that for each $j \geq 0,$ the sequence $\{\lambda_{(i+1, j)}\}_{i \geq 0}$ converges to $\mu.$ As seen in the proof of Lemma \ref{decomposition}, $S_{\lambda}$ is a rank one perturbation of the direct sum of unilateral weighted shifts $S_j$ on $\mathcal H_j$. Thus $\sigma_e(S_{\lambda}) = \sigma_e(\oplus_{j=1}^{\infty}S_j).$ 
Note that $\sigma_e(S_j)=\sigma_{ap}(S_j)$ is the circle of radius $\mu$ centered at the origin \cite{S}. Since
$\sigma_p(S^*_j)=\mathbb D_{\mu}$,
the the essential spectrum of $\oplus_{j=1}^{\infty}S_j$ contains $\mathbb D_{\mu}$. As essential spectrum is always closed, $\overline{\mathbb D}_\mu \subseteq \sigma_e(\oplus_{j=1}^{\infty}S_j).$ 
Also, $$\sigma_e(\oplus_{j=1}^{\infty}S_j) \subseteq \sigma(\oplus_{j=1}^{\infty}S_j) = \overline{\mathbb D}_\mu.$$
This shows that $\sigma_e(S_{\lambda})=\overline{\mathbb D}_\mu.$ 
On the other hand, $0 \notin \sigma_{ap}(S_{\lambda})$ since $S_{\lambda}$ is left-invertible. In particular,
$\sigma_{ap}(S_{\lambda}) \neq \sigma_{e}(S_{\lambda}).$
\end{example}

We see below that $S_{\lambda}$ belongs to the Cowen-Douglas class (refer to \cite{C-D}; refer also to \cite{C-S} for the extended definition of $B_n(\Omega)$ in case $n$ is not finite).
\begin{corollary}\label{CD}
Let $S_\lambda \in B(l^2(V))$ be a left-invertible weighted shift on $\mathscr T$ and let $S_{\lambda'}$ denote the Cauchy dual of 
$S_{\lambda}$. Let $E:=\ker S_\lambda^*$ and $\delta :=\frac{1}{\|S_{\lambda'}\|}$. Then $S_\lambda^*$ belongs to Cowen-Douglas class 
$B_{\dim E}(\mathbb{D}_\delta)$.
\end{corollary}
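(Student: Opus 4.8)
The plan is to check directly the defining conditions of the Cowen--Douglas class $B_{\dim E}(\mathbb D_\delta)$ for the operator $S_\lambda^*$: for every $w\in\mathbb D_\delta$,
(a) $\ker(S_\lambda^*-w)\neq\{0\}$ (so that $\mathbb D_\delta\subseteq\sigma_p(S_\lambda^*)\subseteq\sigma(S_\lambda^*)$),
(b) $\mathrm{ran}(S_\lambda^*-w)=l^2(V)$,
(c) $\bigvee_{w\in\mathbb D_\delta}\ker(S_\lambda^*-w)=l^2(V)$, and
(d) $\dim\ker(S_\lambda^*-w)=\dim E$.
First note that $\delta=\tfrac1{\|S_{\lambda'}\|}\le\tfrac1{r(S_{\lambda'})}\le r_\lambda$ by Theorem \ref{thm1}, so $\mathbb D_\delta\subseteq\mathbb D_{r_\lambda}$ and the facts established for the analytic model on $\mathbb D_{r_\lambda}$ are available on $\mathbb D_\delta$.

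For (b) I would use the identity $S_{\lambda'}^*S_\lambda=I$ recorded before Theorem \ref{thm1}: it gives $\|x\|=\|S_{\lambda'}^*S_\lambda x\|\le\|S_{\lambda'}\|\,\|S_\lambda x\|$, i.e. $\|S_\lambda x\|\ge\delta\|x\|$ for all $x\in l^2(V)$. Hence, for $|w|<\delta$, $\|(S_\lambda-\overline w)x\|\ge(\delta-|w|)\|x\|$, so $S_\lambda-\overline w$ is bounded below; it is therefore injective with closed range, and consequently $\mathrm{ran}(S_\lambda^*-w)=\mathrm{ran}(S_\lambda^*-w)^{-}=\big(\ker(S_\lambda-\overline w)\big)^\perp=l^2(V)$. (Incidentally this shows $\mathbb D_\delta\cap\sigma_{ap}(S_\lambda)=\emptyset$, since $S_\lambda-\overline w$ is also injective by Theorem \ref{spectral}(i).) Condition (c) is then immediate from Theorem \ref{spectral}(iii), applied with $\epsilon=\delta$.

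The substantive point is (d). The key observation is that $\{S_\lambda^*-w\}_{w\in\mathbb D_\delta}$ is an analytic family of semi-Fredholm operators whose deficiency $\beta(S_\lambda^*-w)=\dim\operatorname{coker}(S_\lambda^*-w)$ vanishes identically, by the previous paragraph. By the classical stability of the index and of the nullity of semi-Fredholm operators under small perturbations (cf.\ \cite{Conway}), near any $w_0\in\mathbb D_\delta$ one has $\dim\ker(S_\lambda^*-w)=\dim\ker(S_\lambda^*-w_0)-d$ and $\beta(S_\lambda^*-w)=\beta(S_\lambda^*-w_0)-d$ for $w$ in a punctured neighbourhood of $w_0$, where $d\ge0$ is a fixed jump; since $\beta\equiv0$ this forces $d=0$, so $w\mapsto\dim\ker(S_\lambda^*-w)$ is locally constant on $\mathbb D_\delta$, hence constant by connectedness. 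Evaluating at $w=0$ gives $\dim\ker(S_\lambda^*-w)=\dim\ker S_\lambda^*=\dim E$ for every $w\in\mathbb D_\delta$; in particular $\dim\ker(S_\lambda^*-w)\ge1$ (e.g.\ $e_{\mathsf{root}}\in E$), which yields (a). Collecting (a)--(d) gives $S_\lambda^*\in B_{\dim E}(\mathbb D_\delta)$.

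I expect the one delicate step to be (d). If one wishes to avoid invoking semi-Fredholm perturbation theory in the case $\dim E=\infty$, one can instead supplement the argument with the lower bound $\dim\ker(\mathscr M_z^*-w)\ge\dim E$ coming from the reproducing-kernel eigenvectors $\kappa_{\mathscr H}(\cdot,w)g$, $g\in E$ (these are linearly independent because $g\mapsto\kappa_{\mathscr H}(\cdot,w)g$ is injective, the constant function $g$ belonging to $\mathscr H$), combined with upper semicontinuity of the nullity on the semi-Fredholm domain; the set $\{w\in\mathbb D_\delta:\dim\ker(S_\lambda^*-w)=\dim E\}$ is then open and closed in $\mathbb D_\delta$ and contains $0$, hence equals $\mathbb D_\delta$. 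In either presentation the computations themselves are routine; the content lies in the reduction to the analytic model, the estimate $\|S_\lambda x\|\ge\delta\|x\|$, and the already-proved spanning property of Theorem \ref{spectral}(iii).
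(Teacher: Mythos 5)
Your proof is correct, and on conditions (a)--(c) it coincides with the paper's: the estimate $\|S_\lambda x\|\ge\delta\|x\|$ (the paper gets it from $S_\lambda^*S_\lambda=(S_{\lambda'}^*S_{\lambda'})^{-1}\ge\delta^2 I$, you from $S_{\lambda'}^*S_\lambda=I$ --- the same bound), hence closed range and surjectivity of $S_\lambda^*-w$ on $\mathbb D_\delta$, and Theorem \ref{spectral}(iii) for the spanning property. The genuine divergence is in the count $\dim\ker(S_\lambda^*-w)=\dim E$. The paper argues by cases: for $\dim E<\infty$ it invokes Theorem \ref{spectral}(v), i.e.\ the explicit index computation coming from the decomposition of $S_\lambda$ into $\dim E$ unilateral weighted shifts (Lemma \ref{decomposition}), noting that $\mathbb D_\delta$ lies in the component of $\mathbb C\setminus\sigma_e(S_\lambda)$ containing $0$; for $\dim E=\infty$ it exhibits the eigenvectors $\kappa_{\mathscr H}(\cdot,w)g$ and proves that $g\mapsto\kappa_{\mathscr H}(\cdot,w)g$ is injective on $E$. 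Your semi-Fredholm stability argument handles both cases uniformly and bypasses the decomposition lemma entirely; since $\beta(S_\lambda^*-w)\equiv 0$ and the index is locally constant on the semi-Fredholm domain, $\alpha=\mathrm{ind}+\beta$ is locally constant, which is cleaner than the paper's two-track treatment. (When $\dim E=\infty$ the ``common jump $d$'' phrasing is awkward, but the index-plus-deficiency version of the argument, or your fallback via the kernel eigenvectors and upper semicontinuity of the nullity, covers it.) Your route also supplies something the paper leaves implicit in the infinite-dimensional case: injectivity of $g\mapsto\kappa_{\mathscr H}(\cdot,w)g$ only yields the lower bound $\dim\ker(\mathscr M_z^*-\overline w)\ge\dim E$, and the matching upper bound --- which your stability/semicontinuity argument provides --- is asserted rather than proved there. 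What the paper's approach buys in exchange is self-containedness: everything is reduced to facts already established about $S_\lambda$ (Theorem \ref{spectral} and the analytic model) rather than to external perturbation theory for semi-Fredholm families.
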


\begin{proof}
Since $S_\lambda$ is left-invertible, 
$$(S_\lambda^*S_\lambda)^{-1}
=S_{\lambda'}^*S_{\lambda'}\leq\|S_{\lambda'}^*S_{\lambda'}\|I.$$
That is, $S_\lambda^*S_\lambda\geq\frac{1}{\|S_{\lambda'}\|^2}I= \delta^2 I$, which gives 
$\|S_\lambda f\|\geq\delta\|f\|~\mbox{for~ all~} f\in 
l^2(V).$
Therefore, $\sigma_{ap}(S_{\lambda}) \cap \mathbb D_{\delta} = \emptyset.$
It follows that
for 
all $w \in \mathbb{D}_\delta$, $\ker (S_\lambda-w)=\{0\}$ and  
$\mbox{ran}\, (S_\lambda-w)$ is closed. Hence  
$\mbox{ran}\, (S^*_\lambda-w)$ is dense in $\mathscr 
H$ for all 
$w\in\mathbb{D}_\delta$. Since $\mbox{ran}\, (S_\lambda-w)$ is closed, it 
follows 
that $\mbox{ran}\, (S^*_\lambda-\overline{w})$ is closed 
\cite[Chapter XI, Section 6]{Conway}, and hence
$\mbox{ran}\, (S^*_\lambda-\overline{w})=l^2(V)$ for all 
$w\in\mathbb{D}_\delta$. In case $\dim E < \infty$, 
the desired conclusion follows from (iii) and (v) of Theorem \ref{spectral}. 

Suppose now the case in which $\dim E$ is not finite.
Consider the analytic model $({\mathscr 
M}_z, 
\kappa_{\mathscr H},\mathscr H)$  of $S_{\lambda}.$ 
We show that 
$$\{\kappa_{\mathscr H}(\cdot,w)g_i:i=1,\cdots,\ k\}$$
is linearly independent in $\ker (\mathscr M^*_z - \overline{w})$ whenever $\{g_i:i=1,\cdots,\ k\}$ is linearly independent in $E$ for every integer $k \geq 1$ and $w \in \mathbb D_{r_{\lambda}}$. To this end, suppose that 
$\kappa_{\mathscr H}(\cdot,w)g=0$ for some $g\in E$. Then 
$\langle 
U_g(w),g\rangle = \langle U_g,\kappa_{\mathscr H}(\cdot,w)g\rangle=0$. However, by \eqref{Uf}, $U_g=g$ for any $g \in E.$ It follows that $g=0$, and $\dim \ker  
({\mathscr M}_z^*-\overline{w})=\dim E$ for all $w \in \mathbb{D}_{r_{\lambda}}$. 
\end{proof}


\section{A model for weighted shifts on rootless directed trees}

In this short section, we show a way to generalize the main result of this paper to the setting of rootless directed trees.
One interest in the theory of weighted shifts on rootless directed trees is due to the fact that these are composition operators in disguise (see \cite[Lemma 4.3.1]{JBS-1}).

We begin with a counter-part of branching index for rootless directed trees.
\begin{definition}
Let $\mathscr T=(V, \mathcal E)$ be a rootless directed tree and let $V_{\prec}$ be the set of branching vertices of $\mathscr T$.
We say that $\mathscr T$ has {\it finite branching index} if there exists
a smallest non-negative integer $m_{\mathscr T}$ such that
$$\childn{k}{V_{\prec}} \cap V_{\prec} = \emptyset~\mbox{for every integer~} k \geq m_{\mathscr T}.$$   
\end{definition}

The role of $\mathsf{root}$ in the notion of the branching index of a rooted directed tree is taken by a special vertex in the context of rootless directed trees with finite branching index as shown below.
\begin{lemma} \label{root}
Let $\mathscr T=(V, \mathcal E)$ be a rootless directed tree with finite branching index $m_{\mathscr T}$.  
Then there exists a vertex $\rot \in V$ such that \beq \label{rot} \mbox{card}(\child{\parentn{k}{\rot}})=1~\mbox{for all integers}~k \geq 1.\eeq Moreover, if $V_{\prec}$ is non-empty then there exists a unique $\rot \in V_{\prec}$ satisfying \eqref{rot}. 
\end{lemma}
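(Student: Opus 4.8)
The plan is to exploit the finite branching index to locate a vertex beyond which, along the ancestral chain, no further branching occurs. First I would observe that since $\mathscr T$ is rootless, every vertex has a parent, so $\parentn{k}{v}$ is a well-defined vertex for every $v \in V$ and every $k \geq 0$; moreover, in a directed tree each vertex has a unique parent, so the ancestral chain $v, \parent v, \parentn{2}{v}, \dots$ is an infinite sequence of distinct vertices (distinctness follows from the absence of circuits). The key point is to rule out that this chain meets $V_{\prec}$ infinitely often.

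The main argument splits according to whether $V_{\prec}$ is empty. If $V_{\prec} = \emptyset$, then $\mathscr T$ is leafless with every vertex having exactly one child, and any $\rot \in V$ works trivially since $\child{w}$ is a singleton for all $w$. If $V_{\prec} \neq \emptyset$, fix any branching vertex $w_0 \in V_{\prec}$ and set $\rot := \parentn{m_{\mathscr T}}{w_0}$. I claim $\rot \in V_{\prec}$ and satisfies \eqref{rot}. To see that $\parentn{k}{\rot} \notin V_{\prec}$ for $k \geq 1$: note $\parentn{k}{\rot} = \parentn{m_{\mathscr T}+k}{w_0}$, and if this vertex $u$ were in $V_{\prec}$, then $w_0 \in \childn{m_{\mathscr T}+k}{u}$ with $m_{\mathscr T}+k \geq m_{\mathscr T}$, so $w_0 \in \childn{m_{\mathscr T}+k}{V_{\prec}} \cap V_{\prec}$, contradicting the definition of $m_{\mathscr T}$. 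Since $\parentn{k}{\rot} \notin V_{\prec}$ means $\mbox{card}(\child{\parentn{k}{\rot}}) \leq 1$, and since $\mathscr T$ is leafless forces $\mbox{card}(\child{\parentn{k}{\rot}}) \geq 1$, we get \eqref{rot}. It remains to show such a $\rot$ can be chosen in $V_{\prec}$ and is unique there. For membership in $V_{\prec}$: among all vertices satisfying \eqref{rot}, one wants the "highest" branching one. A clean way is to take $\rot$ to be an element of $V_{\prec}$ of minimal $n_w$-type distance — but since there is no root, instead I would argue directly: if $\rot$ as constructed above is not itself branching, walk down from $\rot$ towards $w_0$ along the unique path $\rot = \parentn{m_{\mathscr T}}{w_0}, \parentn{m_{\mathscr T}-1}{w_0}, \dots, w_0$; let $\rot'$ be the first branching vertex encountered (which exists since $w_0$ is branching). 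Then $\rot' \in V_{\prec}$, and for $k \geq 1$, $\parentn{k}{\rot'}$ lies strictly above $\rot'$ on this path towards $\rot$, hence is a non-branching vertex by minimality of the choice, so \eqref{rot} holds for $\rot'$.

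For uniqueness: suppose $\rot_1, \rot_2 \in V_{\prec}$ both satisfy \eqref{rot} and $\rot_1 \neq \rot_2$. Since $\mathscr T$ is a connected tree, there is a unique undirected path between them; using that every vertex has a parent and the tree has no circuits, one of them, say $\rot_1$, is an ancestor of the other, i.e. $\rot_1 = \parentn{k}{\rot_2}$ for some $k \geq 1$ — or they have a common ancestor that is a descendant of neither, but that common-ancestor vertex would then be reached from $\rot_2$ by iterating $\parent$ a positive number of times, forcing a non-branching vertex on $\rot_2$'s chain to have at least two children (the two subtrees branching off towards $\rot_1$ and $\rot_2$), a contradiction. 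In the remaining case $\rot_1 = \parentn{k}{\rot_2}$ with $k \geq 1$: then $\rot_1 = \parentn{k}{\rot_2}$ is required by \eqref{rot} applied to $\rot_2$ to be non-branching, contradicting $\rot_1 \in V_{\prec}$.

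The main obstacle I anticipate is the bookkeeping around "ancestor" relationships in a rootless tree: without a root, one cannot use the index $n_w$ from the rooted case, so the comparison of two vertices' positions must be done via the uniqueness of paths in a tree (Proposition~2.1.1-type facts in \cite{Jablonski}) together with the fact that in a directed tree the underlying undirected graph is a tree and iterating $\parent$ never cycles. Once that structural fact is pinned down — that for any two vertices $u,v$ either one is an iterated parent of the other or they have a unique "meeting" ancestor — both the existence refinement (moving $\rot$ into $V_{\prec}$) and the uniqueness become short contradiction arguments using only the defining property of $m_{\mathscr T}$ and leaflessness.
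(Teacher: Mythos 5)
Your proof is correct and rests on the same mechanism as the paper's: the defining property of $m_{\mathscr T}$ forbids a branching vertex from having a branching ancestor $m_{\mathscr T}$ or more generations up, and connectedness of $\mathscr T$ gives uniqueness. The only difference is organizational --- you obtain existence by the direct construction $\rot:=\parentn{m_{\mathscr T}}{w_0}$ followed by walking down to the first branching vertex, whereas the paper argues by contradiction via an inductively built ascending chain in $V_{\prec}$; and for uniqueness you locate a branching "meeting" ancestor on the undirected path, while the paper decomposes $\mathscr T$ into disjoint subtrees --- but these are presentational variants of the same argument.
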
 
\begin{proof} 
In case $V_{\prec}=\emptyset$, then every vertex of $V$ satisfies \eqref{rot}. Therefore, we may assume that $V_{\prec}$ contains at least one vertex, say, $u_0$.

On contrary, assume that for every $u \in V_{\prec}$ there exists a positive integer $k_u$ (depending on $u$) such that $$\mbox{card}(\child{\parentn{k_u}{u}})=0~\mbox{or~}\mbox{card}(\child{\parentn{k_u}{u}}) \geq 2.$$ Since $\mathscr T$ is rootless, the first case can not occur. Hence $\mbox{card}(\child{\parentn{k_u}{u}}) \geq 2,$ that is, 
$\parentn{k_u}{u} \in V_{\prec}.$
Define inductively $\{u_n\}_{n \geq 0} \subseteq V_{\prec}$ as follows. By assumption, there exists an integer $k_{u_0} \geq 1$ such that $u_1:={\parentn{k_{u_0}}{u_0}} \in V_{\prec}.$
By finite induction, there exist integers $k_{u_1}, \cdots, k_{u_{n-1}} \geq 1$ such that $$u_n:={\parentn{k_{u_0} + k_{u_1} \cdots + k_{u_{n-1}}}{u_0}} \in V_{\prec}.$$
In case $n > m_{\mathscr T},$ $u_0 \in \childn{k_{u_0} + k_{u_1} \cdots + k_{u_{n-1}}}{V_{\prec}} \cap V_{\prec}.$ This is not possible since $\childn{k}{V_{\prec}} \cap V_{\prec} = \emptyset$ for all integers $k \geq m_{\mathscr T}.$ 

To see the uniqueness part, suppose that there exist distinct vertices $\{\rot_i\}_{i=1}^{N}$ in $V_{\prec}$ satisfying \eqref{rot}, where either $N$ is a positive integer bigger than $1$ or $N$ is infinite.
It is easy to see with the help of \eqref{rot} that for integers $i \neq j,$ 
$\parentn{k_1}{\rot_i} \neq \parentn{k_2}{\rot_j}$ for any non-negative integers $k_1$ and $k_2.$ One may now easily verify that $\mathscr T$ has the separation $\mathscr T = \sqcup_{i=1}^N \mathscr T_i,$ where
\beqn
\mathscr T_i = \big(\cup_{ k \geq 1} \childn{k}{\rot_i}\big) \cup  \big\{\parentn{k}{\rot_i} : k \geq 0 \big\}.  
\eeqn
Since $\mathscr T$ is connected, we arrive at a contradiction.
\end{proof}


Note that a rootless directed tree $\mathscr T_0$ with empty $V_{\prec}$ is isomorphic to the directed tree with set of vertices $\mathbb Z$ and $\child n = \{n+1\}$ for $n \in \mathbb Z.$ As it is well-known that any weighted shift on $\mathscr T_0$ (to be referred to as {\it bilateral weighted shift}) can be modelled as the operator of multiplication by $z$ on a Hilbert space of formal Laurent series \cite[Proposition 7]{S}, we assume in  
the remaining part of this section that $V_{\prec}$ is non-empty.

We refer to the vertex $\rot \in V_{\prec}$ appearing in the statement of 
Lemma \ref{root} as the {\it generalized root} of $\mathscr T.$
The generalized root may not exist in general. For example,
consider the directed tree $\mathscr T$ with set of vertices
$V=\mathbb Z \times \mathbb Z$ such that
\begin{center}
$\mathsf{Chi}(i,j)=\begin{cases}
                   \{(i,j+1)\}& \text{if}\ j \neq 0,\\
                   \{(i,j+1), (i+1, j)\}& \text{if}\ j=0
                  \end{cases}$
\end{center}
(cf. \cite[Example 4.4]{Ja}).
In this case, $V_{\prec}= \{(i, 0) : i \in \mathbb Z\},$ and hence the set $\child{\parentn{k}{(i, 0)}}$ contains precisely two vertices for any integer $k \geq 1$.

With the notion of generalized root, we immediately obtain the following.
\begin{lemma} \label{rootless}
Let $\mathscr T=(V, \mathcal E)$ be a rootless directed tree with finite branching index $m_{\mathscr T}$ and generalized root $\rot.$
Let $S_{\lambda} \in B(l^2(V))$ be a weighted shift on $\mathscr T.$ 
Let $V^{(2)}:=\{v_k:=\parentn{k}{\rot} : k \geq 1\}$ and let $V^{(1)}:=V \setminus V^{(2)}.$ Let $\mathscr T^{(1)}$ and ${\mathscr T^{(2)}}$ be the directed subtrees corresponding to the sets of vertices $V^{(1)}$ and $V^{(2)}$ respectively.
Then $S_{\lambda}$ admits the following decomposition:
\beq \label{r-less-deco}
S_{\lambda}=\left[\begin{array}{cc}
T_{\lambda} &  \lambda_{\rot} e_{\rot} \otimes e_{v_1} \\
0 & B_{\lambda}     
\end{array}\right] ~\mbox{on~}l^2(V) = l^2(V^{(1)}) \oplus l^2(V^{(2)}),
\eeq
where $T_{\lambda} \in B(l^2(V^{(1)}))$ is a weighted shift on the rooted directed tree $\mathscr T^{(1)}$ with root $\rot$ and finite branching index $k_{\mathscr T^{(1)}}=m_{\mathscr T}$, and $B_{\lambda}  \in B(l^2(V^{(2)})$ is the backward unilateral weighted shift given by \beqn B_{\lambda} e_{v_k}=\begin{cases}
0&~\mbox{if~}k=1\\
\lambda_{v_{k-1}}e_{v_{k-1}}&~\mbox{if~}k \geq 2.
\end{cases}
\eeqn
\end{lemma}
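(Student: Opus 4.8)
The statement is essentially a bookkeeping consequence of Lemma~\ref{root}, and I would split it into three parts: identifying the vertex set $V^{(1)}$ and the chain $V^{(2)}$, establishing the block form \eqref{r-less-deco} together with the fact that $T_{\lambda}$ is a weighted shift on a rooted directed tree, and proving $k_{\mathscr T^{(1)}}=m_{\mathscr T}$. First I would analyze the ancestor chain above $\rot$. Put $v_0:=\rot$. Since $\mathscr T$ is rootless every vertex has a parent, so $v_k:=\parentn{k}{\rot}$ is defined for all $k\geq 0$, and the $v_k$ are pairwise distinct, as $\parentn{k}{\rot}=\parentn{j}{\rot}$ with $k<j$ would produce a circuit in $\mathscr T$; hence $l^2(V^{(2)})$ is countably infinite-dimensional. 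By \eqref{rot}, $\mbox{card}(\child{v_k})=1$ for every $k\geq 1$, and since $v_{k-1}\in\child{v_k}$ this forces $\child{v_k}=\{v_{k-1}\}$. I would then show that $V^{(1)}$ is exactly the descendant set of $\rot$, i.e.\ $V^{(1)}=\bigcup_{n\geq 0}\childn{n}{\rot}$: the inclusion $\supseteq$ holds because the sets $\childn{n}{\rot}$ are disjoint from $V^{(2)}$ (again a circuit argument), while for $\subseteq$ I would invoke the standard fact, implicit in \cite{Jablonski}, that connectedness of a directed tree forces any two vertices to have a common iterated parent, and combine it with $\child{v_k}=\{v_{k-1}\}$ to trace an arbitrary vertex back either to $\rot$ or to some $v_k\in V^{(2)}$. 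This is the only step that requires genuine care; the rest is routine verification.

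With these facts in hand the block form is immediate. For $u\in V^{(1)}$ one has $\child{u}\subseteq V^{(1)}$, since a child lying in $V^{(2)}=\{v_k:k\geq 1\}$ would force $u=v_{k+1}\in V^{(2)}$; hence $l^2(V^{(1)})$ is $S_{\lambda}$-invariant and the $(2,1)$ block vanishes, so $T_{\lambda}:=S_{\lambda}|_{l^2(V^{(1)})}$ acts by $T_{\lambda}e_u=\sum_{v\in\child{u}}\lambda_v e_v$ for $u\in V^{(1)}$. For $u=v_k\in V^{(2)}$ we have $S_{\lambda}e_{v_k}=\lambda_{v_{k-1}}e_{v_{k-1}}$, which lies in $l^2(V^{(2)})$ when $k\geq 2$ and equals $\lambda_{\rot}e_{\rot}\in l^2(V^{(1)})$ when $k=1$; reading off the $l^2(V^{(2)})$- and $l^2(V^{(1)})$-components gives the $(2,2)$ block $B_{\lambda}$ as stated and the $(1,2)$ block $\lambda_{\rot}\,e_{\rot}\otimes e_{v_1}$. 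All four entries are bounded, being a restriction to an invariant subspace, a compression, the zero operator and a rank-one operator. Moreover $\mathscr T^{(1)}$ is a rooted directed tree with root $\rot$: indeed $\parent{\rot}=v_1\notin V^{(1)}$, whereas any $u\in V^{(1)}\setminus\{\rot\}$ retains its $\mathscr T$-parent inside $V^{(1)}$ (if $\parent{u}=v_k$ then $u\in\child{v_k}=\{v_{k-1}\}$, forcing $u=v_{k-1}\in V^{(2)}$ for $k\geq 2$ and $u=\rot$ for $k=1$), and $\mathscr T^{(1)}$ inherits leaflessness and countability from $\mathscr T$; since $V^{(1)}\setminus\{\rot\}\subseteq V^{\circ}$ the weights remain positive, so $T_{\lambda}$ is precisely the weighted shift on $\mathscr T^{(1)}$.

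Finally I would prove $k_{\mathscr T^{(1)}}=m_{\mathscr T}$. The branching vertices of $\mathscr T^{(1)}$ coincide with those of $\mathscr T$, namely $V_{\prec}$, because no $v_k\in V^{(2)}$ is branching (each has a single child), and in particular $\rot\in V_{\prec}$ lies at generation $0$ of $\mathscr T^{(1)}$. Writing $n_w$ for the generation of $w\in V_{\prec}$ in $\mathscr T^{(1)}$, so that $w\in\childn{n_w}{\rot}$, I would check the equality $\sup\{n_w:w\in V_{\prec}\}=\sup\{k\geq 0:\childn{k}{V_{\prec}}\cap V_{\prec}\neq\emptyset\}$: for the forward inequality, $w\in\childn{n_w}{\rot}$ with $\rot\in V_{\prec}$ places $n_w$ in the right-hand set; for the reverse, if $w\in\childn{k}{u}$ with $u,w\in V_{\prec}$ and $u\in\childn{n_u}{\rot}$, then $w\in\childn{k+n_u}{\rot}$, whence $n_w\geq k$. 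Since $\rot\in\childn{0}{V_{\prec}}\cap V_{\prec}$ the right-hand supremum is nonnegative, and finiteness of $m_{\mathscr T}$ together with its minimality shows that $m_{\mathscr T}$ equals this supremum plus one; hence $k_{\mathscr T^{(1)}}=1+\sup\{n_w:w\in V_{\prec}\}=m_{\mathscr T}$, which is in particular finite.

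I expect the main obstacle to be precisely the identification $V^{(1)}=\bigcup_{n\geq 0}\childn{n}{\rot}$, equivalently the verification that $\mathscr T^{(1)}$ is genuinely a rooted directed tree; once this is settled the block decomposition, the positivity of the weights of $T_{\lambda}$, and the index bookkeeping all fall out mechanically from Lemma~\ref{root} and the definitions.
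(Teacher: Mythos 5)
Your proposal is correct and follows essentially the same route as the paper: the block upper-triangularity via invariance of $l^2(V^{(1)})$ under $S_{\lambda}$ (equivalently, the paper's invariance of $l^2(V^{(2)})$ under $S^*_{\lambda}$), the identification of the off-diagonal corner as $\lambda_{\rot}\, e_{\rot}\otimes e_{v_1}$ and of the $(2,2)$ block as the backward shift, and the identity $k_{\mathscr T^{(1)}}=1+\sup\{n_w:w\in V_{\prec}\}=m_{\mathscr T}$. You simply supply more detail than the paper does on the points it treats as evident, namely that $V^{(1)}=\bigcup_{n\geq 0}\childn{n}{\rot}$ and that $\mathscr T^{(1)}$ is a rooted directed tree with root $\rot$.
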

\begin{proof}
Note that $\mathscr T^{(1)}$ is a rooted directed tree with root $\rot.$ Also, 
the set $V_{\prec}$ of branching vertices of $\mathscr T$ is contained in $V^{(1)}$ as
$V^{(2)} \cap V_{\prec} = \emptyset.$
It follows that $k_{\mathscr T^{(1)}}=1+\sup\{n_v : v \in V_{\prec}\}.$ Since $m_{\mathscr T}$ is the smallest integer such that 
$\childn{m_{\mathscr T}}{V_{\prec}} \cap V_{\prec} = \emptyset,$ we must have $\sup\{n_v : v \in V_{\prec}\}=m_{\mathscr T}-1.$
This shows that $\mathscr T^{(1)}$ has branching index precisely $m_{\mathscr T}$.

Since $S^*_{\lambda}e_{v_k} = \lambda_{v_k}e_{v_{k+1}}$,
$l^2(V^{(2)})$ is invariant under $S^*_{\lambda}$. This gives us the decomposition \beqn
S_{\lambda}=\left[\begin{array}{cc}
S_{\lambda}|_{l^2(V^{(1)})} &  P_1S_{\lambda}|_{l^2(V^{(2)})}\\
0 & P_{2}S_{\lambda}|_{l^2(V^{(2)})}     
\end{array}\right] ~\mbox{on~}l^2(V) = l^2(V^{(1)}) \oplus l^2(V^{(2)}),
\eeqn
where $P_i$ denotes the orthogonal projection of $l^2(V)$ onto ${l^2(V^{(i)})}$ for $i=1, 2.$
It is easy to see that $P_{1}S_{\lambda}|_{l^2(V^{(2)})}$ is the rank one operator $\lambda_{\rot} e_{\rot} \otimes e_{v_1}.$ That $P_{2}S_{\lambda}|_{l^2(V^{(2)})} =B_{\lambda}$ is also a routine verification. 
\end{proof}
\begin{remark}
Note that every weighted shift on a rootless directed tree with finite branching index is an extension of a weighted shift on a rooted direct tree with finite branching index.
\end{remark}

We illustrate the result above with the help of the following simple example.

\begin{example}
Consider the directed tree $\mathscr T$ with set of vertices 
$$V:=\{(1,i), (2,i):i\geq1\} \cup \{-k : k \geq 0\}.$$ We further require that $
\mathsf{Chi}(-k) = -(k-1)$ if $k \geq 1,$
$\mathsf{Chi}(0)=\{(1,1),(2,1)\}$ and 
\[\mathsf{Chi}(1,i)=\{(1,i+1)\},\
\mathsf{Chi}(2,i)=\{(2,i+1)\},\ \text{for all}\ i\geq1. \]
In this case, the branching index $m_{\mathscr T}=1$ and the generalized root $\rot$ is $0.$ Also, $V^{(1)} =\{(1,i), (2,i):i\geq1\} \cup \{0\}$ and $V^{(2)} = \{-k : k \geq 1\}.$ 
The weighted shift $T_{\lambda}$ on $\mathscr T^{(1)}$ as defined in the last lemma can be identified with the weighted shift on the directed tree $\mathscr T_2$ (with root $0$) as discussed in Example \ref{tridiagonal}. Further, the rank one operator $\lambda_{\rot} e_{\rot} \otimes e_{v_1}$ is precisely
$\lambda_0 e_0 \otimes e_{-1}$. Finally,
the backward unilateral weighted shift $B_{\lambda}$ can be identified with the adjoint of the weighted shift on the directed tree $\mathscr T_1$ (with root $-1$) as discussed in Example \ref{diagonal}.
\end{example}

We now present a counter-part of Theorem \ref{thm1} for rootless directed trees.
\begin{theorem} \label{deco-rootless}
Let $\mathscr T=(V, \mathcal E)$ be a rootless directed tree with finite branching index and generalized root $\rot.$
Let $S_{\lambda} \in B(l^2(V))$ be a left-invertible weighted shift on $\mathscr T.$ Then there exist a Hilbert space $\mathscr H$ of vector-valued Holomorphic functions in $z$ defined on a disc in $\mathbb C$, and a Hilbert space $\mathcal H$ of scalar-valued holomorphic functions in $t$ defined on a disc in $\mathbb C$ such that $S_{\lambda}$ is unitarily equivalent to
\beqn
\left[\begin{array}{cc}
{\mathscr M}_z &  f \otimes g \\
0 & M^*_t     
\end{array}\right] ~\mbox{on~}  \mathscr H \oplus \mathcal H,
\eeqn
where ${\mathscr M}_z$ is the operator of multiplication by $z$ on $\mathscr H,$ $f \otimes g$ is a rank one operator with
$f \in \ker \mathscr M^*_z \setminus \{0\},$ $g \in \ker M^*_t \setminus \{0\},$ and
$M_t$ is the operator
of multiplication by the co-ordinate function $t$ on $\mathcal H.$    
\end{theorem}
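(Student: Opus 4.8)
The plan is to feed the block-triangular form of $S_{\lambda}$ supplied by Lemma~\ref{rootless} into the analytic model of Theorem~\ref{thm1} for the rooted block and into the classical scalar weighted shift model for the backward block, and then to reassemble everything with a block-diagonal unitary. Recall from Lemma~\ref{rootless} that, with $V^{(2)}:=\{v_k:=\parentn{k}{\rot}:k\geq1\}$ and $V^{(1)}:=V\setminus V^{(2)}$,
\beqn
S_{\lambda}=\left[\begin{array}{cc} T_{\lambda} & \lambda_{\rot}\, e_{\rot}\otimes e_{v_1} \\ 0 & B_{\lambda} \end{array}\right]~\mbox{on~}l^2(V)=l^2(V^{(1)})\oplus l^2(V^{(2)}),
\eeqn
where $T_{\lambda}$ is a weighted shift on the rooted directed tree $\mathscr T^{(1)}$ with root $\rot$ and finite branching index, and $B_{\lambda}$ is the backward unilateral weighted shift with $B_{\lambda}e_{v_1}=0$ and $B_{\lambda}e_{v_k}=\lambda_{v_{k-1}}e_{v_{k-1}}$ for $k\geq2$.

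First I would verify that both $T_{\lambda}$ and $B_{\lambda}^{*}$ are left-invertible. Since $\mathscr T^{(1)}$ inherits the child sets of $\mathscr T$ on $V^{(1)}$, Lemma~\ref{facts}(iii) gives $T_{\lambda}^{*}T_{\lambda}e_u=\|S_{\lambda}e_u\|^2e_u$ for $u\in V^{(1)}$; and $B_{\lambda}^{*}e_{v_k}=\lambda_{v_k}e_{v_{k+1}}$, so $(B_{\lambda}^{*})^{*}B_{\lambda}^{*}e_{v_k}=\lambda_{v_k}^2e_{v_k}=\|S_{\lambda}e_{v_{k+1}}\|^2e_{v_k}$. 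As $S_{\lambda}^{*}S_{\lambda}=\mathrm{diag}\big(\|S_{\lambda}e_u\|^2\big)_{u\in V}$ is bounded below by hypothesis, so are $T_{\lambda}^{*}T_{\lambda}$ and $(B_{\lambda}^{*})^{*}B_{\lambda}^{*}$. Then I would apply Theorem~\ref{thm1} to $T_{\lambda}$, obtaining a unitary $U_1:l^2(V^{(1)})\rar\mathscr H$ with $\mathscr M_zU_1=U_1T_{\lambda}$, where $\mathscr H$ is a reproducing kernel Hilbert space of $E_1$-valued holomorphic functions on a disc, $E_1:=\ker T_{\lambda}^{*}$, and $U_1$ carries $E_1$ onto $\ker\mathscr M_z^{*}$ (the $E_1$-valued constant functions). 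Since $B_{\lambda}^{*}$ is a left-invertible unilateral weighted shift — a weighted shift on the tree $\mathscr T_1$ of Example~\ref{diagonal} — it is unitarily equivalent to the operator $M_t$ of multiplication by $t$ on a reproducing kernel Hilbert space $\mathcal H$ of scalar-valued holomorphic functions on a disc; passing to adjoints gives a unitary $U_2:l^2(V^{(2)})\rar\mathcal H$ with $U_2B_{\lambda}=M_t^{*}U_2$.

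Next I would set $U:=U_1\oplus U_2:l^2(V)\rar\mathscr H\oplus\mathcal H$ and compute, using the elementary identity $A(x\otimes y)B^{*}=(Ax)\otimes(By)$,
\beqn
US_{\lambda}U^{*}=\left[\begin{array}{cc} U_1T_{\lambda}U_1^{*} & \lambda_{\rot}\,(U_1e_{\rot})\otimes(U_2e_{v_1}) \\ 0 & U_2B_{\lambda}U_2^{*} \end{array}\right]=\left[\begin{array}{cc} \mathscr M_z & f\otimes g \\ 0 & M_t^{*} \end{array}\right],
\eeqn
with $f:=\lambda_{\rot}U_1e_{\rot}$ and $g:=U_2e_{v_1}$. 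It then remains to check the two nonvanishing claims. Because $\rot$ is the root of $\mathscr T^{(1)}$, Lemma~\ref{facts}(ii) gives $T_{\lambda}^{*}e_{\rot}=0$, so $e_{\rot}\in E_1$ and hence $f\in U_1(E_1)=\ker\mathscr M_z^{*}$; as $\lambda_{\rot}>0$ and $U_1$ is unitary, $f\neq0$. Similarly $B_{\lambda}e_{v_1}=0$ forces $M_t^{*}g=M_t^{*}U_2e_{v_1}=U_2B_{\lambda}e_{v_1}=0$, so $g\in\ker M_t^{*}\setminus\{0\}$. This is precisely the asserted form.

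The argument is essentially bookkeeping once Lemma~\ref{rootless} and Theorem~\ref{thm1} are in hand, so I do not expect a genuine obstacle. The two points that demand a little care are the transfer of left-invertibility to the two diagonal blocks — which rests on the diagonal form of $S_{\lambda}^{*}S_{\lambda}$ from Lemma~\ref{facts}(iii) — and the adjoint bookkeeping, namely that the backward shift $B_{\lambda}$ has to be modelled by first modelling the forward shift $B_{\lambda}^{*}$ and then dualising, which is why the lower-right entry is $M_t^{*}$ rather than $M_t$.
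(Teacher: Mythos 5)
Your proposal is correct and follows the same route as the paper: the paper's proof simply invokes Lemma \ref{rootless} for the block-triangular form, notes that left-invertibility of $S_{\lambda}$ passes to $T_{\lambda}$ and $B_{\lambda}^{*}$, and then applies Theorem \ref{thm1} (to the rooted block, and via the unilateral-shift case to $B_{\lambda}^{*}$). Your write-up merely supplies the bookkeeping — the conjugation of the rank-one corner and the verification that $f$ and $g$ land in the respective kernels — which the paper leaves implicit.
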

\begin{proof}
By Lemma \ref{rootless}, $S_{\lambda}$ admits the decomposition \eqref{r-less-deco}. Since $S_{\lambda}$ is left-invertible, so are $T_{\lambda}$ and $B^*_{\lambda}.$ The desired decomposition
now follows immediately from Theorem \ref{thm1}.
\end{proof}
\begin{remark}
A routine calculation shows that the self-commutator $[S^*_{\lambda}, S_{\lambda}]:=S^*_{\lambda}S_{\lambda} - S_{\lambda}S^*_{\lambda}$ of $S_{\lambda}$ (upto unitary equivalence) is equal to 
\beqn
\left[\begin{array}{cc}
[{\mathscr M}^*_z, \mathscr M_z] - f\otimes f &  0 \\
0 & g\otimes g - [M^*_t, M_t]     
\end{array}\right].
\eeqn
In particular, $[S^*_{\lambda}, S_{\lambda}]$ is compact if and only if so are $[\mathscr M^*_z, \mathscr M_z]$ and $[M^*_t, M_t].$ 
\end{remark}

We conclude this paper with one application to the spectral theory of weighted shifts on rootless directed trees (cf. \cite[Theorem 2.3]{C}).

\begin{corollary} With the hypotheses and notations of Theorem \ref{deco-rootless}, we have \beqn \sigma_e(S_{\lambda}) = \sigma_e(\mathscr M_z) \cup \sigma_e(M^*_t).\eeqn
If, in addition, $S_{\lambda}$ is Fredholm then so are $\mathscr M_z$ and $M_t.$ In this case, \beqn \mbox{ind}\,S_{\lambda} = \mbox{ind}\,\mathscr M_z + 1.\eeqn
\end{corollary}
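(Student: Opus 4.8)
The plan is to exploit the operator matrix decomposition furnished by Theorem \ref{deco-rootless}: up to unitary equivalence $S_{\lambda}$ is the block upper triangular operator with diagonal entries $\mathscr M_z$ and $M^*_t$ and off-diagonal corner the rank one operator $f \otimes g$ on $\mathscr H \oplus \mathcal H$. Since $f \otimes g$ is compact, $S_{\lambda}$ is a compact perturbation of the block diagonal operator $\mathscr M_z \oplus M^*_t$. First I would invoke the invariance of the essential spectrum under compact perturbations to get $\sigma_e(S_{\lambda}) = \sigma_e(\mathscr M_z \oplus M^*_t)$, and then use the identity $\sigma_e(A \oplus B) = \sigma_e(A) \cup \sigma_e(B)$ (which follows because $A \oplus B - w$ is Fredholm exactly when both $A-w$ and $B-w$ are) to conclude $\sigma_e(S_{\lambda}) = \sigma_e(\mathscr M_z) \cup \sigma_e(M^*_t)$. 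This first assertion requires no Fredholmness hypothesis.

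For the second part, if $S_{\lambda}$ is Fredholm then $0 \notin \sigma_e(S_{\lambda}) = \sigma_e(\mathscr M_z) \cup \sigma_e(M^*_t)$, so $0$ lies outside both $\sigma_e(\mathscr M_z)$ and $\sigma_e(M^*_t)$; hence $\mathscr M_z$ and $M^*_t$, and therefore also $M_t$, are Fredholm. Using the invariance of the Fredholm index under compact perturbations together with its additivity over direct sums, $\mbox{ind}\, S_{\lambda} = \mbox{ind}\,(\mathscr M_z \oplus M^*_t) = \mbox{ind}\, \mathscr M_z + \mbox{ind}\, M^*_t$.

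It remains to pin down $\mbox{ind}\, M^*_t$. By the construction underlying Theorem \ref{deco-rootless} (namely Theorem \ref{thm1} applied to the left-invertible weighted shift $B^*_{\lambda}$ of Lemma \ref{rootless}, a weighted shift on a rooted directed tree of branching index $0$), $M_t$ is unitarily equivalent to $B^*_{\lambda}$, so $M^*_t$ is unitarily equivalent to the backward unilateral weighted shift $B_{\lambda}$. Here $\ker B_{\lambda} = \langle e_{v_1} \rangle$ is one dimensional, and since $S_{\lambda}$ left-invertible forces $B^*_{\lambda}$ to be bounded below, $B_{\lambda}$ is onto with closed range; thus $B_{\lambda}$ is Fredholm of index $1$. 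Therefore $\mbox{ind}\, M^*_t = 1$, and $\mbox{ind}\, S_{\lambda} = \mbox{ind}\, \mathscr M_z + 1$, as claimed.

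I do not anticipate any serious obstacle. The only points needing care are the observation that a block upper triangular operator with a compact off-diagonal corner is a compact perturbation of its block diagonal part (so that the direct-sum formulas for the essential spectrum and the index apply), and the short computation identifying $M^*_t$ with $B_{\lambda}$ and recording that $\dim \ker B_{\lambda} = 1$ while $B_{\lambda}$ is surjective.
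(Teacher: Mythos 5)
Your proof is correct, and it follows exactly the route the authors intend: the paper states this corollary without proof, but its placement immediately after Theorem \ref{deco-rootless} makes clear that the intended argument is precisely yours, namely that the rank-one corner makes $S_{\lambda}$ a compact perturbation of $\mathscr M_z \oplus M^*_t$, so Weyl invariance of $\sigma_e$ and of the index reduce everything to the direct sum, and $\mathrm{ind}\, M^*_t = \mathrm{ind}\, B_{\lambda} = 1$ since $B_{\lambda}$ is surjective with one-dimensional kernel. Your supporting observations (that left-invertibility of $S_{\lambda}$ bounds the weights $\lambda_{v_j}$ below, hence $B^*_{\lambda}$ is bounded below and $B_{\lambda}$ is onto) are exactly what the proof of Lemma \ref{rootless} and Theorem \ref{deco-rootless} already supply.
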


%
%
%

\medskip \textit{Acknowledgment}. \
We express our sincere thanks to Jan Stochel and Zenon Jan Jab{\l}o\'nski for many helpful suggestions.
In particular, we acknowledge drawing our attention to the work \cite{C} on the spectral theory of composition operators. Further, the first author is thankful to the faculty and the administrative unit of School of Mathematics,
Harish-Chandra Research Institute, Allahabad for their warm hospitality during the preparation of this paper.

\end{document}